\documentclass[reqno]{amsart}
\usepackage{amscd}
\usepackage[dvips]{graphics}
\usepackage{verbatim}
\makeatletter
    \newcommand{\rmnum}[1]{\romannumeral #1}
    \newcommand{\Rmnum}[1]{\expandafter\@slowromancap\romannumeral #1@}
\makeatother

\def\beq{\begin{equation}}
\def\eeq{\end{equation}}
\def\ba{\begin{array}}
\def\ea{\end{array}}

\def\R{\mathbb R}

\def\cal{\mathcal}







\newtheorem{thm}{Theorem}[section]
\newtheorem{lm}[thm]{Lemma}
\newtheorem{prop}[thm]{Proposition}
\newtheorem{crl}[thm]{Corollary}

\theoremstyle{definition}
\newtheorem{rem}[thm]{Remark}

\theoremstyle{remark}
 
\numberwithin{equation}{section}

\begin{document}
\pagestyle{plain}

\title{Subcritical Approach to Sharp Hardy-Littlewood-Sobolev Type Inequalities on the upper half space}
\author
 {Jingbo Dou, Qianqiao Guo and Meijun Zhu}
\address{Jingbo Dou, School of Statistics, Xi'an University of Finance and
Economics,Xi'an, Shaanxi, 710100, China}

\address{Qianqiao Guo, Department of Applied Mathematics, Northwestern Polytechnical University, Xi'an, Shaanxi, 710129, China}

\address{ Meijun Zhu, Department of Mathematics,
The University of Oklahoma, Norman, OK 73019, USA}

\address{}

 \maketitle
\begin{abstract}
In this paper we establish  the reversed sharp Hardy-Littlewood-Sobolev (HLS for short) inequality on the upper half space and obtain a new  HLS type integral inequality  on the upper half space (extending an inequality found by Hang, Wang and Yan in \cite{HWY2008})  by introducing a uniform approach. The extremal functions are classified via the method of  moving spheres, and the best constants are computed. The new approach can also be applied to obtain the classical HLS inequality and other similar inequalities.

\end{abstract}

\section{Introduction}

In this paper, we shall continue our study of the  extension of the sharp Hardy-Littlewood-Sobolev inequality.
The classical sharp Hardy-Littlewood-Sobolev (HLS) inequality (\cite{HL1928, HL1930, So1963, Lieb1983}) states that
\begin{equation}\label{class-HLS}
|\int_{\Bbb{R}^n} \int_{\Bbb{R}^n} f(x)|x-y|^{-(n-\alpha)} g(y) dx dy|\le N(p,\alpha, n)||f||_p||g||_t
\end{equation}
for all $f\in L^p(\Bbb{R}^n), \, g\in L^t(\Bbb{R}^n),$  where $1<p, \, t<\infty$, $0<\alpha <n$ and
$
1/p +1 /t+ (n-\alpha)/n=2.$
Lieb \cite{Lieb1983} proved the existence of the extremal functions to the inequality with the sharp constant, classified the extremal functions and computed the best constant in the case of $t=p$ (the conformal case), or $p=2$, or $t=2$ (that can be reduced to conformal case).   The sharp HLS inequality will be called the Hardy-Littlewood-Sobolev-Lieb (HLSL) inequality.

HLSL inequality plays a crucial role in the development of global  analysis. For instance, HLSL inequality implies Moser-Trudinger-Onofri and Beckner inequalities \cite{B1993}, which are widely used in the study of uniformization theorem and  prescribing curvature problems;  HLSL inequality implies sharp Sobolev inequality, as well as Gross's logarithmic Sobolev inequality \cite{Gr} (see also Weissler \cite{We1978}), which are widely used in conformal geometry and  Ricci flow problem.

Recently, we are able to extend HLSL inequalities in two directions: the sharp inequality on the upper half space  and the reversed HLS inequality.

On the upper half space  $\R^n_+=\{x\in\R^n|x=(x_1,x_2,\cdots,x_n),x_n>0 \}$. Dou and Zhu \cite{DZ-2} showed that: for $0<\alpha<n$, $1<p,t<\infty, \frac{n-1}{n}\cdot\frac{1}{p}+\frac{1}{t}+\frac{n-\alpha+1}{n}=2$,  there is a constant $C(p,n,\alpha)$, such that,  for any  $f\in L^p(\partial \R^n_+), g\in L^t(\R^n_+)$,
\begin{equation}\label{ineq-HLS-DZ}
|\int_{\R^n_+}\int_{\partial \R^n_+}\frac{g(x)f(y)}{|x-y|^{n-\alpha}} dy dx| \le C(p,n,\alpha)\|f\|_{L^p(\partial \R^n_+)}\|g\|_{L^t(\R^n_+)}.
\end{equation}
Moreover, the extremal functions are classified and the best constant is computed in conformal case, that is,  $p=\frac{2(n-1)}{n+\alpha-2}$ and $t=\frac{2n}{n+\alpha}$.

On the other hand, if $\alpha>n$, the reversed HLS inequality was found  by Dou and Zhu in  \cite{DZ-1}: for $\alpha>n$, $0<p,t<1, \frac{1}{p}+\frac{1}{t}+\frac{n-\alpha}{n}=2$, there is a constant  $C(p,n,\alpha)>0$, such that, for nonnegative functions $f\in L^p(\R^n), g\in L^t(\R^n)$,
\begin{equation}\label{ineq-RHLS}
\int_{\R^n}\int_{\R^n}\frac{g(x)f(y)}{|x-y|^{n-\alpha}} dy dx \ge C(p,n,\alpha)\|f\|_{L^p(\R^n)}\|g\|_{L^t(\R^n)}.
\end{equation}
 Again, in conformal case, that is,  $t=p=\frac{2n}{n+\alpha}$, the extremal functions are also classified and the best constant is computed in \cite{DZ-1}.

In this paper, we shall introduce a uniform approach to deal with these inequalities in conformal cases.  Note in conformal cases, the corresponding extremal functions can always  be classified and  the best constant can be explicitly computed.

Throughout the paper, we always use $B^n$ to represent the ball in $\R^n$ with radius 1 and centered at $(0, ..., 0, -1)$. The main idea of this approach is to study the corresponding sharp inequalities with subcritical exponents either on the  ball  $B^n$ or on the sphere $\partial B^n$, and show that the extremal functions in the cases are always constant functions, thus the best constants can be easily computed.  We then pass to the limit to obtain the expected sharp inequalities with critical exponents.

We first use this approach to establish the sharp reversed HLS inequality on the upper half space in conformal case.
\begin{thm}\label{HLS-UHS}
Assume that $\alpha>n$, $p=\frac{2(n-1)}{n+\alpha-2}, t=\frac{2n}{n+\alpha}$. Then for all  nonnegative $f\in L^p(\partial \R^n_+), g\in L^t(\R^n_+):$
\begin{equation}\label{ineq-HLS-UHS}
\int_{\R^n_+}\int_{\partial \R^n_+}\frac{g(x)f(y)}{|x-y|^{n-\alpha}} dy dx\ge C_{e_1}(n,\alpha)\|f\|_{L^p(\partial \R^n_+)}\|g\|_{L^t(\R^n_+)},
\end{equation}
where $$C_{e_1}(n,\alpha)=(n\omega_n)^{-\frac{n+\alpha-2}{2(n-1)}}\big(\int_{B^n}\big(\int_{\partial B^n}\frac{1}{|\xi-\zeta|^{n-\alpha}}d\zeta\big)^{\frac{2n}{n-\alpha}}d\xi\big)^{\frac{n-\alpha}{2n}}.$$
And the equality holds if and only if
$$f(y)=c \big(\frac 1{|y-\overline{y}^0|^2+d^2}\big)^{\frac{n+\alpha-2}2},
$$
for some $c>0, \  \overline{y}^0 \in \partial \R^n_+.$
\end{thm}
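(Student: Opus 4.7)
The plan is to implement the subcritical approach announced in the introduction. The first step is a conformal change of variables sending $\R^n_+$ to $B^n$ and $\partial\R^n_+$ to $\partial B^n$, via an inversion centered at a point of $\partial B^n$ that lies in the hyperplane $\{x_n=0\}$. The critical exponents $p=2(n-1)/(n+\alpha-2)$ and $t=2n/(n+\alpha)$ are chosen so that the conformal Jacobians are absorbed into the $L^p(\partial\R^n_+)$ and $L^t(\R^n_+)$ norms, which means \eqref{ineq-HLS-UHS} is equivalent to the reversed integral inequality
\[
\int_{B^n}\int_{\partial B^n}\frac{G(\xi)F(\zeta)}{|\xi-\zeta|^{n-\alpha}}\,d\zeta\,d\xi\;\ge\;C\,\|F\|_{L^p(\partial B^n)}\|G\|_{L^t(B^n)}
\]
on the compact pair, with the same sharp constant. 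This reduction exchanges the non-compactness at the critical exponents of $\R^n_+$ for the full rotational symmetry of $B^n$ and a bounded and continuous kernel $|\xi-\zeta|^{\alpha-n}$.

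Next, for small $\epsilon>0$ I would consider the subcritical companion problem with exponents $(p_\epsilon,t_\epsilon)$ displaced from $(p,t)$ in the direction that breaks the conformal scaling. Since $\alpha>n$, the kernel is bounded and continuous on the compact product $B^n\times\partial B^n$, so the reversed Rayleigh quotient is minimized by a pair $(F_\epsilon,G_\epsilon)$ of nonnegative functions obtained by the direct method. The structural heart of the proof is to show that these minimizers are constants: I would analyze the Euler--Lagrange integral system coupling $F$ on $\partial B^n$ and $G$ on $B^n$, and run a moving-spheres argument with spheres centered on $\partial B^n$. Subcriticality produces a sign-definite defect that lets the spheres be driven through all scales, forcing rotational symmetry of the minimizers around every boundary point, hence constancy. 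Once $F_\epsilon\equiv a_\epsilon$ and $G_\epsilon\equiv b_\epsilon$, the infimum $C_\epsilon$ reduces to an explicit ratio of integrals of the kernel and of the measures $|B^n|$ and $|\partial B^n|=n\omega_n$.

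Because both $B^n$ and $\partial B^n$ have finite measure, a H\"older interpolation on the quasi-norms transfers the subcritical inequality to the critical pair $(p,t)$ as $\epsilon\to 0^+$, and the explicit formula for $C_\epsilon$ passes to
\[
(n\omega_n)^{-\tfrac{n+\alpha-2}{2(n-1)}}\Bigl(\int_{B^n}\bigl(\int_{\partial B^n}|\xi-\zeta|^{\alpha-n}d\zeta\bigr)^{\tfrac{2n}{n-\alpha}}d\xi\Bigr)^{\tfrac{n-\alpha}{2n}}=C_{e_1}(n,\alpha).
\]
Pulling back through the conformal map, the constant $F\equiv c'$ on $\partial B^n$ becomes a function proportional to $(|y-\bar y^0|^2+d^2)^{-(n+\alpha-2)/2}$ on $\partial\R^n_+$, the exponent $(n+\alpha-2)/2$ being exactly the conformal weight for $L^p(\partial\R^n_+)$. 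Postcomposing with translations and dilations of $\R^n_+$ — which preserve both sides of the inequality — sweeps out the full $(\bar y^0,d)$-parameter family, recovering the extremals claimed in the theorem.

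The principal obstacle is the constancy step. The reversed direction of the inequality flips signs relative to the standard moving-spheres arguments used for classical HLS, and the hybrid sphere/ball geometry rules out importing a ready-made symmetrization from Hang--Wang--Yan or from the pure-sphere case. One has to set up the moving spheres so that the monotonicity used to push them to the limit position survives the reversal, and one has to coordinate the two Euler--Lagrange equations (one on $B^n$, one on $\partial B^n$) through the common sphere. Once constancy is secured, the remaining steps — compactness for the subcritical problem, evaluation at constants, H\"older passage to the critical limit, and identification of extremals through the conformal map — are routine applications of the reduction set up in the first paragraph.
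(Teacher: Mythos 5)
Your outline correctly captures the paper's strategy: conformally transplant the inequality to $B^n$/$\partial B^n$, prove a sharp subcritical version there by showing minimizers of the reversed Rayleigh quotient are constants, then pass to the critical exponents. The symmetry step you describe via "moving spheres centered on $\partial B^n$" is carried out in the paper by pulling the subcritical Euler--Lagrange system back to $\R^n_+$ via the inversion $T_{x^o}$ and running the method of moving planes; after the pullback the subcritical exponents manifest precisely as the weight factors $|y-x^o|^{s_1}$, $|x-x^o|^{s_2}$ with $s_1,s_2>0$, which are the "sign-definite defect" you anticipated.

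There are, however, two places where the proposal glosses over steps that are genuinely nontrivial and where the paper has to do real work. First, existence of subcritical extremals: because $p,t\in(0,1)$ in the reversed setting, $L^p$ and $L^t$ are not normed spaces, and a minimizing sequence normalized only by $\|f_i\|_{L^p}=1$ has no usable weak compactness — the "direct method" as you invoke it does not apply. The paper gets around this by minimizing over the constrained set $\{f\ge 0:\;\|f\|_{L^p}=1,\;\|f\|_{L^2}\le b_2\}$ (Proposition~\ref{thm-exist-UHS-sub}); the $L^2$ bound supplies weak $L^2$ (hence weak $L^1$) compactness, a Fatou-type lower semicontinuity for $E_\alpha$, and uniform equicontinuity of $E_\alpha f_i$ on $\overline{B^n}$, and the constraint is then removed a posteriori by truncation $f_A=\min(f,A)$ and monotone convergence. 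Without some surrogate for this constraint, the subcritical compactness step would fail. Second, the classification of \emph{all} extremals at the critical exponent does not follow merely from pulling back constants, translating, and dilating: that shows the displayed family consists of extremals, but not that there are no others. The paper closes this gap by writing down the critical Euler--Lagrange system \eqref{equ-UHS-cri} on $\R^n_+$ and running a separate moving-spheres argument at the critical exponents (following Li and Dou--Zhu), which uses conformal invariance in an essential way that the subcritical argument, by design, lacks. Your proposal should flag both of these rather than filing them under "routine."
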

We indicated in \cite{DZ-1} that the above inequality holds. Late it was also proved by Ng\^{o} and  Nguyen \cite {NN-1} via  a different method, while they classified the extremal functions along the line of early work of Li \cite{Li} and Dou and Zhu \cite{DZ-1}.

We shall also  use the approach to establish  following new sharp  HLS type integral inequality.
\begin{thm}\label{HLST-UHS}
Assume $n\ge3, 2\le \alpha<n$, $p=\frac{2(n-1)}{n+\alpha-4}, t=\frac{2n}{n+\alpha}$, and write $x=(x^\prime,x_n)\in\R^n_+$.   Then the following sharp inequality holds for all nonnegative $f\in L^p(\partial \R^n_+), g\in L^t(\R^n_+):$
\begin{equation}\label{ineq-HLST-UHS}
\int_{\R^n_+}\int_{\partial \R^n_+}\frac{x_n g(x)f(y)}{(|x^\prime-y|^2+x_n^2)^{\frac{n-\alpha+2}{2}}} dy dx\le \frac{C_{e_2}(n,\alpha)}{2}\|f\|_{L^p(\partial \R^n_+)}\|g\|_{L^t(\R^n_+)},
\end{equation}
where
\begin{equation}\label{9-19-2}
C_{e_2}(n,\alpha)=(n\omega_n)^{-\frac{n+\alpha-4}{2(n-1)}}\big(\int_{B^n}\big(\int_{\partial B^n}\frac{(1-|\xi-z^o|^2)}{|\xi-\zeta|^{n-\alpha+2}}d\zeta\big)^{\frac{2n}{n-\alpha}}d\xi\big)^{\frac{n-\alpha}{2n}},
\end{equation}
and $z^o=(0,-1)\in \R^{n-1}\times\R$. And the equality holds if and only if
$$f(y)=c \big(\frac 1{|y-\overline{y}^0|^2+d^2}\big)^{\frac{n+\alpha-4}2},
$$
for some $c>0, \  \overline{y}^0 \in \partial \R^n_+.$
\end{thm}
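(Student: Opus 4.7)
The approach I follow is the uniform subcritical strategy of the introduction: first transport the inequality to an equivalent weighted inequality on $B^n$ via a conformal Möbius map; second, reduce to a linear operator bound and solve the extremal problem in a subcritical regime where full rotational symmetry about the center $z^o$ of $B^n$ forces extremals to be constants; third, pass to the critical limit and classify extremals on $\R^n_+$ by the method of moving spheres.

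\emph{Conformal reduction.} Take $\Phi\colon\overline{B^n}\to\overline{\R^n_+}$ to be the inversion centered at $(0,\ldots,0,-2)$ with radius $2$. A direct computation gives the chord identity $|\Phi(\xi)-\Phi(\zeta)|=|\xi-\zeta|\,J_\Phi(\xi)^{1/(2n)}J_\Phi(\zeta)^{1/(2n)}$ and, decisively for this theorem, the height formula $(\Phi(\xi))_n=\tfrac12(1-|\xi-z^o|^2)J_\Phi(\xi)^{1/n}$, where $J_\Phi$ is the conformal Jacobian. Setting
$$\tilde f(\zeta)=J_\Phi(\zeta)^{(n-1)/(np)}f(\Phi(\zeta)),\qquad \tilde g(\xi)=J_\Phi(\xi)^{1/t}g(\Phi(\xi)),$$
one has $\|\tilde f\|_{L^p(\partial B^n)}=\|f\|_{L^p(\partial\R^n_+)}$ and $\|\tilde g\|_{L^t(B^n)}=\|g\|_{L^t(\R^n_+)}$. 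The conformal identity $(n-1)/p+n/t=n+\alpha-2$, easily verified from the given exponents, makes every Jacobian power in the double integral cancel, and \eqref{ineq-HLST-UHS} becomes equivalent to
$$\tilde F(\tilde f,\tilde g):=\int_{B^n}\!\int_{\partial B^n}\!\frac{(1-|\xi-z^o|^2)\tilde g(\xi)\tilde f(\zeta)}{|\xi-\zeta|^{n-\alpha+2}}\,d\zeta\,d\xi\le C_{e_2}(n,\alpha)\|\tilde f\|_{L^p(\partial B^n)}\|\tilde g\|_{L^t(B^n)},$$
the factor $1/2$ in the statement being precisely the one inherited from the height formula.

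\emph{Subcritical problem on $B^n$.} By Hölder, $\tilde F(\tilde f,\tilde g)\le\|T\tilde f\|_{L^{t'}(B^n)}\|\tilde g\|_{L^t(B^n)}$, where $T\tilde f(\xi):=(1-|\xi-z^o|^2)\int_{\partial B^n}|\xi-\zeta|^{-(n-\alpha+2)}\tilde f(\zeta)\,d\zeta$ and $t'=2n/(n-\alpha)$, so it suffices to prove the linear bound $\|T\tilde f\|_{L^{t'}(B^n)}\le C_{e_2}\|\tilde f\|_{L^p(\partial B^n)}$. For small $\epsilon>0$, fix subcritical exponents $(p_\epsilon,q_\epsilon)$ so that $T\colon L^{p_\epsilon}(\partial B^n)\to L^{q_\epsilon}(B^n)$ is compact; the sharp constant $C_\epsilon$ is then attained by a nonnegative extremal $\tilde f_\epsilon$. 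Because $z^o$ is the \emph{center} of $B^n$, rotations about $z^o$ preserve $B^n$, $\partial B^n$, the weight $(1-|\xi-z^o|^2)$, and the kernel $|\xi-\zeta|^{-(n-\alpha+2)}$, so $T$ intertwines with them; combined with uniqueness of the subcritical extremal up to positive scalar multiples, this forces $\tilde f_\epsilon$ to be rotation-invariant, and since $\partial B^n$ is a single orbit of rotations about $z^o$, $\tilde f_\epsilon$ must be constant. I expect the rigorous justification of this uniqueness to be the main obstacle, to be handled either by strict convexity of the associated Lagrangian or by a Riesz-rearrangement argument on $\partial B^n$ with $z^o$ as distinguished center. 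Plugging $\tilde f_\epsilon\equiv 1$ into the linear inequality then gives $C_\epsilon=(n\omega_n)^{-1/p_\epsilon}\|T1\|_{L^{q_\epsilon}(B^n)}$ in closed form.

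\emph{Critical limit and extremals.} Letting $\epsilon\to 0$, dominated convergence together with continuity of the $L^q$ norm in $q$ yields $C_\epsilon\to(n\omega_n)^{-1/p}\|T1\|_{L^{t'}(B^n)}$, which a direct comparison identifies with the constant $C_{e_2}(n,\alpha)$ of \eqref{9-19-2}. Lower semicontinuity then extends the linear inequality to all admissible $\tilde f$ at the critical exponent, and pulling back along $\Phi$ delivers \eqref{ineq-HLST-UHS} with constant $C_{e_2}(n,\alpha)/2$. For the classification of equality cases, Hölder equality in the duality step forces $\tilde g\propto(T\tilde f)^{t'-1}$, while the Euler-Lagrange equation for the linear extremal problem transports under $\Phi$ to
$$\int_{\R^n_+}\frac{x_n\,g(x)}{(|x'-y|^2+x_n^2)^{(n-\alpha+2)/2}}\,dx=c\,f(y)^{p-1},\qquad y\in\partial\R^n_+.$$
Applying the method of moving spheres to this integral equation, in the spirit of \cite{Li,DZ-2,DZ-1}, then yields $f(y)=c\,(|y-\overline{y}^0|^2+d^2)^{-(n+\alpha-4)/2}$ for some $c>0$ and $\overline{y}^0\in\partial\R^n_+$, completing the classification.
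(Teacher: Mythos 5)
Your conformal reduction to $B^n$ is correct and matches the paper: your height formula $(\Phi(\xi))_n=\tfrac12(1-|\xi-z^o|^2)J_\Phi(\xi)^{1/n}$ and the exponent identity $(n-1)/p+n/t=n+\alpha-2$ both check out, and the reduction to the linear bound $\|T\tilde f\|_{L^{t'}(B^n)}\le C_{e_2}\|\tilde f\|_{L^p(\partial B^n)}$ is exactly the paper's $\|P_\alpha f\|_{L^{t'}(B^n)}\le C\|f\|_{L^p(\partial B^n)}$. The passage from subcritical to critical and the classification of extremals by moving spheres at the critical exponent also follow the paper's plan.

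The gap is in the step you flag yourself: you deduce rotation invariance of the subcritical extremal from ``uniqueness of the subcritical extremal up to positive scalar multiples,'' and you describe establishing this uniqueness as ``the main obstacle.'' That obstacle \emph{is} the theorem, and neither of the two mechanisms you propose for closing it would work here. Strict convexity fails: the Euler--Lagrange equation has the form $f^{p-1}(z)=\int_{B^n}(1-|\xi-z^o|^2)(P_\alpha f(\xi))^{t'-1}|\xi-z|^{-(n-\alpha+2)}\,d\xi$ with $p-1<1$ (indeed $p>2$ for the admissible range), so the associated functional is not convex and there is no a priori reason for a unique critical point — precisely the same obstruction that makes the Yamabe-type problems hard. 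A Riesz-rearrangement argument on $\partial B^n$ centered at $z^o$ is also not readily available: the bilinear form couples a function on $\partial B^n$ with one on $B^n$ through a kernel on $B^n\times\partial B^n$, and a rearrangement inequality for this mixed boundary--interior integral on a ball is not among the standard spherical rearrangement results (Baernstein--Taylor and its variants are for $S^{n-1}\times S^{n-1}$). So the symmetry of the subcritical extremal cannot be obtained by the route you sketch. The paper avoids the uniqueness question entirely: it derives the Euler--Lagrange equation, transports it to an integral system on $\R^n_+$ via a suitable conformal map (giving the system \eqref{equ2-HLST-sub} with weights $|(y,2)|^{-s_1}$ and $|(x',x_n+2)|^{-s_2}$), and then runs the method of moving planes (Lemmas \ref{lm-UHST-mmp-start} and \ref{lm-UHST-mmp-infy}, using the subcritical weighted Young estimates \eqref{IYWT-UHST-1}--\eqref{IYWT-UHST-2} as the integral-comparison engine) to prove symmetry of \emph{every} positive solution about the $x_n$-axis; varying the conformal map then pins the extremal on $\partial B^n$ down to a constant without ever invoking uniqueness. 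To repair your proof you would need to supply this moving-planes argument, or an equivalent symmetry device, in place of the uniqueness claim.
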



Let
\begin{equation}\label{K-1}
K_{1, \alpha} (x, y)=\frac 1{|x-y|^{n-\alpha}}.
\end{equation}
HLS type inequalities essentially are the $L^p$ estimates for the convolution operators with this kernel,
either in the whole space (the classical HLS inequality), or on the upper half space (Dou and Zhu's generalization of HLS inequality on the upper half space \cite{DZ-2}).  Note that this kernel, up to a constant, can be viewed as the fundamental solution of $(-\Delta)^{\alpha/2} $ operator.

On the other hand,  consider
\begin{equation}\label{K-1}
K_{2, \alpha} (x, y):=-\frac1 {n-\alpha}\cdot \frac{\partial}{\partial x_n}K_{1. \alpha}=\frac {x_n}{|x-y|^{n+2-\alpha}}
\end{equation}
for $x\in \R^n_+,y\in \partial \R^n_+, 2\le\alpha<n$. This function, up to a constant, can be viewed as the fundamental solution of $(-\Delta)^{\alpha/2} $ operator on the upper half space. In fact, for $\alpha=2$, this is the classical Poisson kernel (up to a certain constant multiplier). From this view point, it is quite natural to expect that inequality \eqref{ineq-HLST-UHS}  shall hold.

In fact, for  $\alpha=2$, \eqref{ineq-HLST-UHS} was first obtained by Hang, Wang and Yan \cite{HWY2008}; and it  can be seen as the higher dimensional version of Carleman's inequality and is closely related to the sharp isoperimetric inequality. Hang, Wang and Yan's inequality  was also extended by Chen \cite{Chen2014} with different kernel function (for viewing fractional Lapalacian operator on the boundary of the upper half space as a special map from Dirichlet data to Neumann data). Chen's approach is along the same line as that in \cite{HWY2008}.


It is worth pointing out that the approach we introduced here can be used to give another proof  of  the classical HLSL inequality,  as well as that of reversed sharp HLS inequality in conformal case. In these conformal cases, there always are equivalent sharp inequalities either on unit sphere/ball, or on the upper half unit sphere/ball. These sharp inequalities usually play essential role in the study of geometric elliptic equations or integral equations involving critical exponents, such as  the sharp Sobolev inequality in study of  Yamabe problem, the sharp logarithmic Sobolev inequality in the study of Ricci flow.

We arrange the paper as follows. In Section \ref{Section 2},  we prove  Theorem \ref{HLS-UHS}. In fact, we prove its  equivalent form on a ball (Theorem \ref{HLS-ball} below). A reversed Young inequality is derived, which yields the inequality with  subcritical power (that is: $0<p<\frac{2(n-1)}{n+\alpha-2}, 0<t<\frac{2n}{n+\alpha}$). We then establish the existence of extremal functions under an extra  assumption on the boundedness of  $L^2$ norm. Then we classify the extremal functions via the method of moving planes, and compute the best constant.  Passing to the limit, we obtain  the sharp inequality for $p=\frac{2(n-1)}{n+\alpha-2}$ and $ t=\frac{2n}{n+\alpha}$. The extremal functions for the sharp inequality in critical case can be classified via the method of moving spheres (following from similar argument in Li \cite{Li} and Dou and Zhu \cite{DZ-2}). In Section \ref{Section 3}, we prove Theorem \ref{HLST-UHS} by similar procedures as in Section \ref{Section 2}.

Notation: throughout  the whole paper, we will write  $x^o=(0,-2), z^o=(0,-1)\in \R^{n-1}\times\R$ and $ B^n:=B_1(z^o)$ being the unit ball centered at $z^o$.


{\bf Acknowledgment}
This work is partially supported by the National Natural Science Foundation of China (Grant No.11571268). Q. Guo is also partially supported by the Fundamental Research Funds for the Central Universities (Grant No. 3102015ZY069) and the Natural Science Basic Research Plan in Shaanxi Province of China (Grant No. 2016JM1008). Zhu is partially supported by Simons foundation;  Q. Guo also would like to thank Department of Mathematics in  the  University of Oklahoma  for its hospitality,  where part of this work has been done.

\section{Reversed Sharp HLS Inequality on the Upper Half Space}\label{Section 2}
In this section we shall establish the  sharp reversed HLS inequality on the upper half space.

Consider  conformal  transformation $T_{x^o}:   x\in B^n  \to x^{x^o} \in \R^n_+$  given by
\begin{equation}\label{confor-map}
x^{x^o}=\frac{2^2(x-x^o)}{|x-x^o|^2}+x^o \in  \R^n_+.
\end{equation}
Via this conformal transformation,  one can easily show  that  inequality \eqref{ineq-HLS-UHS} is equivalent to the following sharp reversed HLS inequality on  the ball $B^n$ (see, e.g. Dou and Zhu \cite{DZ-2}):
\begin{thm}\label{HLS-ball}
For $\alpha>n$, $p=\frac{2(n-1)}{n+\alpha-2}, t=\frac{2n}{n+\alpha}$, the following sharp inequality holds for all  nonnegative $f\in L^p(\partial B^n), g\in L^t(B^n):$
\begin{equation}\label{ineq-HLS-ball}
\int_{B^n}\int_{\partial B^n}\frac{g(x)f(y)}{|x-y|^{n-\alpha}} dy dx\ge C_{e_1}(n,\alpha)\|f\|_{L^p(\partial B^n)}\|g\|_{L^t(B^n)},
\end{equation}
where $C_{e_1}(n,\alpha)$ is the sharp constant appearing in Theorem \ref{HLS-UHS}.
\end{thm}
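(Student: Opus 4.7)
The plan is to show that \eqref{ineq-HLS-ball} is simply \eqref{ineq-HLS-UHS} transported by the conformal map $T_{x^o}$, so the two statements are literally equivalent with the same sharp constant; assuming Theorem \ref{HLS-UHS}, Theorem \ref{HLS-ball} follows, and the argument is reversible, which is in fact how the paper intends to use the equivalence in order to reduce the upper half space inequality to a subcritical proof on the ball.

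First, I would record the geometry. A brief computation shows that $T_{x^o}$ is a bijection from $B^n\setminus\{x^o\}$ onto $\R^n_+$ carrying $\partial B^n\setminus\{x^o\}$ onto $\partial\R^n_+$; being the inversion of radius $2$ at $x^o$, its conformal factor is $4/|x-x^o|^2$, yielding the standard Jacobian formulas
\[
d\xi = \Bigl(\tfrac{4}{|x-x^o|^2}\Bigr)^{n} dx, \qquad d\zeta = \Bigl(\tfrac{4}{|y-x^o|^2}\Bigr)^{n-1} dy,
\]
for $x\in B^n$ and $y\in \partial B^n$, together with the chord identity $|x^{x^o}-y^{x^o}| = 4|x-y|/(|x-x^o|\,|y-x^o|)$.

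Next, for nonnegative $f \in L^p(\partial \R^n_+)$ and $g \in L^t(\R^n_+)$, I introduce the transported functions
\[
\tilde f(y) = f(y^{x^o})\,|y-x^o|^{-(n+\alpha-2)}, \qquad \tilde g(x) = g(x^{x^o})\,|x-x^o|^{-(n+\alpha)}.
\]
The conformal exponents $p = 2(n-1)/(n+\alpha-2)$ and $t = 2n/(n+\alpha)$ are chosen precisely so that, when one pulls back the $L^p$ and $L^t$ norms through $T_{x^o}$, the singular weights of $\tilde f$, $\tilde g$ exactly cancel the Jacobians and leave only constant powers of $4$. A direct verification gives $\|\tilde f\|_{L^p(\partial B^n)} = 4^{-(n-1)/p}\,\|f\|_{L^p(\partial \R^n_+)}$ and $\|\tilde g\|_{L^t(B^n)} = 4^{-n/t}\,\|g\|_{L^t(\R^n_+)}$. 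Substituting into the left of \eqref{ineq-HLS-UHS} and invoking the chord formula, all weights of $|x-x^o|$ and $|y-x^o|$ again cancel, producing
\[
\int_{\R^n_+}\int_{\partial \R^n_+}\frac{g(\xi)f(\zeta)}{|\xi-\zeta|^{n-\alpha}}\,d\zeta\,d\xi \;=\; 4^{n+\alpha-1}\int_{B^n}\int_{\partial B^n}\frac{\tilde g(x)\tilde f(y)}{|x-y|^{n-\alpha}}\,dy\,dx.
\]
Since $(n-1)/p + n/t = n+\alpha-1$, the factor $4^{n+\alpha-1}$ appears identically on both sides of the inequality, so the constant $C_{e_1}(n,\alpha)$ transports unchanged.

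The only subtle point is bookkeeping: one must verify that the Jacobian exponents, the kernel exponent, and the $L^p$/$L^t$ exponents all conspire to cancel simultaneously, a coincidence specific to the conformal case and the sole reason why the two inequalities are literally identical rather than merely comparable. Because $f\leftrightarrow \tilde f$ and $g\leftrightarrow \tilde g$ are bijections between the cones of nonnegative $L^p$/$L^t$ functions, extremal pairs correspond to extremal pairs, and the Poisson-type extremals on $\partial \R^n_+$ asserted in Theorem \ref{HLS-UHS} correspond under the pullback to constant functions on $\partial B^n$, providing the convenient target the paper will hit by its subcritical scheme on the ball.
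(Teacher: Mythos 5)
Your conformal bookkeeping is correct: the conformal factor $4/|x-x^o|^2$, the volume and surface Jacobians, the chord identity, the weighted substitutions $\tilde f,\tilde g$, the norm identities $\|\tilde f\|_{L^p(\partial B^n)}=4^{-(n-1)/p}\|f\|_{L^p(\partial\R^n_+)}$ and $\|\tilde g\|_{L^t(B^n)}=4^{-n/t}\|g\|_{L^t(\R^n_+)}$, the conversion of the double integral with the prefactor $4^{n+\alpha-1}$, and the exponent check $(n-1)/p+n/t=n+\alpha-1$ are all right. This is exactly the ``via this conformal transformation, one can easily show'' sentence with which the paper introduces Theorem \ref{HLS-ball}, and it is indeed the step by which the paper later transfers the ball result to $\R^n_+$.

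But as a proof of Theorem \ref{HLS-ball} this is circular in the paper's own logical order. Theorem \ref{HLS-UHS} is not an external input: it is one of the two main results the paper sets out to prove, and in the paper's structure it is derived \emph{from} Theorem \ref{HLS-ball}, whose genuine proof occupies all of Section \ref{Section 2}: the reversed Young inequality on the ball (Lemma \ref{RY-ball}), the subcritical reversed HLS inequality (Corollary \ref{RYW-ball}), existence of a subcritical extremizer under an auxiliary $L^2$ constraint (Proposition \ref{thm-exist-UHS-sub}), the Euler--Lagrange system \eqref{equ2-UHS-sub}, symmetry of its positive solutions by moving planes (Proposition \ref{thm-sym-UHS-sub}), the deduction that subcritical extremizers are constants with the explicit constant \eqref{9-19-1} (Proposition \ref{thm-sol-UHS-sub}), passage of the subcritical exponents to the critical ones, and finally the moving-sphere classification of the critical extremizers. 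None of this is present in your proposal, so the substance of Theorem \ref{HLS-ball} is simply not addressed. You are aware of the direction issue (``the argument is reversible, which is in fact how the paper intends to use the equivalence''), but then you still proceed in the direction that begs the question. What is needed is a proof of the ball inequality from scratch, which is precisely what the subcritical scheme supplies; the conformal dictionary you have written out is the final translation step, not the proof.

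A small further point: your last sentence (that the Poisson-type extremals on $\partial\R^n_+$ pull back to constants on $\partial B^n$) is accurate, and is a useful consistency check; but in the paper it runs in the opposite direction, from the fact that constants are extremal on $\partial B^n$ (shown by moving planes in the subcritical regime and then by passage to the limit) to the bubble profile on $\partial\R^n_+$.
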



\subsection{A Reversed Young Inequality}

We first establish a reversed Young inequality on a ball, which yields a reversed HLS inequality with subcritical powers.

For $x\in B^n\setminus \{z^o\}$, denote $x^*=z^o+\frac{x-z^o}{|x-z^o|}\in \partial B^n$ and $(z^o)^*=0$.



\begin{lm}\label{RY-ball}
Let $p,t\in (0,1)$ and $\frac{1}{p}+\frac{1}{p^\prime}=1, \frac{1}{t}+\frac{1}{t^\prime}=1$. Let $h(x, y)$ and $h_1(x,y)$ be two nonnegative functions defined on $ B^n \times \partial B^n$ and $\partial B^n \times \partial B^n$ respectively, satisfying  $h(x,y)\ge h_1(x^*,y), \forall~ x\in B^n, y\in \partial B^n$, $ h(\cdot, x^o)\in L^{at^\prime}(B^n), h_1(0, \cdot)\in L^{(1-a)p^\prime}(\partial B^n)$ for some $a\in (0, 1)$, and  $\|h(\cdot,y^1)\|_{L^{at^\prime}( B^n)}=\|h(\cdot,y^2)\|_{L^{at^\prime}( B^n)}, ~\forall~ y^1,y^2\in \partial B^n$, $\|h_1(z^1,\cdot)\|_{L^{(1-a)p^\prime}(\partial B^n)}=\|h_1(z^2,\cdot)\|_{L^{(1-a)p^\prime}(\partial B^n)}, ~\forall~ z^1,z^2\in \partial B^n$. Then  for all nonnegative functions $f\in L^p(\partial B^n), g\in L^t(B^n),$
\begin{equation}\label{equ-RY-ball}
\int_{B^n}\int_{\partial B^n}g(x)f(y)h(x,y)dy dx\ge\|f\|_{L^p(\partial B^n)}\|g\|_{L^t(B^n)}\|h(\cdot, x^o)\|_{L^{at^\prime}(B^n)}\|h_1(0,\cdot)\|_{L^{(1-a)p^\prime}(\partial B^n)}.
\end{equation}
\end{lm}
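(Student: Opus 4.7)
The plan is to derive the inequality by a reversed H\"older argument on the product space $B^n\times\partial B^n$, with the two symmetry hypotheses on $h$ and $h_1$ making the joint integrals collapse into products of single-variable norms. The first step is to replace $h$ by its $h_1$-bound in one piece: splitting $h=h^a\cdot h^{1-a}$ and using $1-a>0$ together with the pointwise inequality $h(x,y)\ge h_1(x^*,y)$ gives
\[
g(x)f(y)h(x,y)\ \ge\ g(x)f(y)\,h(x,y)^a\,h_1(x^*,y)^{1-a},
\]
so it suffices to bound $\int_{B^n}\int_{\partial B^n}gfh^ah_1^{1-a}\,dy\,dx$ from below.

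Next, I would decompose this integrand as a H\"older product adapted to the four target norms,
\[
gfh^ah_1^{1-a}
=\bigl[g(x)^t h(x,y)^{at'}\bigr]^{1/t}\,\bigl[f(y)^p h_1(x^*,y)^{(1-a)p'}\bigr]^{1/p}\,R(x,y),
\]
and apply the form of reversed H\"older suited to $p,t\in(0,1)$ (so $p',t'<0$). Via Fubini and the two independence hypotheses, the integral of each bracketed term on $B^n\times\partial B^n$ splits as an explicit volume factor times the product of the single-variable $L^p$ or $L^t$ norm of $g$ (resp. $f$) and the corresponding norm of $h(\cdot,x^o)$ (resp. $h_1(0,\cdot)$). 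A parallel computation for the residual $R$---invoking one further pointwise use of $h\ge h_1\circ(\cdot^*,\cdot)$---produces the matching volume-factor denominators.

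Finally, the conjugate-exponent identities $1/t+1/t'=1$ and $1/p+1/p'=1$ force the powers of $|B^n|$ and $|\partial B^n|$ to cancel exactly across the three pieces, yielding \eqref{equ-RY-ball}. The main obstacle is the sign and exponent bookkeeping in the reversed H\"older: with two target exponents in $(0,1)$ and their conjugates negative, the condition $\sum 1/s_i=1$ is not satisfied by $t,p,t',p'$ alone (their reciprocals sum to $2$), so one must introduce an auxiliary exponent---either as a constant factor with reciprocal exponent $-1$, or absorbed through the residual $R$---and arrange for the resulting volume factors to clear. Verifying the signs of the conjugates throughout and tracking the residual correctly is the step where errors are most likely.
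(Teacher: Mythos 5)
Your high-level plan---factor the integrand, apply reversed H\"older on $B^n\times\partial B^n$, and use the two constancy hypotheses via Fubini---is the same idea the paper uses, and replacing $h^{1-a}$ by $h_1^{1-a}$ pointwise at the start is a harmless rearrangement. But the specific decomposition you propose has two structural problems that are more than bookkeeping.

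First, the pairing is wrong. You pair $g$ with $h$ (inside $[g^t h^{at'}]^{1/t}$) and $f$ with $h_1$. The hypothesis on $h$ is that $\|h(\cdot,y)\|_{L^{at'}(B^n)}$ is \emph{constant in $y\in\partial B^n$}, so Fubini factors $\int_{B^n}\int_{\partial B^n} f^p(y)\,h^{at'}(x,y)\,dy\,dx$ cleanly into $\|f\|_p^p\,\|h(\cdot,x^o)\|_{at'}^{at'}$; it does not factor $\int\int g^t(x)\,h^{at'}(x,y)\,dy\,dx$, because there the inner $\partial B^n$-integral of $h^{at'}$ is taken in the \emph{wrong} variable and nothing in the hypotheses says that quantity is constant in $x$. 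Symmetrically, the $h_1$-hypothesis is set up to pair with $g$, not with $f$. The paper's factorization $g f h = \gamma_1\gamma_2\gamma_3$ with $\gamma_1 = g^{t/p'} h^{1-a}$, $\gamma_2 = f^{p/t'} h^a$, $\gamma_3 = g^{t/q'} f^{p/q'}$ builds in the correct pairing: after taking powers $p'$, $t'$, $q'$, the piece $\gamma_2^{t'}=f^p h^{at'}$ integrates against the $\|h(\cdot,y)\|_{at'}$ constancy and $\gamma_1^{p'}=g^t h^{(1-a)p'}\ge g^t h_1^{(1-a)p'}(x^*,y)$ integrates against the $\|h_1(z,\cdot)\|_{(1-a)p'}$ constancy.

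Second, the exponent structure you propose does not give a lower bound. Reading off the outer powers $1/t$ and $1/p$ suggests applying a three-factor H\"older with exponents $t,p\in(0,1)$ and an auxiliary $s$ with $1/t+1/p+1/s=1$, so $s\in(-1,0)$. But with \emph{two} exponents in $(0,1)$ and one negative, the multi-factor reversed H\"older inequality $\int ABC\ge\|A\|_t\|B\|_p\|C\|_s$ is simply false: take $A$ and $B$ to be indicator functions of disjoint sets of positive measure and $C\equiv1$, so the left side vanishes while the right side is strictly positive. The version that does hold, and the one the paper uses, has exactly \emph{one} exponent $q'\in(0,1)$ (carried by $\gamma_3$, a product of $g$ and $f$ with no $h$) and the other two, $p'$ and $t'$, negative; one first applies the two-factor reversed H\"older with conjugate pair $(q',q)$, $q<0$, and then splits $\|\gamma_1\gamma_2\|_{L^q}$ using ordinary H\"older on $\gamma_1^q\gamma_2^q$ with exponents $p'/q,\,t'/q>1$, the negative power $1/q$ flipping the final inequality. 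Your decomposition would need to be reorganized along those lines before the argument can be completed.
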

\begin{proof}
Let $q<0$ such that $\frac{1}{p}+\frac{1}{t}+\frac{1}{q}=2$ and $\frac{1}{q}+\frac{1}{q^\prime}=1.$ Denote
$$I=\int_{B^n}\int_{\partial B^n}g(x)f(y)h(x,y)dy dx$$ and
\begin{eqnarray*}
\gamma_1(x,y)&=&g^{\frac{t}{p^\prime}}(x)h^{1-a}(x,y),\\
\gamma_2(x,y)&=&f^{\frac{p}{t^\prime}}(y)h^a(x,y),\\
\gamma_3(x,y)&=&g^{\frac{t}{q^\prime}}(x)f^{\frac{p}{q^\prime}}(y).
\end{eqnarray*}
By reversed H\"{o}lder inequality and H\"{o}lder inequality, we have
\begin{eqnarray*}
I&\ge&(\int_{B^n}\int_{\partial B^n}\gamma_3^{q^\prime}(x,y)dy dx)^{\frac{1}{q^\prime}}(\int_{B^n}\int_{\partial B^n}[\gamma_1(x,y)\gamma_2(x,y)]^q dy dx)^{\frac{1}{q}}\\
&\ge&(\int_{B^n}\int_{\partial B^n}\gamma_3^{q^\prime}(x,y)dy dx)^{\frac{1}{q^\prime}}(\int_{B^n}\int_{\partial B^n}\gamma_1^{p^\prime}(x,y) dy dx)^{\frac{1}{p^\prime}}(\int_{B^n}\int_{\partial B^n}\gamma_2^{t^\prime}(x,y) dy dx)^{\frac{1}{t^\prime}}\\
&=&\|\gamma_3\|_{L^{q^\prime}(B^n\times\partial B^n)}\|\gamma_1\|_{L^{p^\prime}(B^n\times\partial B^n)}\|\gamma_2\|_{L^{t^\prime}(B^n\times\partial B^n)}.
\end{eqnarray*}
On the other hand, it is easy to see that
\begin{equation*}
\|\gamma_3\|_{L^{q^\prime}(B^n\times\partial B^n)}=(\int_{B^n}\int_{\partial B^n}g^t(x)f^p(y)dy dx)^{\frac{1}{q^\prime}}=\|g\|^{\frac{t}{q^\prime}}_{L^t(B^n)}\|f\|^{\frac{p}{q^\prime}}_{L^p(\partial B^n)},
\end{equation*}
and
\begin{eqnarray*}
\|\gamma_2\|_{L^{t^\prime}(B^n\times\partial B^n)}&=&(\int_{B^n}\int_{\partial B^n}f^p(y)h^{at^\prime}(x,y)dy dx)^{\frac{1}{t^\prime}}\\
&=&(\int_{\partial B^n} f^p(y)dy)^{\frac{1}{t^\prime}}(\int_{B^n} h^{at^\prime}(x,x^o)dx)^{\frac{1}{t^\prime}}\\
&=&\|f\|^{\frac{p}{t^\prime}}_{L^p(\partial B^n)}\|h(\cdot,x^o)\|^a_{L^{at^\prime}(B^n)}.
\end{eqnarray*}

Since $h(x,y)\ge h_1(x^*,y)\ge0, \forall~ x\in B^n, y\in \partial B^n$  and $\|h_1(z^1,\cdot)\|_{L^{(1-a)p^\prime}(\partial B^n)}=\|h_1(z^2,\cdot)\|_{L^{(1-a)p^\prime}(\partial B^n)}, ~\forall~ z^1,z^2\in \partial B^n$, we have
\begin{eqnarray*}
\|\gamma_1\|_{L^{p^\prime}(B^n\times\partial B^n)}&=&(\int_{B^n}\int_{\partial B^n}g^t(x)h^{(1-a)p^\prime}(x,y)dy dx)^{\frac{1}{p^\prime}}\\
&\ge&(\int_{B^n}\int_{\partial B^n}g^t(x)h_1^{(1-a)p^\prime}(x^*,y)dy dx)^{\frac{1}{p^\prime}}\\
&=&(\int_{B^n} g^t(x)dx)^{\frac{1}{p^\prime}}(\int_{\partial B^n} h_1^{(1-a)p^\prime}(0,y)dy)^{\frac{1}{p^\prime}}\\
&=&\|g\|^{\frac{t}{p^\prime}}_{L^t( B^n)}\|h_1(0, \cdot)\|^{1-a}_{L^{(1-a)p^\prime}(\partial B^n)},
\end{eqnarray*}
where $\int_{B^n}\int_{\partial B^n}g^t(x)h^{(1-a)p^\prime}(x,y)dy dx$ and $\int_{B^n}\int_{\partial B^n}g^t(x)h_1^{(1-a)p^\prime}(x^*,y)dy dx$ could  be $0$ and $+\infty$, in which case the above inequality holds automatically.

Lemma follows from above estimates on $\gamma_i$ for $i=1,2,3$.
\end{proof}

\begin{crl}\label{RYW-ball}
Assume that $\alpha>n,0<p<\frac{2(n-1)}{n+\alpha-2}, 0<t<\frac{2n}{n+\alpha}$. There is a positive constant $C(n,\alpha,p,t)>0$, such that: for all nonnegative $f\in L^p(\partial B^n), g\in L^t(B^n)$, there holds
\begin{equation}\label{IYW-ball}
\int_{B^n}\int_{\partial B^n}\frac{g(x)f(y)}{|x-y|^{n-\alpha}}dy dx\ge C(n,\alpha,p,t)\|f\|_{L^p(\partial B^n)}\|g\|_{L^t(B^n)}.
\end{equation}
\end{crl}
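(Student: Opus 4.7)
The plan is to invoke Lemma~\ref{RY-ball} with $h(x,y) = |x-y|^{\alpha-n} = 1/|x-y|^{n-\alpha}$ on $B^n \times \partial B^n$. Since $\alpha > n$ one has $2(n-1)/(n+\alpha-2) < 1$ and $2n/(n+\alpha) < 1$, so the subcritical hypotheses already place $p$ and $t$ in $(0,1)$, as Lemma~\ref{RY-ball} requires. The conjugate exponents $p'$ and $t'$ are negative, so that the $L^{at'}$ and $L^{(1-a)p'}$ ``norms'' in the lemma are reverse Lebesgue norms for every $a \in (0,1)$.

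To produce an auxiliary kernel $h_1$ on $\partial B^n \times \partial B^n$, I would first establish the pointwise bound
\[
|x - y| \ge \tfrac{1}{2}\, |x^* - y|, \qquad x \in B^n,\ y \in \partial B^n.
\]
Parametrize $x = z^o + rv$, $y = z^o + w$ with unit $v, w$ and $r \in [0,1]$, and set $s = v\cdot w$; then $|x-y|^2 = r^2 + 1 - 2rs$ and $|x^*-y|^2 = 2(1-s)$, and the desired bound reduces to the elementary inequality $2r^2 - 4rs + 1 + s \ge 0$ on $[0,1]\times[-1,1]$, with equality only at $(r,s) = (0,-1)$ (i.e.\ $(x,y) = (z^o, x^o)$). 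Since $\alpha - n > 0$, the choice
\[
h_1(\zeta, y) := 2^{-(\alpha-n)} |\zeta - y|^{\alpha - n}, \qquad \zeta, y \in \partial B^n,
\]
therefore satisfies $h(x, y) \ge h_1(x^*, y)$ pointwise, and the rotational symmetry of $B^n$ and $\partial B^n$ about $z^o$ ensures that $\|h(\cdot, y)\|_{L^{at'}(B^n)}$ and $\|h_1(\zeta, \cdot)\|_{L^{(1-a)p'}(\partial B^n)}$ are independent of $y, \zeta \in \partial B^n$.

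The remaining task is to choose $a \in (0,1)$ so that $\|h(\cdot, x^o)\|_{L^{at'}(B^n)}$ and $\|h_1(0, \cdot)\|_{L^{(1-a)p'}(\partial B^n)}$ are finite. Because $(\alpha-n)\,at' < 0$ and $x^o \in \partial B^n$, a local analysis of the singularity at $x^o$ shows the first norm is finite precisely when $(\alpha - n)\,a\,|t'| < n$; similarly, integration on the $(n-1)$-sphere near $0$ gives the condition $(\alpha - n)(1-a)\,|p'| < n-1$. Substituting $|t'| = t/(1-t)$ and $|p'| = p/(1-p)$, these two constraints on $a$ are compatible for some $a \in (0,1)$ whenever
\[
\frac{n-1}{p} + \frac{n}{t} > n + \alpha - 1,
\]
an inequality that holds throughout the subcritical range (with equality precisely at the critical exponents $p = 2(n-1)/(n+\alpha-2)$, $t = 2n/(n+\alpha)$). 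Therefore such an $a$ exists and Lemma~\ref{RY-ball} yields \eqref{IYW-ball}.

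The main technical obstacle is the pointwise bound $|x-y| \ge \tfrac{1}{2}|x^*-y|$: elementary though it is, it is exactly what is needed to dominate $h$ from below by a kernel depending only on $x^*$ and $y$. The subsequent exponent bookkeeping is routine, but it also clarifies the sharpness of the subcritical hypothesis, as the decisive inequality $(n-1)/p + n/t > n+\alpha-1$ degenerates at the critical exponents---which is why the limiting argument carried out in the rest of this section is required to reach the critical case.
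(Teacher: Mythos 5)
Your proposal is correct and follows essentially the same route as the paper: both apply Lemma~\ref{RY-ball} with $h(x,y)=|x-y|^{\alpha-n}$ and a boundary kernel $h_1$ dominated by $h$ via the pointwise bound $|x-y|\ge\tfrac12|x^*-y|$, then verify the two integrability conditions on $h(\cdot,x^o)$ and $h_1(0,\cdot)$. The only differences are cosmetic---the paper fixes $a=\tfrac12$ and truncates $h_1$ at $|z-y|=\sqrt2$, whereas your general-$a$ bookkeeping recovers the broader admissible range already noted in Remark~\ref{rem-UHST} (well, Remark 2.4 just after the corollary)---and one small slip: equality in $2r^2-4rs+1+s\ge0$ on $[0,1]\times[-1,1]$ also occurs at $(r,s)=(1,1)$ (i.e.\ $x=y$), not only at $(0,-1)$, though this is immaterial to the argument.
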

\begin{proof} In Lemma \ref{RY-ball}, we choose
$$h(x,y)=\frac{1}{|x-y|^{n-\alpha}}$$
and
 $$h_1(z,y)=\begin{cases}
1, \quad& \text{if}~|z-y|>\sqrt{2},\\
\frac{1}{(\frac{|z-y|}{2})^{n-\alpha}},\quad& \text{if}~|z-y|\le\sqrt{2}.
\end{cases}$$
where $x\in B^n, y,z \in\partial B^n$.

It is easy to see  that
$ h(\cdot, x^o)\in L^{\frac{t^\prime}{2}}(B^n), h_1(0, \cdot)\in L^{\frac{p^\prime}{2}}(\partial B^n)$, since
$$(n-\alpha)\frac{t^\prime}{2}<n, \ \ \ \ \mbox{and}  \  \  \   (n-\alpha)\frac{p^\prime}{2}<n-1,$$
which follow from the assumption that  both $p, \ t$ are subcritical, that is, $0<p<\frac{2(n-1)}{n+\alpha-2}, \ 0<t<\frac{2n}{n+\alpha}$.

It is also easy to verify $\|h_1(z^1,\cdot)\|_{L^{\frac{p^\prime}{2}}(\partial B^n)}=\|h_1(z^2,\cdot)\|_{L^{\frac{p^\prime}{2}}(\partial B^n)}, ~\forall~ z^1,z^2\in \partial B^n$; as well as  $h(x,y)\ge h_1(x^*,y), ~\forall~ x\in B^n, y\in \partial B^n$. Corollary \ref{RYW-ball} then follows from  Lemma \ref{RY-ball}.
\end{proof}

\begin{rem}
From above proof it is easy to see that inequality \eqref{IYW-ball} still holds if  $0<p\le\frac{2(n-1)}{n+\alpha-2}, 0<t<\frac{2n}{n+\alpha}$ or $0<p<\frac{2(n-1)}{n+\alpha-2}, 0<t\le\frac{2n}{n+\alpha}$ just by taking a different $a\in(0,1)$. More generally, the inequality \eqref{IYW-ball} holds for any $p,t\in(0,1)$ satisfying
$$1-\frac{n-1}{n-\alpha}(1-\frac 1p) <\frac n{n-\alpha}(1-\frac 1t).
$$
\end{rem}

\subsection{Sharp inequality in subcritical case} For $f(y)$ defined on $\partial B^n$, we introduce the extension operator as in \cite{DZ-2}:
$$E_\alpha f(x)=\int_{\partial B^n}\frac{f(y)}{|x-y|^{n-\alpha}}dy,~\text{for all}~x\in B^n.$$
Then  inequality \eqref{IYW-ball} is equivalent to
\begin{equation}\label{IYW-ball-equv}
\|E_\alpha f\|_{L^{t^\prime}(B^n)}\ge C(n,\alpha,p,t)\|f\|_{L^p(\partial B^n)},
\end{equation}
for nonnegative function $f(y)$.
To obtain the best constant $C(n,\alpha,p,t)$,
we consider
\begin{equation} \label{inf-UHS-sub}
\xi_\alpha:=\inf\{\|E_\alpha f\|_{L^{t^\prime}(B^n)}\ \ : \ \ f \ge 0, \ \ \ \|f\|_{L^p(\partial B^n)}=1, \  \|f\|_{L^2(\partial B^n)}\le b_2 \}
\end{equation}
for any $b_2 \ge \omega_{n-1}^{1/p-1/2}$, where $\omega_{n-1} $ is the surface area of $n-1$-dimension sphere $\partial B^n$. We first show that the above infimum can be achieved for subcriticall exponents $p, \ t$.


\begin{prop}\label{thm-exist-UHS-sub}
Assume that $\alpha>n,0<p\le \frac{2(n-1)}{n+\alpha-2}, 0<t\le \frac{2n}{n+\alpha}$. Then the infimum in \eqref{inf-UHS-sub} is attained by a nonnegative $f^*\in L^p(\partial B^n)$ with $\|f^*\|_{L^p(\partial B^n)}=1$.\
\end{prop}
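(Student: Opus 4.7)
The approach I would take is the direct method in the calculus of variations, where the $L^2$ constraint in the definition of $\xi_\alpha$ is designed precisely to supply compactness. Let $\{f_k\}$ be a minimizing sequence with $f_k \ge 0$, $\|f_k\|_{L^p(\partial B^n)} = 1$, $\|f_k\|_{L^2(\partial B^n)} \le b_2$, and $\|E_\alpha f_k\|_{L^{t'}(B^n)} \to \xi_\alpha$. Since $\{f_k\}$ is bounded in $L^2(\partial B^n)$, pass to a subsequence with $f_k \rightharpoonup f^*$ weakly in $L^2$; then $f^* \ge 0$ a.e.\ and $\|f^*\|_{L^2} \le b_2$ by weak lower semicontinuity.

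Because $\alpha > n$, the kernel $K(x,y) = |x-y|^{\alpha-n}$ is continuous (in fact H\"older) and bounded on the compact set $\overline{B^n} \times \partial B^n$. For each fixed $x$ we have $K(x,\cdot) \in L^2(\partial B^n)$, so weak convergence gives $E_\alpha f_k(x) \to E_\alpha f^*(x)$ pointwise; the uniform continuity of $K$ together with $\sup_k \|f_k\|_{L^2} \le b_2$ renders $\{E_\alpha f_k\}$ equicontinuous and uniformly bounded on $\overline{B^n}$, so Arzel\`a--Ascoli upgrades this to uniform convergence. Since $\xi_\alpha > 0$ by Corollary \ref{RYW-ball}, $E_\alpha f^*$ cannot be identically zero; as $K > 0$ off a null set, this forces $f^* \not\equiv 0$ and the continuous function $E_\alpha f^*$ to be strictly positive on the compact set $\overline{B^n}$, hence bounded below by some $c > 0$. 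Uniform convergence then yields $E_\alpha f_k \ge c/2$ for all large $k$, so $(E_\alpha f_k)^{t'} \le (c/2)^{t'}$ is a legitimate dominating function (recall $t' < 0$), and dominated convergence gives $\|E_\alpha f_k\|_{L^{t'}(B^n)} \to \|E_\alpha f^*\|_{L^{t'}(B^n)} = \xi_\alpha$.

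The main obstacle is to verify $\|f^*\|_{L^p(\partial B^n)} = 1$, because the quasi-norm $\|\cdot\|_p$ is not convex for $p \in (0,1)$ and hence need not be weakly lower semicontinuous in $L^2$. To handle this, I would apply Mazur's lemma to produce convex combinations $\tilde f_j = \sum_k \lambda_{j,k} f_k$ (finite sums, $\lambda_{j,k} \ge 0$, $\sum_k \lambda_{j,k} = 1$) with $\tilde f_j \to f^*$ strongly in $L^2(\partial B^n)$. Concavity of $s \mapsto s^p$ on $[0,\infty)$ together with Jensen's inequality gives the pointwise bound $\tilde f_j^p \ge \sum_k \lambda_{j,k} f_k^p$, and integrating yields $\int \tilde f_j^p \ge \sum_k \lambda_{j,k} \int f_k^p = 1$. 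Passing to an a.e.\ convergent subsequence and using the H\"older estimate $\int_E \tilde f_j^p \le |E|^{1-p/2} \|\tilde f_j\|_{L^2}^p \le b_2^p\, |E|^{1-p/2}$ to supply uniform integrability, Vitali's convergence theorem gives $\int (f^*)^p = \lim_j \int \tilde f_j^p \ge 1$, hence $\|f^*\|_p \ge 1$.

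A final scaling argument rules out strict inequality: if $\|f^*\|_p > 1$, then $\tilde f^* := f^*/\|f^*\|_p$ satisfies $\|\tilde f^*\|_p = 1$ and $\|\tilde f^*\|_{L^2} = \|f^*\|_{L^2}/\|f^*\|_p \le b_2$, so it is admissible, but $\|E_\alpha \tilde f^*\|_{L^{t'}} = \xi_\alpha / \|f^*\|_p < \xi_\alpha$, contradicting the definition of the infimum. Therefore $\|f^*\|_p = 1$, and $f^*$ is the desired minimizer.
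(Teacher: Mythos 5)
Your proof is correct, but it takes a genuinely different route from the paper's. The paper extracts \emph{two} weak limits from the minimizing sequence: $(f^*)^p$ as the weak $L^{2/p}(\partial B^n)$ limit of $f_i^p$, which makes $\int_{\partial B^n}(f^*)^p=1$ immediate, and $F$ as the weak $L^1$ limit of $f_i$ itself, which gives uniform convergence $E_\alpha f_i\to E_\alpha F$; the two objects are then reconciled by the H\"older bound $E_\alpha f^*(x)\le\liminf_i E_\alpha f_i(x)=E_\alpha F(x)$ together with $t'<0$. You instead work with a \emph{single} weak $L^2$ limit $f^*$ of $f_i$ (which coincides with the paper's $F$), obtain $\|E_\alpha f^*\|_{L^{t'}}=\xi_\alpha$ directly from uniform convergence and the uniform positive lower bound, and then recover the normalization by Mazur's lemma, concavity of $s\mapsto s^p$ on $[0,\infty)$, and Vitali (yielding $\int(f^*)^p\ge 1$), finishing with a one-homogeneity scaling argument that exploits the stability of the constraint $\|f\|_{L^2}\le b_2$ under division by $\|f^*\|_{L^p}\ge1$. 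The scaling step is a clean shortcut, and Mazur's lemma substitutes for the paper's device of treating $f_i^p$ (rather than $f_i$) as the weakly convergent object; both are ways around the failure of weak lower semicontinuity of the $L^p$ quasi-norm for $p<1$. One shared minor gap worth noting: you cite Corollary \ref{RYW-ball} for $\xi_\alpha>0$, but that corollary together with Remark 2.4 covers only the cases where $p$ and $t$ are not \emph{both} at their critical endpoints; the paper's own proof likewise asserts $\xi_\alpha>0$ without comment. Since the proposition is applied only in the strictly subcritical range before the limiting argument, this is benign in both versions.
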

\begin{proof}
%

 Let $\{f_i\}$ be a nonnegative minimizing sequence of \eqref{inf-UHS-sub} with $\|f_i\|_{L^p(\partial B^n)}=1$, and $\|f_i\|_{L^2(\partial B^n)}\le b_2$. Then, up to a subsequence, $f_i^p \to (f^*)^p$ weakly in $L^{2/p}(\partial B^n)$. Thus, $f^*\ge 0$, $\int_{\partial B^n}(f^*)^p=1$.
Since
$$ \int_{\partial B^n} f_i^p \cdot (f^*)^{2-p} \to \int_{\partial B^n}  (f^*)^{2},$$
we know via H\"{o}lder inequality that $\|f^*\|_{L^2} \le\lim_{i\to \infty} \|f_i\|_{L^2} \le b.$

On the other hand, noticing that up to a subsequence, $f_i^p \to (f^*)^p$ weakly in $L^{1/p}(\partial B^n)$, for any $x \in B^n$, $(f^*)^{1-p}(y)|x-y|^{\alpha-n} \in L^{\frac{1}{1-p}}(\partial B^n)$.
Thus
$$ \int_{\partial B^n} f_i^p \cdot (f^*)^{1-p}|x-y|^{\alpha-n} dy  \to \int_{\partial B^n}  f^*|x-y|^{\alpha-n}dy.$$
Using H\"{o}lder inequality again, we have
$$
\underline {\lim}_{i \to \infty} E_\alpha f_i(x) \ge E_\alpha f^*(x).
$$
Note  $\|f_i\|_{L^2(\partial B^n)}\le b_2$. Then, up to a subsequence, $f_i \to F$ weakly in $L^{1}(\partial B^n)$. Thus $E_\alpha f_i (x) \to E_\alpha F(x) $ pointwise. In fact, since $f_i$ is uniformly bounded in $L^1(\partial B^n),$ we know that $\{E_\alpha f_i(x)\}_{i=1}^\infty$ is uniformly bounded and equicontinuous on $\overline{B^n}$. Thus, up to a subsequence, $E_\alpha f_i (x) \to E_\alpha F(x)$ uniformly  in $B^n$. If $\|F\|_{L^1(\partial B^n)} =0$, then $(E_\alpha f_i)^{t'} \to \infty$, thus $\|E_\alpha f_i\|_{L^{t'}} \to 0$, which is a contradiction to the fact that  $\|E_\alpha f_i\|_{L^{t'}} \to \xi_\alpha >0$. So, $\|F\|_{L^1(\partial B^n)} >0$, thus $E_\alpha F >0$. It follows that  $(E_\alpha f_i (x))^{t'}  \to (E_\alpha F(x))^{t'}  $ uniformly  in $B^n$. Therefore,
$$
{\lim}_{i \to \infty} \int_{B^n}(E_\alpha f_i(x))^{t'}dx =  \int_{B^n} {\lim}_{i \to \infty} (E_\alpha f_i(x))^{t'}dx \le \int_{B^n} (E_\alpha f^*(x))^{t'} dx,
$$
which yields $\|E_\alpha f^*\|_{L^{t'}} \le \xi_\alpha$, that is:  $f^*$ is a non-negative minimizer.


\end{proof}


\subsection{Best constant for the reversed inequality with subcritical exponents}

%
%

Standard variational argument shows that $f^*(x)>0$. Also, near any point $z \in \partial B^n$, if $\phi(x) $ is in $L^2(\partial B^n \cap B_\epsilon (z))$, then, up to a constant multiplier,
$$
\int_{\partial B^n}f^{p-1}(z)\phi(z) dz=\int_{\partial B^n}\int_{B^n}\frac{(E_\alpha f(\xi))^{t^\prime-1}}{|\xi-z|^{n-\alpha}}\phi(z)d\xi dz.
$$
Since $E_\alpha f^*$ is continuous, we know that $f^*$
satisfies
\begin{equation}\label{equ-sub}
f^{p-1}(z)=\int_{B^n}\frac{(E_\alpha f(\xi))^{t^\prime-1}}{|\xi-z|^{n-\alpha}}d\xi, \ \ ~\forall~z\in\partial B^n.
\end{equation}
We will show that for $0<p< 2(n-1)/(n+\alpha-2)$ and $ 0<t < 2n/(n+\alpha),$  solution $f(z)=constant$ for any $z \in \partial B^n$ via moving plane method, thus to conclude that
\begin{equation}\label{9-19-1}
\xi_\alpha=(n\omega_n)^{-\frac{1}{p}}\big(\int_{B^n}\big(\int_{\partial B^n}\frac{1}{|\xi-\zeta|^{n-\alpha}}d\zeta\big)^{t^\prime}d\xi\big)^{\frac{1}{t^\prime}}.
\end{equation}



Let $y=z^{x^o}\in \partial \R^n_+, x=\xi^{x^o}\in \R^n_+$, where $z\in \partial B^n, \xi\in B^n$. Then we have
$$f^{p-1}(y^{x^o})=\int_{\R^n_+}\frac{(E_\alpha f(x^{x^o}))^{t^\prime-1}}{|x^{x^o}-y^{x^o}|^{n-\alpha}}\big(\frac{2}{|x-x^o|}\big)^{2n}dx. $$
Notice that
$$E_\alpha f(x^{x^o})=\int_{\partial \R^n_+}\frac{f(\zeta^{x^o})}{|x^{x^o}-\zeta^{x^o}|^{n-\alpha}}\big(\frac{2}{|\zeta-x^o|}\big)^{2(n-1)}d\zeta,$$
thus
$$
f^{p-1}(y^{x^o})=\int_{\R^n_+}\frac{\big(\int_{\partial \R^n_+}\frac{f(\zeta^{x^o})}{|x-\zeta|^{n-\alpha}}\big(\frac{2}{|\zeta-x^o|}\big)^{n+\alpha-2}d\zeta\big)^{t^\prime-1}}{|x-y|^{n-\alpha}}
\big(\frac{|y-x^o|}{2}\big)^{n-\alpha}\big(\frac{2}{|x-x^o|}\big)^{(\alpha-n)t^\prime+2n}dx.
$$
That is
\begin{eqnarray}\label{equ-UHS-sub}\nonumber
&&\big((\frac{2}{|y-x^o|})^{\frac{n-\alpha}{p-1}}f(y^{x^o})\big)^{p-1}\\
&=&\int_{\R^n_+}\frac{\big(\int_{\partial \R^n_+}\frac{f(\zeta^{x^o})}{|x-\zeta|^{n-\alpha}}\big(\frac{2}{|\zeta-x^o|}\big)^{n+\alpha-2}d\zeta\big)^{t^\prime-1}}{|x-y|^{n-\alpha}}
\big(\frac{2}{|x-x^o|}\big)^{(\alpha-n)t^\prime+2n}dx.
\end{eqnarray}
Denote $$F(y):=\big(\frac{2}{|y-x^o|}\big)^{\frac{2(n-1)}{p}}f(y^{x^o}),~\overline{u}(y):=F^{p-1}(y)$$
and
$$\overline{v}(x)=\int_{\partial \R^n_+}\frac{F(\zeta)}{|x-\zeta|^{n-\alpha}}\big(\frac{2}{|\zeta-x^o|}\big)^{n+\alpha-2-\frac{2(n-1)}{p}}\big(\frac{2}{|x-x^o|}\big)^{n+\alpha-\frac{2n}{t}}d\zeta.$$ For simplicity, we also denote $\kappa:=t^\prime-1<0,~ \theta:=\frac{1}{p-1}<0,~ s_1^\prime:=n+\alpha-2-\frac{2(n-1)}{p}<0,~  s_2^\prime:=n+\alpha-\frac{2n}{t}<0$ since $0<p<\frac{2(n-1)}{n+\alpha-2}, 0<t<\frac{2n}{n+\alpha}$. Thus the single equation \eqref{equ-UHS-sub} can be rewritten as
\begin{equation}\label{equ1-UHS-sub}
\begin{cases}
\overline{u}(y)=\int_{\R^n_+}\frac{\overline{v}^\kappa(x)}{|x-y|^{n-\alpha}}
\big(\frac{2}{|x-x^o|}\big)^{s^\prime_2}\big(\frac{2}{|y-x^o|}\big)^{s^\prime_1}dx, ~y\in\partial \R^n_+,\\
\overline{v}(x)=\int_{\partial \R^n_+}\frac{\overline{u}^\theta(y)}{|x-y|^{n-\alpha}}\big(\frac{2}{|x-x^o|}\big)^{s^\prime_2}\big(\frac{2}{|y-x^o|}\big)^{s^\prime_1}dy,~x\in \R^n_+,
\end{cases}
\end{equation}
where $\overline{u}\in L^{\theta+1}(\partial \R^n_+), \ \overline{v}\in L^{t^\prime}(\R^n_+).$

%
For simplicity, let $u(y)=\overline{u}(y)|y-x^o|^{s^\prime_1}, $ and $ v(x)=\overline{v}(x)|x-x^o|^{s^\prime_2}.$ Thus, up to a constant, we shall  just consider
\begin{equation}\label{equ2-UHS-sub}
\begin{cases}
u(y)=\int_{\R^n_+}\frac{v^\kappa(x)}{|x-y|^{n-\alpha}}\frac{1}{|x-x^o|^{s_2}}
dx, ~y\in\partial \R^n_+,\\
v(x)=\int_{\partial \R^n_+}\frac{u^\theta(y)}{|x-y|^{n-\alpha}}\frac{1}{|y-x^o|^{s_1}}dy,~x\in \R^n_+,
\end{cases}
\end{equation}
where $s_1:=n+\alpha-2-\frac{n-\alpha}{p-1}>0,~  s_2:=(\alpha-n)t^\prime+2n>0.$


In the rest of this section,  for any positive Lebesgue measurable  solutions $(u,v)$ to system \eqref{equ2-UHS-sub}, we always assume that $u(y)=\overline{u}(y)|y-x^o|^{s^\prime_1},v(x)=\overline{v}(x)|x-x^o|^{s^\prime_2}$ with  $\overline{u}(y)\in L^{\theta+1}(\partial \R^n_+),\overline{v}(x)\in L^{t^\prime}(\R^n_+)$.

To classify all positive solutions to  system \eqref{equ2-UHS-sub}, we need some lemmas.
\begin{lm}\label{lm-UHS-mmp-es}
Assume that $\alpha>n$ and $\theta, \kappa<0$.  Let $(u,v)$ be a pair of positive Lebesgue measurable  solutions to system \eqref{equ2-UHS-sub}. Then

(\Rmnum{1}) $\int_{\partial \R^n_+} (1+|y|^{\alpha-n})\frac{u^\theta(y)}{|y-x^o|^{s_1}}dy<\infty,~\int_{\R^n_+} (1+|x|^{\alpha-n})\frac{v^\kappa(x)}{|x-x^o|^{s_2}}dx<\infty;$

(\Rmnum{2}) it holds
$$0<a:=\lim\limits_{|y|\to\infty}|y|^{n-\alpha}u(y)=\int_{\R^n_+} \frac{v^\kappa(x)}{|x-x^o|^{s_2}}dx<\infty,$$
and
$$0<b:=\lim\limits_{|x|\to\infty}|x|^{n-\alpha}v(x)=\int_{\partial \R^n_+} \frac{u^\theta(y)}{|y-x^o|^{s_1}}dy<\infty;$$

(\Rmnum{3}) there exist some constants $C_1, C_2$ such that

$$\frac{1+|y|^{\alpha-n}}{C_1}\le u(y)\le C_1(1+|y|^{\alpha-n}), ~\forall ~y\in\partial \R^n_+,$$
and
$$\frac{1+|x|^{\alpha-n}}{C_2}\le v(x)\le C_2(1+|x|^{\alpha-n}), ~\forall ~x\in \R^n_+.$$
\end{lm}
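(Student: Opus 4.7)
The plan is to prove the three items in order, using (\Rmnum{1}) to supply the weighted integrability that feeds the limit argument in (\Rmnum{2}), and then (\Rmnum{1}) together with (\Rmnum{2}) to produce the matching two-sided bounds in (\Rmnum{3}).

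For (\Rmnum{1}), I would fix any $x_0\in\R^n_+$ and exploit that $v(x_0)<\infty$ in the second equation of \eqref{equ2-UHS-sub}. Since $\alpha>n$, the kernel $|x_0-y|^{\alpha-n}=1/|x_0-y|^{n-\alpha}$ is bounded below by $((x_0)_n)^{\alpha-n}>0$ for every $y\in\partial\R^n_+$, while the reverse triangle inequality gives $|x_0-y|\ge|y|/2$ when $|y|\ge 2|x_0|$, so $|x_0-y|^{\alpha-n}\ge c_0|y|^{\alpha-n}$ in the far field. Piecing the two regimes together yields $|x_0-y|^{\alpha-n}\ge c(1+|y|^{\alpha-n})$ uniformly on $\partial\R^n_+$, so the finiteness of $v(x_0)$ forces
\[
\int_{\partial\R^n_+}\frac{(1+|y|^{\alpha-n})\,u^\theta(y)}{|y-x^o|^{s_1}}\,dy<\infty.
\]
The companion integrability claim for $v$ follows by the identical argument applied to $u(y_0)<\infty$ for any fixed $y_0\in\partial\R^n_+$.

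Next, for (\Rmnum{2}), I would rewrite the first equation of \eqref{equ2-UHS-sub} as
\[
|y|^{n-\alpha}u(y)=\int_{\R^n_+}\Bigl(\frac{|x-y|}{|y|}\Bigr)^{\alpha-n}\frac{v^\kappa(x)}{|x-x^o|^{s_2}}\,dx.
\]
For $|y|\ge 1$ the factor $(|x-y|/|y|)^{\alpha-n}$ is bounded above by $(1+|x|)^{\alpha-n}$, and by (\Rmnum{1}) this majorant is integrable against $v^\kappa(x)/|x-x^o|^{s_2}$. Since $(|x-y|/|y|)^{\alpha-n}\to 1$ pointwise as $|y|\to\infty$, dominated convergence identifies the limit $a$ with $\int_{\R^n_+}v^\kappa/|x-x^o|^{s_2}\,dx$, which is positive because $v>0$ and finite by (\Rmnum{1}). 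The statement for $b$ is completely symmetric.

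Finally, for (\Rmnum{3}), the limit in (\Rmnum{2}) immediately produces matching upper and lower bounds of order $|y|^{\alpha-n}$ for $u(y)$ whenever $|y|$ is large, and similarly for $v$. On a bounded set $\{|y|\le R\}$, the elementary estimate $|x-y|^{\alpha-n}\le C_R(1+|x|^{\alpha-n})$ substituted into the integral representation of $u$, together with (\Rmnum{1}), furnishes a uniform upper bound, and the symmetric argument bounds $v$. With $u,v$ uniformly bounded above on every compact set, the exponents $\theta,\kappa<0$ make $u^\theta$ and $v^\kappa$ bounded below on every compact set, so restricting the integral representations to a compact subset where the integrand stays bounded below by a positive constant yields the uniform positive lower bounds on bounded sets. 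Splicing the near- and far-field estimates then gives the stated two-sided inequalities.

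The hard part will be the domination step in (\Rmnum{2}): one needs an $L^1$-majorant that is uniform in $y$, not merely pointwise, and that uniformity becomes available exactly because (\Rmnum{1}) is proved with the sharpened weight $(1+|y|^{\alpha-n})$ rather than the unweighted integrand. Once (\Rmnum{1}) is in hand in that sharpened form, the remainder is routine dominated-convergence bookkeeping combined with triangle-inequality estimates on the kernel.
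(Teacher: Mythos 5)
Your strategy for parts (\Rmnum{2}) and (\Rmnum{3}) follows the same dominated-convergence and kernel-splitting route as the paper and is sound. The problem is in part (\Rmnum{1}), where the claim that ``the companion integrability claim for $v$ follows by the identical argument applied to $u(y_0)<\infty$ for any fixed $y_0\in\partial\R^n_+$'' is false. The argument you gave for the boundary integral of $u^\theta$ hinges precisely on the fact that $x_0$ has distance $(x_0)_n>0$ from the boundary, which gives the \emph{uniform} lower bound $|x_0-y|^{\alpha-n}\ge((x_0)_n)^{\alpha-n}>0$ for all $y\in\partial\R^n_+$. When you switch roles and choose $y_0\in\partial\R^n_+$, the kernel $|x-y_0|^{\alpha-n}$ vanishes as $x\to y_0$ inside $\R^n_+$ (recall $\alpha-n>0$), so there is no analogous uniform positive lower bound, and $u(y_0)<\infty$ cannot by itself control $\int_{\R^n_+}(1+|x|^{\alpha-n})\,v^\kappa(x)/|x-x^o|^{s_2}\,dx$ near $y_0$.

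The paper's fix is to take \emph{two} boundary points $y^1,y^2\in\partial\R^n_+$ with $|y^1-y^2|=\delta$ and $u(y^1),u(y^2)<\infty$. Outside $B(y^1,\delta/2)$ one has $|x-y^1|^{\alpha-n}\ge C(1+|x|^{\alpha-n})$, and outside $B(y^2,\delta/2)$ one has $|x-y^2|^{\alpha-n}\ge C(1+|x|^{\alpha-n})$; since these two complements cover $\R^n_+$, adding $u(y^1)+u(y^2)$ controls the full weighted integral of $v^\kappa$. You need to insert this two-point device. The ordering of the rest of your argument is fine: (\Rmnum{1})$\Rightarrow$(\Rmnum{2}) by dominated convergence is exactly the paper's step, and your derivation of (\Rmnum{3}) from (\Rmnum{1}) and (\Rmnum{2}) (far-field bounds from the limits, near-field upper bounds from the weighted integrability, near-field lower bounds by feeding a local upper bound for $v$ back through the integral representation of $u$) is a valid variant of the paper's argument, which instead lower-bounds $u$ directly by integrating only over a set of positive measure where $v$ is below a fixed threshold.
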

\begin{proof}
We use the similar idea from the proof of Lemma 5.1 in \cite{Li}.

{First of all,  it is easy to know that $u,v\not\equiv\infty$ from the integrability assumption}, thus $$meas\{y\in \partial \R^n_+: u(y)<\infty\}>0, \ \ meas\{x\in \R^n_+: v(x)<\infty\}>0.$$

Take $y^1, y^2\in \partial\R^n_+, |y^1-y^2|=\delta$ such that
$$u(y^1)<\infty, \ u(y^2)<\infty.$$
It is easy to know that there exists $C>0$ such that $$|x-y^1|^{\alpha-n}\ge C(1+|x|^{\alpha-n}),~\forall~x\in \R^n_+\setminus B(y^1,\frac{\delta}{2}),$$ and
$$|x-y^2|^{\alpha-n}\ge C(1+|x|^{\alpha-n}),~\forall~x\in \R^n_+\setminus B(y^2,\frac{\delta}{2}).$$
Thus
\begin{eqnarray*}
\infty &>& u(y^1)+u(y^2)\\
&\ge& C\int_{\R^n_+ \setminus B(y^1,\frac{\delta}{2})}\frac{v^\kappa(x)}{|x-y^1|^{n-\alpha}}
\frac{1}{|x-x^o|^{s_2}}dx+C\int_{\R^n_+ \setminus B(y^2,\frac{\delta}{2})}\frac{v^\kappa(x)}{|x-y^2|^{n-\alpha}}
\frac{1}{|x-x^o|^{s_2}}dx\\
&\ge & C \int_{\R^n_+}(1+|x|^{\alpha-n})\frac{v^\kappa(x)}{|x-x^o|^{s_2}}dx.
\end{eqnarray*}
So $$\int_{\R^n_+}(1+|x|^{\alpha-n})\frac{v^\kappa(x)}{|x-x^o|^{s_2}}dx<\infty.$$ Similarly, we have
$$\int_{\partial \R^n_+} (1+|y|^{\alpha-n})\frac{u^\theta(y)}{|y-x^o|^{s_1}}dy<\infty.$$ Thus (\Rmnum 1) holds.

For $|y|\ge 1$, it holds
$$\frac{|x-y|^{\alpha-n}}{|y|^{\alpha-n}}\le 2^{\alpha-n}(1+|x|^{\alpha-n}).$$  By the  Lebesgue Dominated Convergence Theorem and (\Rmnum 1), we have
$$
a=\lim\limits_{|y|\to\infty}|y|^{n-\alpha}u(y)=
\lim\limits_{|y|\to\infty}\int_{\R^n_+}\frac{|x-y|^{\alpha-n}}{|y|^{\alpha-n}}
\frac{v^\kappa(x)}{|x-x^o|^{s_2}}dx
=\int_{\R^n_+} \frac{v^\kappa(x)}{|x-x^o|^{s_2}}dx<\infty.$$
Similarly,
$$b=\lim\limits_{|x|\to\infty}|x|^{n-\alpha}v(x)=\int_{\partial \R^n_+} \frac{u^\theta(y)}{|y-x^o|^{s_1}}dy<\infty.$$
We thus obtain  (\Rmnum 2).

Take $R>1$ and some measurable set $E$ such that $$E\subset \{x\in \R^n_+,v(x)<R\}\cap B(x^o, R)$$ with $|E|>\frac{1}{R}$.  Then for any $y\in \partial\R^n_+$,
\begin{eqnarray*}
u(y)&=&\int_{\R^n_+}\frac{v^\kappa(x)}{|x-y|^{n-\alpha}}
\frac{1}{|x-x^o|^{s_2}}dx\\
&\ge&\int_E \frac{v^\kappa(x)}{|x-y|^{n-\alpha}}
\frac{1}{|x-x^o|^{s_2}}dx\\
&\ge& R^{\kappa-s_2}\int_E |x-y|^{\alpha-n}dx.
\end{eqnarray*}
So
\begin{eqnarray*}
\lim_{|y|\to\infty}\frac{u(y)}{(1+|y|^{\alpha-n})}&\ge& C\lim_{|y|\to\infty}\frac{R^{\kappa-s_2}}{(1+|y|^{\alpha-n})}\int_E |x-y|^{\alpha-n}dx\\
&\ge& C R^{\kappa-s_2}.
\end{eqnarray*}
That is, there exists $M>0$ large enough such that, for any $|y|\ge M$, it holds
\begin{equation}\label{est1-uv-UHS}
u(y)\ge C(1+|y|^{\alpha-n}).
\end{equation}
On the other hand, we also have, via equation \eqref{equ2-UHS-sub}, that there exists $C_0>0$ such that
\begin{equation}\label{est2-uv-UHS}
\inf_{B(0,M)\cap \partial \R^n_+}u(y)\ge C_0.
\end{equation}
%
Thus the lower bound inequalities in  (\Rmnum 3) hold.  By using (\Rmnum 2), we can easily obtain the upper bound inequalities in  (\Rmnum 3).
\end{proof}

\begin{lm}\label{lm-UHS-regu}
Assume that $\alpha>n$ and $\theta, \kappa<0$.  Let $(u,v)$ be a pair of positive Lebesgue measurable  solutions to system \eqref{equ2-UHS-sub}. Then  $u(y), y\in \partial \R^n_+$ is differentiable in $y_i, i=1,2,\cdots,n-1$, and $v(x), x\in \overline{\R^n_+}$ is differentiable in $x_i, i=1,2,\cdots,n$.
\end{lm}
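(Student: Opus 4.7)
My plan is to differentiate the two integral representations in \eqref{equ2-UHS-sub} under the integral sign and to justify the interchange by dominated convergence, drawing all the necessary bounds from Lemma \ref{lm-UHS-mmp-es}. The crucial observation is that since $\alpha>n$, the ``kernel'' $|x-y|^{-(n-\alpha)}=|x-y|^{\alpha-n}$ is not singular on the diagonal but actually vanishes there, while its first partial derivative has magnitude at most $(\alpha-n)|x-y|^{\alpha-n-1}$, which is at worst a mild, locally integrable singularity.

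For the first assertion, I fix $y^0\in\partial\R^n_+$ and work in a neighborhood $U\subset\partial\R^n_+$ of $y^0$ of diameter at most $1$. The task reduces to producing a $y$-independent integrable majorant for
\[
\bigl|\partial_{y_i}|x-y|^{\alpha-n}\bigr|\,\frac{v^\kappa(x)}{|x-x^o|^{s_2}}
\le (\alpha-n)\,|x-y|^{\alpha-n-1}\,\frac{v^\kappa(x)}{|x-x^o|^{s_2}},\qquad y\in U.
\]
I split $\R^n_+$ into a near region $\{|x-y^0|\le R\}$ and a far region $\{|x-y^0|>R\}$, with $R$ large enough that $U\subset B_{R/2}(y^0)$. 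In the near region, Lemma \ref{lm-UHS-mmp-es}(III) gives $v^\kappa(x)\le C$, and $|x-x^o|^{-s_2}$ is bounded because $x^o=(0,-2)$ stays away from this region; the majorant then collapses to $C|x-y|^{\alpha-n-1}$, which is locally $L^1$ on $\R^n$ since $\alpha-n-1>-n$. In the far region, $|x-y|\sim|x-y^0|$ uniformly in $y\in U$, so considering separately $\alpha-n-1\ge 0$ and $\alpha-n-1<0$ gives $|x-y|^{\alpha-n-1}\le C(1+|x|^{\alpha-n})$, and Lemma \ref{lm-UHS-mmp-es}(I) supplies the integrability of $(1+|x|^{\alpha-n})v^\kappa(x)/|x-x^o|^{s_2}$. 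Dominated convergence then yields the existence of $\partial_{y_i}u$ at $y^0$.

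The argument for $v$ is entirely parallel: fix $x^0\in\overline{\R^n_+}$ and integrate over $\partial\R^n_+$. Lemma \ref{lm-UHS-mmp-es}(III) controls $u^\theta$ from above on bounded subsets of $\partial\R^n_+$, and Lemma \ref{lm-UHS-mmp-es}(I) handles the tail. The one point that requires a moment's attention is the local integrability of $|y-x^0|^{\alpha-n-1}$ on the $(n-1)$-dimensional $\partial\R^n_+$, which needs $\alpha-n-1>-(n-1)$, i.e.\ $\alpha>2$; this is ensured by the standing hypothesis $\alpha>n$ (with $n\ge 2$). The main obstacle is the simultaneous handling of the mild derivative singularity $|x-y|^{\alpha-n-1}$ in the regime $n<\alpha<n+1$ and the polynomial growth of $v^\kappa$ and $u^\theta$ at infinity; both are absorbed cleanly by the a priori estimates (I) and (III) of Lemma \ref{lm-UHS-mmp-es}.
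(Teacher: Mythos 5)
Your overall strategy mirrors the paper's: split into a near region and a far region, apply the bounds from Lemma~\ref{lm-UHS-mmp-es}~(\Rmnum 1) and (\Rmnum 3), and appeal to dominated convergence. The far-region argument is fine, and so is the near region when $\alpha-n\ge 1$ (there the derivative $\partial_{y_i}|x-y|^{\alpha-n}$ is bounded on compacta uniformly in $y$). But there is a genuine gap in exactly the delicate regime $n<\alpha<n+1$, which is also the case the paper has to treat separately.

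The problem is this: you announce that the task is ``to produce a $y$-independent integrable majorant,'' and then the majorant you exhibit in the near region is $C|x-y|^{\alpha-n-1}$, which is \emph{not} $y$-independent --- its singularity sits at $x=y$ and therefore moves as $y$ ranges over $U$. For dominated convergence you need a single fixed $L^1$ function dominating either $\sup_{y\in U}\bigl|\partial_{y_i}|x-y|^{\alpha-n}\bigr|$ or the family of difference quotients, and the observation ``$|x-y|^{\alpha-n-1}$ is locally $L^1$ since $\alpha-n-1>-n$'' only records that \emph{for each fixed $y$} the function of $x$ is integrable; it does not produce a uniform dominant. The same confusion recurs in the parallel argument for $v$, where you argue about local integrability of $|y-x^0|^{\alpha-n-1}$ on $\partial\R^n_+$ --- again a pointwise statement, not a uniform one --- and if you did try to repair it by taking $\sup$ over $x^0$ on a segment in $\partial\R^n_+$, the resulting function fails to be integrable for small dimensions (e.g.\ $n=2$, where $\partial\R^n_+$ is one-dimensional and the segment has full measure). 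So the route via a supremum cannot be the right way to close the gap.

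The paper resolves this with the elementary inequality $|a_1^\beta-a_2^\beta|\le a_2^{\beta-1}|a_1-a_2|$ for $0<\beta<1$, applied to the difference quotient directly: with $a_2=|x-y^0|$ fixed and $\beta=\alpha-n$, this gives
\[
\left|\frac{|x-y|^{\alpha-n}-|x-y^0|^{\alpha-n}}{y_1-y_1^0}\right|\le C\,|x-y^0|^{\alpha-n-1},
\]
with the singularity pinned at the \emph{fixed} point $y^0$, hence a genuine $y$-independent dominant that is integrable on $B(0,2R)\cap\R^n_+$ precisely because $\alpha>n$. Your write-up is missing exactly this step; without it (or some equivalent device), the interchange of derivative and integral in the near region is not justified when $n<\alpha<n+1$.
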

\begin{proof}
We first show that  $u(y), y\in \partial \R^n_+$ and $v(x), x\in \overline{\R^n_+}$ are continuous.

It is easy to see that for any $y^0\in \partial \R^n_+, y\in B(y^0, \delta)\cap \partial \R^n_+$ with $\delta>0$ small enough,
$$u(y)\le C(y^0,\delta)\int_{\R^n_+} (1+|x|^{\alpha-n})\frac{v^\kappa(x)}{|x-x^o|^{s_2}}dx.$$
By using (\Rmnum 1) in Lemma \ref{lm-UHS-mmp-es} and Lebesgue Dominated Convergence Theorem, $u(y)$ is continuous in $y^0$.
Similarly, we can show that  $v(x), x\in \overline{\R^n_+}$ is continuous.

\smallskip

Next,  we will show that $u(y)$ is differentiable in $y_i, i=1,2,\cdots, n-1$, $v(x)$ is differentiable in $x_i, i=1,2,\cdots,n$.

Take $y\in \partial \R^n_+$ and $R>0$ large enough such that $y\in B(0, R)\cap \partial \R^n_+$ and write
\begin{eqnarray*}u(y)
&=&\int_{\R^n_+}\frac{v^\kappa(x)}{|x-y|^{n-\alpha}}
\frac{1}{|x-x^o|^{s_2}}dx\\
&=&\int_{B(0,2R)\cap \R^n_+}\frac{v^\kappa(x)}{|x-y|^{n-\alpha}}
\frac{1}{|x-x^o|^{s_2}}dx+\int_{\R^n_+\setminus B(0,2R)}\frac{v^\kappa(x)}{|x-y|^{n-\alpha}}
\frac{1}{|x-x^o|^{s_2}}dx\\
&:=&I_1(y)+I_2(y).
\end{eqnarray*}
It is easy to see that $I_2(y)$ is differentiable in $y_i, i=1,2,\cdots,n-1$. Now we prove $I_1(y)$ is differentiable in $y_1.$

If $\alpha-n\ge 1$, for fixed $y^0=(y_1^0,y_2^0,\cdots,y_{n-1}^0,0)\in B(0, R)\cap \partial \R^n_+$ and $y=(y_1,y_2^0,\cdots,y_{n-1}^0,0)\in B(0, R)\cap \partial \R^n_+$ with $y_1\neq y_1^0$, we have
$$\bigg|\frac{|x-y|^{\alpha-n}-|x-y^0|^{\alpha-n}}{y_1-y_1^0}\frac{v^\kappa(x)}{|x-x^o|^{s_2}}\bigg|\le C\frac{v^\kappa(x)}{|x-x^o|^{s_2}},~\forall~ x\in B(0, 2R)\cap \R^n_+,$$
where $C$ is independent of $x,y$.
Thus Lebesgue Dominated Convergence Theorem implies that $\frac{\partial I_1(y)}{\partial y_1}\mid _{y=y^0}$ exists.

If $0<\alpha-n<1$, we need the following simple inequality:
$$|a_1^\beta-a_2^\beta|\le a_2^{\beta-1}|a_1-a_2|$$
for any $a_1,a_2>0, 0<\beta<1$. Thus for fixed $y^0=(y_1^0,y_2^0,\cdots,y_{n-1}^0,0)\in B(0, R)\cap \partial \R^n_+$ and $y=(y_1,y_2^0,\cdots,y_{n-1}^0,0)\in B(0, R)\cap \partial \R^n_+$ with $y_1\neq y_1^0$, we have
$$\bigg|\frac{|x-y|^{\alpha-n}-|x-y^0|^{\alpha-n}}{y_1-y_1^0}\frac{v^\kappa(x)}{|x-x^o|^{s_2}}\bigg|\le C|x-y^0|^{\alpha-n-1}\frac{v^\kappa(x)}{|x-x^o|^{s_2}},~\forall~ x\in B(0, 2R)\cap \R^n_+,$$
where $C$ is independent of $x,y$. It is easy to see that
$$\int_{B(0,2R)\cap \R^n_+} |x-y^0|^{\alpha-n-1}\frac{v^\kappa(x)}{|x-x^o|^{s_2}}dx<\infty.$$
Thus Lebesgue Dominated Convergence Theorem implies that $\frac{\partial I_1(y)}{\partial y_1}\mid _{y=y^0}$ exists.

Similar arguments yield that $\frac{\partial I_1(y)}{\partial y_i}\mid _{y=y^0}$ exists for $i=2, \cdots, n-1$, as well as
 $v(x)$ is differentiable in $x_i, i=1,2,\cdots,n$.
\end{proof}

\begin{lm}\label{estD-UHS-uv}
Assume that $\alpha>n$ and $\theta, \kappa<0$.  Let $(u,v)$ be a pair of positive Lebesgue measurable  solutions to system \eqref{equ2-UHS-sub}. Then 
we have
\begin{eqnarray}\label{estD1-UHS-uv}
0<a(\alpha-n)&=&\lim\limits_{|y|\to +\infty} |y|^{n-\alpha}\big(\nabla u(y)\cdot y+\frac{n-\alpha}{2}u(y)\big),\\\label{estD2-UHS-uv}
0<b(\alpha-n)&=&\lim\limits_{|x|\to +\infty} |x|^{n-\alpha}\big(\nabla v(x)\cdot x+\frac{n-\alpha}{2}v(x)\big),\end{eqnarray}
where $a,b$ are the same numbers in  Lemma \ref{lm-UHS-mmp-es}.
\end{lm}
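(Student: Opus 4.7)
The plan is to differentiate the integral representation of $u$ under the integral sign, collapse the result via an algebraic identity into a single cleaner integral, and then pass to the limit by Lebesgue dominated convergence. First, differentiating the first equation of \eqref{equ2-UHS-sub} in $y_i$ (the exchange of differentiation and integration being justified exactly as in the proof of Lemma \ref{lm-UHS-regu}) gives
\[
\nabla u(y)\cdot y=(\alpha-n)\int_{\R^n_+}\frac{(y-x)\cdot y}{|x-y|^{n+2-\alpha}}\,\frac{v^\kappa(x)}{|x-x^o|^{s_2}}\,dx.
\]
The elementary identity $(y-x)\cdot y-\tfrac12|x-y|^2=\tfrac12(|y|^2-|x|^2)$ then collapses $\nabla u(y)\cdot y+\tfrac{n-\alpha}{2}u(y)$ into
\[
\frac{\alpha-n}{2}\int_{\R^n_+}\frac{|y|^2-|x|^2}{|x-y|^{n+2-\alpha}}\,\frac{v^\kappa(x)}{|x-x^o|^{s_2}}\,dx.
\]

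Next I would compute the pointwise limit of the rescaled integrand. For each fixed $x\in\R^n_+$, since $|x-y|/|y|\to 1$ and $(|y|^2-|x|^2)/|y|^2\to 1$ as $|y|\to\infty$, the factor $|y|^{n-\alpha}(|y|^2-|x|^2)|x-y|^{\alpha-n-2}$ converges to $1$, so the rescaled integrand converges to $v^\kappa(x)/|x-x^o|^{s_2}$. If this limit can be moved inside the integral, part (\Rmnum{2}) of Lemma \ref{lm-UHS-mmp-es} identifies the limit as a positive multiple of $a=\int_{\R^n_+}v^\kappa(x)|x-x^o|^{-s_2}\,dx$, giving \eqref{estD1-UHS-uv}.

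The main technical step — and what I expect to be the principal obstacle — is producing an integrable majorant uniform in $y$. Using $\big||y|^2-|x|^2\big|\le(|y|+|x|)|y-x|$, the rescaled integrand is controlled by $|y|^{n-\alpha}(|y|+|x|)|x-y|^{\alpha-n-1}v^\kappa(x)|x-x^o|^{-s_2}$. I would split $\R^n_+$ into three zones: on $\{|x|\le|y|/2\}$ one has $|x-y|\sim|y|$ and the kernel is uniformly of order $1$; on $\{|x|\ge 2|y|\}$ it is controlled by a multiple of $1+|x|^{\alpha-n}$; on the middle zone $\{|y|/2<|x|<2|y|\}$, where $|x|\sim|y|$ and $|x-y|$ can be small, I would further split by $\{|x-y|\ge 1\}$ and $\{|x-y|<1\}$, using the local integrability of $|x-y|^{\alpha-n-1}$ (valid since $\alpha>n\ge 2>1$) and the boundedness of the weight $v^\kappa(x)|x-x^o|^{-s_2}$ on compact subsets of $\overline{\R^n_+}$ (from Lemma \ref{lm-UHS-mmp-es}(\Rmnum{3}) together with continuity). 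Combining these three zone-bounds with the integrability furnished by Lemma \ref{lm-UHS-mmp-es}(\Rmnum{1}) supplies the required majorant and legitimizes the passage to the limit. The estimate \eqref{estD2-UHS-uv} for $v$ is proved by the same argument applied to the second equation of \eqref{equ2-UHS-sub}, with the roles of $u$, $v$ and of $\partial\R^n_+$, $\R^n_+$ exchanged.
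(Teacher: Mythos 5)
Your strategy coincides with the paper's: differentiate under the integral sign, use $(y-x)\cdot y-\tfrac12|x-y|^2=\tfrac12(|y|^2-|x|^2)$ to collapse the expression, and pass to the limit via Lebesgue dominated convergence. The paper simply asserts the pointwise bound $|y|^{n-\alpha}|x-y|^{\alpha-n-2}\bigl||y|^2-|x|^2\bigr|\le C(1+|x|^{\alpha-n})$ for $|y|\ge 1$ and invokes DCT; you rightly notice that this pointwise bound fails near the diagonal when $n<\alpha<n+1$ (approaching $y\to x$ radially makes $|y|^{n-\alpha}|x-y|^{\alpha-n-1}(|x|+|y|)$ blow up), so a genuine $y$-independent majorant does not exist over the whole half-space, and some splitting is necessary.

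Where the proposal goes wrong is in the middle zone. You argue that ``boundedness of the weight $v^\kappa(x)|x-x^o|^{-s_2}$ on compact subsets of $\overline{\R^n_+}$'' controls the contribution from $\{|y|/2<|x|<2|y|,\ |x-y|<1\}$, but this zone recedes to infinity with $y$ and never lies in a fixed compact set; mere boundedness of the weight yields only an estimate $O(|y|^{n-\alpha+1})$ on the near-diagonal integral, which does \emph{not} tend to zero when $\alpha<n+1$. What the argument actually requires is the decay of the weight for $|x|$ large: from Lemma~\ref{lm-UHS-mmp-es}~(\Rmnum{3}) one has $v(x)\ge C^{-1}(1+|x|^{\alpha-n})$, so (since $\kappa<0$) $v^\kappa(x)\lesssim|x|^{\kappa(\alpha-n)}$ for $|x|\ge1$; combined with $|x-x^o|^{-s_2}\lesssim|x|^{-s_2}$ and the identities $\kappa=t'-1$, $s_2=(\alpha-n)t'+2n$, this gives $v^\kappa(x)|x-x^o|^{-s_2}\lesssim|x|^{\kappa(\alpha-n)-s_2}=|x|^{-(\alpha+n)}$. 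With that decay the near-diagonal contribution is $O(|y|^{n-\alpha+1-(\alpha+n)})=O(|y|^{1-2\alpha})\to0$. Once this error term is shown to vanish, DCT on $\{|x-y|\ge1\}$ (where the pointwise bound does hold and Lemma~\ref{lm-UHS-mmp-es}~(\Rmnum{1}) supplies integrability) completes the proof --- and in fact your three-way zone split can be replaced by the single split along $\{|x-y|\ge1\}$.
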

\begin{proof}
We only prove \eqref{estD1-UHS-uv}. By using Lemma \ref{lm-UHS-regu},   letting $|y|\ge 1$
\begin{eqnarray*}
&&\bigg||y|^{n-\alpha}\big(\nabla u(y)\cdot y+\frac{n-\alpha}{2}u(y)\big)\bigg|\\
&=&(\alpha-n)\bigg||y|^{n-\alpha}\int_{\R^n_+} |x-y|^{\alpha-n-2}\big(\sum_{i=1}^n(y_i-x_i)y_i-\frac{|x-y|^2}{2}\big)\frac{v^\kappa(x)}{|x-x^o|^{s_2}}dx\bigg|\\
&=&(\alpha-n)|y|^{n-\alpha}\int_{\R^n_+} |x-y|^{\alpha-n-2}||y|^2-|x|^2|\frac{v^\kappa(x)}{|x-x^o|^{s_2}}dx\\
&\le&C(\alpha-n)\int_{\R^n_+} (1+|x|^{\alpha-n})\frac{v^\kappa(x)}{|x-x^o|^{s_2}}dx.
\end{eqnarray*}
Thus Lebesgue Dominated Convergence Theorem implies that
$$\lim\limits_{|y|\to +\infty} |y|^{n-\alpha}(\nabla u(y)\cdot y-\frac{u(y)}{2})=(\alpha-n)\int_{\R^n_+} \frac{v^\kappa(x)}{|x-x^o|^{s_2}}dx=a(\alpha-n).$$
\end{proof}

Next, we shall establish the symmetric properties about the  positive, measurable solutions to system \eqref{equ2-UHS-sub}.
\begin{prop}\label{thm-sym-UHS-sub}
Let $(u,v)$ be a pair of positive Lebesgue measurable solutions to system \eqref{equ2-UHS-sub}.
For $\alpha>n$, if $0<p<\frac{2(n-1)}{n+\alpha-2}, 0<t<\frac{2n}{n+\alpha}$, then $u(y), y\in \partial \R^n_+$ and $v(x), x\in \R^n_+$ must be symmetric with respect to $x_n-$axis.
\end{prop}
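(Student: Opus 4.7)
The plan is to apply the method of moving planes to the coupled integral system \eqref{equ2-UHS-sub} in an arbitrary horizontal direction. Since the weights $|x-x^o|^{-s_2}$ and $|y-x^o|^{-s_1}$ are invariant under rotations about the $x_n$-axis (because $x^o=(0,\dots,0,-2)$ lies on this axis), for any unit vector $e\in\R^{n-1}\times\{0\}$ the reflection through $\{x:e\cdot x=0\}$ preserves the system; so it suffices to fix $e=e_1$, prove symmetry of $(u,v)$ about $\{x_1=0\}$, and then conclude rotational symmetry by applying the same argument to each such $e$.

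For $\lambda\le 0$ set $\Sigma_\lambda=\{x\in\R^n_+:x_1<\lambda\}$, $\Sigma'_\lambda=\{y\in\partial\R^n_+:y_1<\lambda\}$, $x^\lambda=(2\lambda-x_1,x_2,\dots,x_n)$, $u_\lambda(y)=u(y^\lambda)$, $v_\lambda(x)=v(x^\lambda)$. First I would change variables $x\mapsto x^\lambda$ in \eqref{equ2-UHS-sub}, split $\R^n_+$ into $\Sigma_\lambda$ and its reflection, and derive the key identity
\[
u(y)-u_\lambda(y)=\int_{\Sigma_\lambda}\Big[\tfrac{v^\kappa(x)}{|x-x^o|^{s_2}}-\tfrac{v_\lambda^\kappa(x)}{|x^\lambda-x^o|^{s_2}}\Big]\Big[|x-y|^{\alpha-n}-|x-y^\lambda|^{\alpha-n}\Big]\,dx
\]
for $y\in\Sigma'_\lambda$, together with the analogous identity for $v-v_\lambda$. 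For $x\in\Sigma_\lambda$ and $\lambda\le 0$ one computes $|x-x^o|^2-|x^\lambda-x^o|^2=4\lambda(x_1-\lambda)\ge 0$, so $|x-x^o|^{-s_2}\le|x^\lambda-x^o|^{-s_2}$, while $|x-y|<|x-y^\lambda|$ makes the second bracket strictly negative (using $\alpha>n$). Combined with $\kappa<0$, the identity yields the coupled comparison: $v\ge v_\lambda$ on $\Sigma_\lambda$ implies $u\ge u_\lambda$ on $\Sigma'_\lambda$, and vice versa by the twin identity for $v$.

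I would next verify the starting position: for $\lambda$ sufficiently negative both inequalities hold without invoking the bootstrap. Here the sharp asymptotics $u(y)=a|y|^{\alpha-n}+o(|y|^{\alpha-n})$ with $a>0$ from Lemma \ref{lm-UHS-mmp-es}, together with the derivative-type expansion of Lemma \ref{estD-UHS-uv} and their $v$-counterparts, come into play. For $y\in\Sigma'_\lambda$ with $\lambda\to-\infty$ both $|y|$ and $|y^\lambda|$ tend to infinity, and $|y|^2-|y^\lambda|^2=-4\lambda(\lambda-y_1)>0$ combined with these expansions will force $u(y)>u_\lambda(y)$ uniformly on $\Sigma'_\lambda$; the analogous asymptotic argument gives $v>v_\lambda$ on $\Sigma_\lambda$.

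Finally, set $\lambda_0=\sup\{\lambda\le 0:u\ge u_\mu\text{ on }\Sigma'_\mu\text{ and }v\ge v_\mu\text{ on }\Sigma_\mu\text{ for all }\mu\le\lambda\}$ and aim to prove $\lambda_0=0$. At $\lambda_0$ both weak inequalities persist by continuity (Lemma \ref{lm-UHS-regu}); a strong-maximum-type analysis of the key identity then produces a dichotomy. Either the inequalities are strict on $\Sigma'_{\lambda_0}$ and $\Sigma_{\lambda_0}$, in which case the decay of the weight at infinity allows me to push $\lambda$ slightly past $\lambda_0$, contradicting maximality; or $u\equiv u_{\lambda_0}$ and $v\equiv v_{\lambda_0}$, in which case substituting the equality back into \eqref{equ2-UHS-sub} and exploiting the non-invariance of $|x-x^o|^{-s_2}$ under $x\mapsto x^\lambda$ forces $\lambda_0=0$. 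A mirror moving-plane argument from $\lambda=+\infty$ down to $\lambda=0$ yields the complementary inequality, establishing $u(y)=u_0(y)$ and $v(x)=v_0(x)$. I expect the main obstacle to be this final step: the non-invariance of $|x-x^o|^{-s_2}$ under arbitrary reflections is exactly what pins the critical plane to $\{x_1=0\}$, but it also rules out the standard reflection-invariance shortcut, so the dichotomy must be decided by a careful direct analysis of the integral identity rather than classical maximum-principle machinery.
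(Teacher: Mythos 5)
Your overall plan matches the paper's: the same moving-plane scheme in the $x_1$-direction, the same difference identity for $u-u_\lambda$ and $v-v_\lambda$ (your identity is a rearrangement of the one in Lemma~\ref{lm-UHS-mmp-UV}, using $|x_\lambda-y|=|x-y_\lambda|$), the same sign analysis exploiting $\kappa,\theta<0$ together with $|x-x^o|\ge|x_\lambda-x^o|$ for $\lambda\le 0$, and the same reduction from rotational symmetry to a single reflection. However, your starting-position step contains a genuine error. You assert that for $y\in\Sigma'_\lambda$ with $\lambda\to-\infty$ both $|y|$ and $|y^\lambda|$ tend to infinity, and then propose to compare $u(y)$ with $u_\lambda(y)$ purely via the asymptotic expansions at infinity. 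This is false: if $y_1$ is near $2\lambda$ then $y^\lambda=(2\lambda-y_1,y_2,\dots,y_{n-1},0)$ can have $|y^\lambda|$ bounded (indeed $2\lambda-y_1$ can be $0$). In this regime $u_\lambda(y)=u(y^\lambda)$ is not controlled by the tail asymptotics of $u$, and the comparison must be made through the two-sided pointwise bounds $C_1^{-1}(1+|z|^{\alpha-n})\le u(z)\le C_1(1+|z|^{\alpha-n})$ of Lemma~\ref{lm-UHS-mmp-es}\,(\Rmnum{3}). This is exactly the case split the paper performs in Lemma~\ref{lm-UHS-mmp-start}: the radial-monotonicity argument (derived from Lemma~\ref{estD-UHS-uv}) covers the subregion where both $|y_1|\ge-\lambda_0$ \emph{and} $|2\lambda-y_1|\ge-\lambda_0$, while the subregion $|2\lambda-y_1|\le-\lambda_0$ is handled by the crude two-sided bounds. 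Without that split your starting lemma does not hold.

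Your closing step is also lighter than what the argument actually requires. The step that pins $\overline{\lambda}=0$ (Lemma~\ref{lm-UHS-mmp-infy} in the paper) is not a soft maximum-principle dichotomy: the rigidity alternative $u\equiv u_{\overline\lambda}$ is ruled out immediately --- it forces $v\equiv v_{\overline\lambda}\equiv 0$ via the strict inequalities $|x_{\overline\lambda}-y|^{\alpha-n}>|x-y|^{\alpha-n}$ and $|x_{\overline\lambda}-x^o|<|x-x^o|$ (for $\overline\lambda<0$), contradicting positivity --- and the remaining work is a careful two-step quantitative estimate ($u-u_\lambda\gtrsim|y|^{\alpha-n-1}$ far from $T_{\overline\lambda}$, followed by a perturbative estimate in the thin slab $\overline\lambda-1\le y_1\le\lambda$) to push past $\overline\lambda$. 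Your proposal correctly locates the role of the non-invariance of the weight but does not supply these estimates; as written it is a plausible sketch rather than a proof of this step.
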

We shall relegate the proof late.  But first, from this, we shall obtain the extremal functions and the best constant for the reversed HLS in a ball with subcrtical exponents.

\begin{prop}\label{thm-sol-UHS-sub}
For $\alpha>n$, if $0<p<\frac{2(n-1)}{n+\alpha-2}, 0<t<\frac{2n}{n+\alpha}$, then the positive solution to \eqref{equ-sub}  must be constant function. Thus $\xi_\alpha$ is given by \eqref{9-19-1}.

\end{prop}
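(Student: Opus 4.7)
The strategy is to combine Proposition \ref{thm-sym-UHS-sub} with the rotational symmetry of the spherical problem on $\partial B^n$ to force any positive solution of \eqref{equ-sub} to be constant, and then read off $\xi_\alpha$ by a direct computation.

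First, I would transfer the conclusion of Proposition \ref{thm-sym-UHS-sub} from the upper half space back to the ball. Under the conformal map $T_{x^o}$ the $x_n$-axis of $\R^n_+$ corresponds to the vertical axis through $z^o$, and the weights $|y-x^o|^{-s_1}$, $|x-x^o|^{-s_2}$ appearing in \eqref{equ2-UHS-sub} are invariant under rotations fixing this axis because the pole $x^o=(0,\ldots,0,-2)$ itself lies on it. Consequently, the $x_n$-axis symmetry of $(u,v)$ on $\R^n_+$ is equivalent to rotational invariance of the original $f$ about the vertical axis through $z^o$.

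Next, I would upgrade this axial symmetry to full rotational invariance on $\partial B^n$ by exploiting the geometric symmetry of \eqref{equ-sub} itself. That equation involves only the Euclidean distance and the round surface measure on $\partial B^n$, so it is preserved by every $R\in O(n)$ fixing $z^o$. Therefore, for any such $R$, the function $f_R(z):=f(Rz)$ is again a positive solution of \eqref{equ-sub}, and by the previous paragraph $f_R$ must be symmetric about the vertical axis through $z^o$; equivalently, $f$ is symmetric about the axis $R(\text{vertical})$. Since $R$ can be chosen to map the vertical axis onto any prescribed line through $z^o$, $f$ is invariant under rotations about every axis through $z^o$. The group generated by these rotations is the full stabilizer of $z^o$ in $O(n)$, which acts transitively on $\partial B^n$, whence $f\equiv c$ for some constant $c>0$.

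Finally, the normalization $\|f\|_{L^p(\partial B^n)}=1$ combined with $|\partial B^n|=n\omega_n$ pins down $c=(n\omega_n)^{-1/p}$. Substituting into $E_\alpha f(\xi)=c\int_{\partial B^n}|\xi-\zeta|^{\alpha-n}\,d\zeta$ and computing $\|E_\alpha f\|_{L^{t^\prime}(B^n)}$ then yields formula \eqref{9-19-1}. The main delicate step in this plan is the symmetry transfer from $(u,v)$ on $\R^n_+$ to $f$ on $\partial B^n$: one must check that the chain of substitutions $F\mapsto\overline u\mapsto u$ together with the inversion $T_{x^o}$ respects $SO(n-1)$-equivariance exactly, which reduces to the observation that $x^o$ lies on the $x_n$-axis so that the inversion and all weight factors commute with rotations fixing that axis.
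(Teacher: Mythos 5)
Your proposal is correct and follows essentially the same route as the paper's own proof. The paper obtains full rotational invariance by rotating the pole $x^o$ to an arbitrary point $x^1\in\partial B^n$ equidistant from a chosen pair $y^1,y^2$ and then reapplying Proposition \ref{thm-sym-UHS-sub}, which is precisely your ``axial symmetry about every axis through $z^o$, hence constancy by transitivity'' step phrased with a concrete pair of test points.
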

\begin{proof}
From Proposition \ref{thm-sym-UHS-sub} we know that $f(y^1)=f(y^2)$ if $|y^1-x^o|=|y^2-x^o|$, $y^1, y^2\in \partial B^n,$ where we recall $x^o=(0,-2)$. But actually we can prove $f(y^1)=f(y^2)$ for any $y^1, y^2\in \partial B^n.$

In fact, for any two different points $y^1, y^2\in \partial B^n$, we can choose $x^1\in \partial B^n$ such that $|y^1-x^1|=|y^2-x^1|.$ There exists a rotation transformation $\cal{T}$, such that $x^o=\cal{T}x^1$.
We use the transformation (from $z\in B^n$ to $y\in \R^n_+$)
$$y=\widehat{z^{x^1}}:=\frac{2^2(z^\prime-x^o)}{|z^\prime-x^o|^2}+x^o,$$
where $z\in B^n, z^\prime=\cal{T} z$.
Under this transformation, equation \eqref{equ-sub} will be changed to the same system \eqref{equ2-UHS-sub}. Thus we conclude $f(y^1)=f(y^2)$ since $|y^1-x^1|=|y^2-x^1|$.

The infimum can be computed easily.
\end{proof}




\subsection{Symmetry via the method of moving planes}
Now we prove Proposition \ref{thm-sym-UHS-sub} via the method of moving planes.

Denote $x=\{x_1,x_2,\cdots,x_n\}\in\R^n$. Let $$T_\lambda=\{x\in\R^n|x_1=\lambda\},x_\lambda=\{2\lambda-x_1,x_2,\cdots,x_n\},$$
$$\Sigma_{\lambda,n}=\{x\in\R^n_+|x_1\le\lambda\}, \Sigma_{\lambda,n-1}=\{x\in\partial \R^n_+|x_1\le\lambda\},$$
and denote
$$u_\lambda (y):=u(y_\lambda),~\mbox{for}~y\in \partial\R^n_+,$$
$$v_\lambda (x):=v(x_\lambda),~\mbox{for}~x\in \R^n_+.$$


\begin{lm}\label{lm-UHS-mmp-UV}
Let $(u,v)$ be a pair of positive solutions to system \eqref{equ2-UHS-sub}. Then we have
\begin{equation}\label{UHS-mmp-U}
u(y)-u_\lambda (y)=\int_{\Sigma_{\lambda,n}}(|x_\lambda-y|^{\alpha-n}-|x-y|^{\alpha-n})(\frac{v_\lambda^\kappa(x)}{|x_\lambda-x^o|^{s_2}}-\frac{v^\kappa(x)}{|x-x^o|^{s_2}})dx, ~y\in\partial \R^n_+,
 \end{equation}
and
\begin{equation}\label{UHS-mmp-V}
v(x)-v_\lambda (x)=\int_{\Sigma_{\lambda,n-1}}(|y_\lambda-x|^{\alpha-n}-|y-x|^{\alpha-n})(\frac{u_\lambda^\theta(y)}{|y_\lambda-x^o|^{s_1}}-\frac{u^\theta(y)}{|y-x^o|^{s_1}})dy,~x\in \R^n_+.
\end{equation}
\end{lm}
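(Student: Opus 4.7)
The proof is a direct computation in the spirit of the usual moving-plane identity, with one twist: because $x^o=(0,\ldots,0,-2)$ lies on the $x_n$-axis but generally not on $T_\lambda$, the weight $|x-x^o|^{-s_2}$ is not invariant under the reflection $x\mapsto x_\lambda$, so the reflected weight $|x_\lambda-x^o|^{-s_2}$ must be tracked explicitly. This is what forces the right-hand side of \eqref{UHS-mmp-U} to contain the \emph{difference} $v_\lambda^\kappa(x)|x_\lambda-x^o|^{-s_2}-v^\kappa(x)|x-x^o|^{-s_2}$ rather than a single $v^\kappa$ factor.

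The plan for \eqref{UHS-mmp-U} is as follows. Starting from the first equation of \eqref{equ2-UHS-sub}, split the domain of integration as $\R^n_+=\Sigma_{\lambda,n}\cup\Sigma_{\lambda,n}^c$, and on $\Sigma_{\lambda,n}^c$ perform the change of variables $x\mapsto x_\lambda$. Since reflection across $T_\lambda$ is an isometry with Jacobian one, one has $|x_\lambda-y|=|x-y_\lambda|$, $v(x_\lambda)=v_\lambda(x)$, and the weight transforms to $|x_\lambda-x^o|^{-s_2}$. This gives
\[
u(y)=\int_{\Sigma_{\lambda,n}}|x-y|^{\alpha-n}\frac{v^\kappa(x)}{|x-x^o|^{s_2}}\,dx+\int_{\Sigma_{\lambda,n}}|x_\lambda-y|^{\alpha-n}\frac{v_\lambda^\kappa(x)}{|x_\lambda-x^o|^{s_2}}\,dx.
\]
Applying the same splitting and change of variables to $u_\lambda(y)=u(y_\lambda)$, and using $|x-y_\lambda|=|x_\lambda-y|$ together with $|x_\lambda-y_\lambda|=|x-y|$, one obtains
\[
u_\lambda(y)=\int_{\Sigma_{\lambda,n}}|x_\lambda-y|^{\alpha-n}\frac{v^\kappa(x)}{|x-x^o|^{s_2}}\,dx+\int_{\Sigma_{\lambda,n}}|x-y|^{\alpha-n}\frac{v_\lambda^\kappa(x)}{|x_\lambda-x^o|^{s_2}}\,dx.
\]
Subtracting these two identities and grouping the four terms by the sign of $|x-y|^{\alpha-n}-|x_\lambda-y|^{\alpha-n}$ produces exactly \eqref{UHS-mmp-U}.

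The proof of \eqref{UHS-mmp-V} is completely analogous, starting from the second equation of \eqref{equ2-UHS-sub} on $\partial\R^n_+$, splitting the domain as $\Sigma_{\lambda,n-1}\cup\Sigma_{\lambda,n-1}^c$, and reflecting $y\mapsto y_\lambda$ on the part where $y_1>\lambda$. The only bookkeeping difference is that now the $(n-1)$-dimensional reflection leaves $y_n=0$, so the reflection preserves $\partial\R^n_+$.

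The only technical point one must verify is that all the integrals appearing in the splitting converge, so that the rearrangement is legitimate. This is guaranteed by part (I) of Lemma \ref{lm-UHS-mmp-es}, which gives $\int_{\R^n_+}(1+|x|^{\alpha-n})v^\kappa(x)|x-x^o|^{-s_2}dx<\infty$ and the analogous bound on $\partial\R^n_+$; these bounds dominate both $|x-y|^{\alpha-n}$ and $|x_\lambda-y|^{\alpha-n}$ uniformly for $y$ in a compact set, so Fubini/additivity of the integral applies without issue. No genuine obstacle arises; the lemma is essentially a bookkeeping identity whose structure encodes how the non-reflection-invariant weight interacts with the moving-plane reflection.
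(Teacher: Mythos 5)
Your proof is correct and uses exactly the decomposition the paper uses: split $\R^n_+$ across $T_\lambda$, reflect the far half so that $v$ becomes $v_\lambda$ and the weight becomes $|x_\lambda-x^o|^{-s_2}$, write the analogous identity for $u_\lambda(y)=u(y_\lambda)$ using $|x-y_\lambda|=|x_\lambda-y|$ and $|x_\lambda-y_\lambda|=|x-y|$, and subtract. Your added justification of the rearrangement via Lemma~\ref{lm-UHS-mmp-es}(I) is a fine extra that the paper leaves implicit.
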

\begin{proof}
We only prove the first one. Direct computation yields
\begin{eqnarray*}
u(y)
&=&\int_{\Sigma_{\lambda,n}}\frac{v^\kappa(x)}{|x-y|^{n-\alpha}}
\frac{1}{|x-x^o|^{s_2}}dx+\int_{\R^n_+\backslash\Sigma_{\lambda,n}}\frac{v^\kappa(x)}{|x-y|^{n-\alpha}}
\frac{1}{|x-x^o|^{s_2}}dx\\
&=&\int_{\Sigma_{\lambda,n}}\frac{v^\kappa(x)}{|x-y|^{n-\alpha}}
\frac{1}{|x-x^o|^{s_2}}dx+\int_{\Sigma_{\lambda,n}}\frac{v_\lambda^\kappa(\xi)}{|\xi_\lambda-y|^{n-\alpha}}
\frac{1}{|\xi_\lambda-x^o|^{s_2}}d\xi,
\end{eqnarray*}
and
\begin{eqnarray*}
u_\lambda(y)
&=&\int_{\Sigma_{\lambda,n}}\frac{v^\kappa(x)}{|x-y_\lambda|^{n-\alpha}}
\frac{1}{|x-x^o|^{s_2}}dx+\int_{\R^n_+\backslash\Sigma_{\lambda,n}}\frac{v^\kappa(x)}{|x-y_\lambda|^{n-\alpha}}
\frac{1}{|x-x^o|^{s_2}}dx\\
&=&\int_{\Sigma_{\lambda,n}}\frac{v^\kappa(x)}{|x-y_\lambda|^{n-\alpha}}
\frac{1}{|x-x^o|^{s_2}}dx+\int_{\Sigma_{\lambda,n}}\frac{v_\lambda^\kappa(\xi)}{|\xi_\lambda-y_\lambda|^{n-\alpha}}
\frac{1}{|\xi_\lambda-x^o|^{s_2}}d\xi.
\end{eqnarray*}
Notice that $|x_\lambda-y|=|x-y_\lambda|$ and $|x_\lambda-y_\lambda|=|x-y|$,  \eqref{UHS-mmp-U} follows from  above equalities.
\end{proof}

\begin{lm}\label{lm-UHS-mmp-start}
Let $(u,v)$ be a pair of positive Lebesgue measurable solutions to system \eqref{equ2-UHS-sub}. Assume that $\alpha>n$, $0<p<\frac{2(n-1)}{n+\alpha-2}, 0<t<\frac{2n}{n+\alpha}$.  Then, for sufficiently negative $\lambda$, we have
\begin{eqnarray*}
u(y)&\ge & u_\lambda(y),~\forall ~y\in\Sigma_{\lambda,n-1},\\
v(x)&\ge & v_\lambda (x),~\forall ~x\in\Sigma_{\lambda,n}.
\end{eqnarray*}
\end{lm}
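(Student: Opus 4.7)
The plan is to prove both inequalities simultaneously by contradiction, controlling the Lebesgue measure of the ``bad sets''
\begin{equation*}
\Sigma_u^- := \{y \in \Sigma_{\lambda, n-1} : u(y) < u_\lambda(y)\}, \qquad \Sigma_v^- := \{x \in \Sigma_{\lambda, n} : v(x) < v_\lambda(x)\},
\end{equation*}
and showing that both are null once $|\lambda|$ is large enough.

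First I collect two geometric observations. Since $\alpha > n$ and reflection across $T_\lambda$ satisfies $|x_\lambda - y|\ge|x-y|$ for $x\in\Sigma_{\lambda,n}$ and $y\in\Sigma_{\lambda,n-1}$, we have $|x_\lambda - y|^{\alpha - n} \ge |x - y|^{\alpha - n}$. Second, since $\lambda < 0$ the anchor $x^o = (0, -2)$ lies in $\{x_1 > \lambda\}$, and $|x - x^o|^2 - |x_\lambda - x^o|^2 = 4\lambda(x_1 - \lambda) \ge 0$ for $x_1 \le \lambda$ gives $|x_\lambda - x^o|^{-s_2} \ge |x - x^o|^{-s_2}$, with the analogous inequality on $\partial \R^n_+$. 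Decomposing
\begin{equation*}
\frac{v_\lambda^\kappa}{|x_\lambda - x^o|^{s_2}} - \frac{v^\kappa}{|x - x^o|^{s_2}} = \frac{v_\lambda^\kappa - v^\kappa}{|x_\lambda - x^o|^{s_2}} + v^\kappa\left(\frac{1}{|x_\lambda - x^o|^{s_2}} - \frac{1}{|x - x^o|^{s_2}}\right),
\end{equation*}
the second summand is non-negative, while the first has the sign of $v - v_\lambda$ because $\kappa < 0$. Plugging into Lemma \ref{lm-UHS-mmp-UV}, discarding the non-negative contribution from $\Sigma_{\lambda, n} \setminus \Sigma_v^-$, and applying the mean-value bound $v^\kappa - v_\lambda^\kappa \le |\kappa| v^{\kappa - 1}(v_\lambda - v)$ on $\Sigma_v^-$ (together with its $u$-analogue) yields
\begin{align*}
(u_\lambda - u)_+(y) &\le |\kappa| \int_{\Sigma_v^-} |x_\lambda - y|^{\alpha - n}\, \frac{v^{\kappa - 1}(x)\,(v_\lambda - v)_+(x)}{|x_\lambda - x^o|^{s_2}}\, dx, \\
(v_\lambda - v)_+(x) &\le |\theta| \int_{\Sigma_u^-} |y_\lambda - x|^{\alpha - n}\, \frac{u^{\theta - 1}(y)\,(u_\lambda - u)_+(y)}{|y_\lambda - x^o|^{s_1}}\, dy.
\end{align*}

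The smallness for $|\lambda|$ large comes from Lemma \ref{lm-UHS-mmp-es}: the lower bounds on $u$ and $v$ together with $\theta - 1,\, \kappa - 1 < 0$ give $u^{\theta - 1}(y) \le C\, |\lambda|^{(\alpha - n)(\theta - 1)}$ on $\Sigma_{\lambda, n-1}$ and $v^{\kappa - 1}(x) \le C\, |\lambda|^{(\alpha - n)(\kappa - 1)}$ on $\Sigma_{\lambda, n}$, both vanishing as $\lambda \to -\infty$. Applying a weighted Hardy--Littlewood--Sobolev estimate (in the spirit of Dou-Zhu \cite{DZ-2}) to each coupled inequality and then chaining them produces a self-referential bound
\begin{equation*}
\|(u_\lambda - u)_+\|_{L^{p_*}(\Sigma_u^-)} \le C\, |\lambda|^{(\alpha-n)(\kappa+\theta-2)}\, \|(u_\lambda - u)_+\|_{L^{p_*}(\Sigma_u^-)}
\end{equation*}
for a suitable $p_* > 0$. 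Since $(\alpha - n)(\kappa + \theta - 2) < 0$, for $|\lambda|$ sufficiently large the prefactor is $<1$, forcing $(u_\lambda - u)_+ \equiv 0$ a.e.\ on $\Sigma_u^-$; the symmetric argument gives $(v_\lambda - v)_+ \equiv 0$ a.e.\ on $\Sigma_v^-$. The main obstacle will be the exponent bookkeeping: one must choose $p_*$ so that the weighted HLS estimate simultaneously accommodates the fractional kernel $|\cdot|^{\alpha-n}$, the weights $|\cdot - x^o|^{-s_i}$, and the integrability classes $\overline{u} \in L^{\theta+1}$ and $\overline{v} \in L^{t'}$, while still leaving a genuine $\lambda$-smallness factor as $\lambda \to -\infty$.
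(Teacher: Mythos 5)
Your proposal hinges on closing a self-referential inequality by applying ``a weighted Hardy--Littlewood--Sobolev estimate (in the spirit of Dou-Zhu)'' to the coupled bounds for $(u_\lambda-u)_+$ and $(v_\lambda-v)_+$, i.e.\ on having an \emph{upper} bound of the form $\bigl\|\int K(x,y)\,w(x)\,dx\bigr\|_{L^q}\le C\|w\|_{L^r}$ for the kernel $K(x,y)=|x_\lambda-y|^{\alpha-n}|x_\lambda-x^o|^{-s_2}$. In the reversed regime $\alpha>n$ that is precisely the tool that is not available. Corollary \ref{RYW-ball} and its half-space transplant only give a \emph{lower} bound, the exponents $p,t$ lie in $(0,1)$, and hence $p'=\theta+1$ and $t'$ are \emph{negative}, so the spaces $L^{\theta+1}(\partial\R^n_+)$, $L^{t'}(\R^n_+)$ in which $\overline u,\overline v$ live are not Banach spaces and the $L^{p_*}$-contraction you write down has no meaning.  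Moreover the kernel $|x_\lambda-y|^{\alpha-n}$ with $\alpha>n$ is a reversed Riesz potential, unbounded as $|x-y|\to\infty$, and $\Sigma_v^-\subset\Sigma_{\lambda,n}$ is unbounded; absent the weight cancellation that a genuine HLS bound would encode, the right-hand sides of your two displayed inequalities are not a priori finite, so the chaining cannot start.  This is not a gap in your exponent bookkeeping: your mean-value step, the decomposition of the weight, and the smallness $v^{\kappa-1}\le C|\lambda|^{(\alpha-n)(\kappa-1)}\to0$ are all correct. It is the missing $L^p$--$L^q$ operator bound that is fatal. Indeed this is exactly why the paper uses the HLS-contraction argument in Section~\ref{Section 3} (where $\alpha<n$ and Corollary~\ref{RYWT-ball} supplies the upper bounds \eqref{IYWT-UHST-1}--\eqref{IYWT-UHST-2}) but switches to a different method here.

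The paper's actual proof, following Li~\cite{Li} (Lemma~5.4), avoids all $L^p$ machinery and compares $u(y)$ with $u(y_\lambda)$ \emph{pointwise} using (i) the two-sided growth estimate $C^{-1}(1+|y|^{\alpha-n})\le u(y)\le C(1+|y|^{\alpha-n})$ of Lemma~\ref{lm-UHS-mmp-es}(\Rmnum{3}), and (ii) the far-field radial monotonicity $\nabla\bigl(|y|^{(n-\alpha)/2}u(y)\bigr)\cdot y>0$ coming from Lemma~\ref{estD-UHS-uv}. When $|y_1|$ and $|2\lambda-y_1|$ are both large, (ii) and the elementary fact $|y_\lambda|\le|y|$ on $\Sigma_{\lambda,n-1}$ (since $|y|^2-|y_\lambda|^2=-4\lambda(\lambda-y_1)\ge0$ for $\lambda<0$) give $u(y)>u_\lambda(y)$; when $|2\lambda-y_1|$ stays bounded, (i) gives $u(y)\ge C^{-1}|\lambda|^{\alpha-n}$ which dominates the bounded $u(y_\lambda)$ once $|\lambda|$ is large. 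If you want to salvage an integral-inequality route in the reversed case, you would first have to manufacture a usable norm estimate for the reversed kernel, which the literature this paper builds on does not supply.
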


\begin{proof} We shall follow the proof of Lemma 5.4 in \cite{Li}.

By  Lemma \ref{estD-UHS-uv}, we know that  there exists $\lambda_0<0$ sufficiently negative such that, for  $y_1<\lambda_0$,
\begin{eqnarray*}
\nabla\big(|y|^{\frac{n-\alpha}{2}}u(y)\big)\cdot y=|y|^{\frac{n-\alpha}{2}}\big(\nabla u(y)\cdot y+\frac{n-\alpha}{2}u(y)\big)>0.
\end{eqnarray*}

For $y\in\Sigma_{\lambda,n-1}$, if $|y_1|\ge-\lambda_0$ and $|2\lambda-y_1|\ge-\lambda_0$, we then have
$$|y|^{\frac{n-\alpha}{2}}u(y)>|y_\lambda|^{\frac{n-\alpha}{2}}u_\lambda(y),$$
which implies
$$u(y)>u_\lambda(y).$$

On the other hand,  for $y\in\Sigma_{\lambda,n-1}$, if $|2\lambda-y_1|\le -\lambda_0$,  by Lemma \ref{lm-UHS-mmp-es} (\Rmnum{3}),  we can take $|\lambda|$ large enough such that
$$u(y)\ge\frac{1+|y_1|^{\alpha-n}}{C_1}\ge\frac{1+|\lambda|^{\alpha-n}}{C_1}\ge C_1(1+|\lambda_0|^{\alpha-n})\ge C_1(1+|2\lambda-y_1|^{\alpha-n})\ge u_\lambda(y).$$

Similarly, one can get the inequality  for $v(x)$.
\end{proof}
\smallskip

Let
$$\overline{\lambda}=\sup\{\mu<0 \ : \ u(y)\ge u_\lambda(y)~\text{and}~ v(x)\ge v_\lambda (x),~\forall ~y\in\Sigma_{\lambda,n-1},~\forall ~x\in\Sigma_{\lambda,n}, \forall \lambda\in(-\infty, \mu)\}.$$

\begin{lm}\label{lm-UHS-mmp-infy}
$\overline{\lambda}=0$.
\end{lm}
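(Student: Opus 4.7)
The plan is to argue by contradiction: assume $\overline{\lambda}<0$, upgrade the weak moving-plane inequalities at $\lambda=\overline{\lambda}$ to strict ones in the interior, and then use a continuity/compactness argument to push $\overline{\lambda}$ slightly to the right, contradicting maximality.

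The first step is a purely algebraic observation that crucially uses $\overline{\lambda}<0$ together with $x^o_1=0$. A direct expansion gives
\[
|x_\lambda-x^o|^2-|x-x^o|^2 = 4\lambda(\lambda-x_1),
\qquad |y_\lambda-x^o|^2-|y-x^o|^2 = 4\lambda(\lambda-y_1),
\]
so for $\lambda<0$ and $x_1\le \lambda$ (resp.\ $y_1\le\lambda$) one has $|x_\lambda-x^o|\le |x-x^o|$ (resp.\ $|y_\lambda-x^o|\le|y-x^o|$), with strict inequality off the plane $T_\lambda$. Since $s_1,s_2>0$ and $\kappa,\theta<0$, combining this with the defining property of $\overline{\lambda}$ ($v\ge v_{\overline{\lambda}}$, $u\ge u_{\overline{\lambda}}$) shows that the two factors in the integrand of \eqref{UHS-mmp-U} at $\lambda=\overline{\lambda}$ are pointwise nonnegative, and likewise in \eqref{UHS-mmp-V}.

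The second step rules out degeneracy. If $u(y_0)=u_{\overline{\lambda}}(y_0)$ for some interior point $y_0\in\Sigma_{\overline{\lambda},n-1}$ with $y_{0,1}<\overline{\lambda}$, then \eqref{UHS-mmp-U} forces the bracket
\[
\frac{v_{\overline{\lambda}}^\kappa(x)}{|x_{\overline{\lambda}}-x^o|^{s_2}}-\frac{v^\kappa(x)}{|x-x^o|^{s_2}}
\]
to vanish a.e.\ on $\Sigma_{\overline{\lambda},n}$, because $|x_{\overline{\lambda}}-y_0|^{\alpha-n}-|x-y_0|^{\alpha-n}>0$ strictly for $x$ in the interior. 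But equality in this bracket requires simultaneously $v_{\overline{\lambda}}=v$ and $|x_{\overline{\lambda}}-x^o|=|x-x^o|$ (since $v_{\overline{\lambda}}^\kappa\ge v^\kappa$ and $|x_{\overline{\lambda}}-x^o|^{-s_2}\ge |x-x^o|^{-s_2}$ with the product fixed), and the second identity fails for $x_1<\overline{\lambda}<0$ by step~1. Hence $u>u_{\overline{\lambda}}$ strictly on the interior of $\Sigma_{\overline{\lambda},n-1}$, and symmetrically $v>v_{\overline{\lambda}}$ strictly on the interior of $\Sigma_{\overline{\lambda},n}$.

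The third step is the extension: there exists $\varepsilon>0$ such that for every $\lambda\in(\overline{\lambda},\overline{\lambda}+\varepsilon)$ the inequalities $u\ge u_\lambda$ on $\Sigma_{\lambda,n-1}$ and $v\ge v_\lambda$ on $\Sigma_{\lambda,n}$ persist, yielding the desired contradiction to the supremum definition. For points with $|y|,|x|\ge R$ large, I use the sharp decay estimates from Lemma~\ref{lm-UHS-mmp-es}(III) together with the relation $|y|\ge|y_\lambda|$ (respectively $|x|\ge|x_\lambda|$), which holds whenever $\lambda<0$ and $y_1\le\lambda$, exactly as in Lemma~\ref{lm-UHS-mmp-start}; this forces $u\ge u_\lambda$ and $v\ge v_\lambda$ in the far field for all $\lambda$ in a fixed neighborhood of $\overline{\lambda}$. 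For bounded $y,x$ bounded away from the plane $T_{\overline{\lambda}}$, the strict positivity from step~2 together with the continuity of $u,v$ (Lemma~\ref{lm-UHS-regu}) give the inequality for $\lambda$ close to $\overline{\lambda}$. The remaining region—a thin strip near $T_\lambda$—is handled by plugging back into the integral identities \eqref{UHS-mmp-U}--\eqref{UHS-mmp-V}: a hypothetical sequence of counterexamples $y^{(m)}\in\Sigma_{\lambda_m,n-1}$ with $\lambda_m\downarrow\overline{\lambda}$ and $u(y^{(m)})<u_{\lambda_m}(y^{(m)})$ would, after passing to a subsequence, converge to some $y^*$ on $T_{\overline{\lambda}}$, and the resulting inequality on the one-sided derivative $\partial u/\partial y_1(y^*)\le 0$ contradicts the strict monotonicity forced by differentiating \eqref{UHS-mmp-U} along the $y_1$-direction at $\lambda=\overline{\lambda}$.

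The main obstacle is this last piece: since $u$ and $v$ actually \emph{grow} at infinity (as $|y|^{\alpha-n}$, $|x|^{\alpha-n}$) rather than decay, the standard symmetry arguments (designed for decaying solutions) must be modified throughout, and one must carefully combine the asymptotic gradient expansion of Lemma~\ref{estD-UHS-uv} with the integrability from Lemma~\ref{lm-UHS-mmp-es}(I) to handle the narrow strip near $T_\lambda$ uniformly in $\lambda$ close to $\overline{\lambda}$. This is the substitute for the usual Hopf-lemma argument and follows the scheme of Li \cite{Li} and Dou--Zhu \cite{DZ-2}.
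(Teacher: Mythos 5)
Your overall strategy (contradiction, upgrade to strict inequality at $\overline{\lambda}$, then extend $\lambda$ a bit to the right) is exactly the paper's, and your Steps 1 and 2 match the paper's closely; in fact your degeneracy argument in Step 2 is stated slightly more cleanly than the paper's (the paper quickly asserts ``$v \equiv v_{\overline{\lambda}} \equiv 0$,'' whereas you correctly note that equality would force \emph{both} $v \equiv v_{\overline{\lambda}}$ and $|x_{\overline{\lambda}}-x^o|\equiv|x-x^o|$, the latter being impossible for $\overline{\lambda}<0$). Where you genuinely diverge from the paper is in Step 3. The paper first establishes a quantitative lower bound $u - u_\lambda \ge \tfrac{\epsilon_1}2|y|^{\alpha-n-1}$ for all $y_1 \le \overline{\lambda}-1$ (via Fatou's lemma applied to \eqref{UHS-mmp-U}), which it then feeds into a delicate splitting of the integral \eqref{UHS-mmp-U} over several annular and slab regions to control the contribution from the thin strip $\overline{\lambda}-1\le y_1 \le \lambda$; in contrast you propose to handle the thin strip by a compactness/Hopf-lemma surrogate, differentiating \eqref{UHS-mmp-U} along $y_1$ at the plane to deduce $\partial u/\partial y_1 < 0$ there, and contradicting this with a sequence of counterexamples. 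The derivative route is conceptually cleaner, but the paper's quantitative route avoids two pieces of regularity that your route needs and that the paper never establishes: continuity of $\nabla u$ (Lemma~\ref{lm-UHS-regu} gives only existence of partial derivatives) and differentiation under the integral sign in \eqref{UHS-mmp-U}.

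There are also two concrete errors. First, the sign in your thin-strip argument is backwards: the counterexamples $u(y^{(m)})<u_{\lambda_m}(y^{(m)})$, passed through the mean value theorem and the (assumed) continuity of $\partial_1 u$, give $\partial u/\partial y_1(y^*)\ge 0$, not $\le 0$; it is the \emph{identity} \eqref{UHS-mmp-U} that gives $2\,\partial_1 u(y^*)=-2(\alpha-n)\int_{\Sigma_{\overline{\lambda},n}}|x-y^*|^{\alpha-n-2}(\overline{\lambda}-x_1)\bigl(\tfrac{v_{\overline{\lambda}}^\kappa(x)}{|x_{\overline{\lambda}}-x^o|^{s_2}}-\tfrac{v^\kappa(x)}{|x-x^o|^{s_2}}\bigr)dx<0$. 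As written your two inequalities are compatible and give no contradiction. Second, your far-field claim cites the wrong ingredient: Lemma~\ref{lm-UHS-mmp-es}(III) together with $|y|\ge|y_\lambda|$ only yields $u(y)\ge C_1^{-2}u(y_\lambda)$, not $u(y)\ge u(y_\lambda)$; what the paper actually uses in the far field (both in Lemma~\ref{lm-UHS-mmp-start} and in Step 2 here) is the asymptotic gradient expansion of Lemma~\ref{estD-UHS-uv}, which makes $|y|^{(n-\alpha)/2}u(y)$ radially increasing for $|y|\ge R_1$. The two-sided bound in Lemma~\ref{lm-UHS-mmp-es}(III) only closes the far-field gap when $|\lambda|$ is very large (the situation of Lemma~\ref{lm-UHS-mmp-start}), not when $\lambda$ stays near the fixed value $\overline{\lambda}$.
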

\begin{proof}
Assume by contradiction that $\overline{\lambda}<0$.

Since $u,\ v$ are continuous, we have
\begin{eqnarray*}
u(y)&\ge & u_{\overline{\lambda}}(y),~\forall ~y\in\Sigma_{\overline{\lambda},n-1},\\
v(x)&\ge & v_{\overline{\lambda}}(x),~\forall ~x\in\Sigma_{\overline{\lambda},n}.
\end{eqnarray*}
Then by Lemma \ref{lm-UHS-mmp-UV},  we have
\begin{eqnarray*}
u(y)&> & u_{\overline{\lambda}}(y),~\forall ~y\in\Sigma_{\overline{\lambda},n-1},\\
v(x)&> & v_{\overline{\lambda}}(x),~\forall ~x\in\Sigma_{\overline{\lambda},n}.
\end{eqnarray*}
Otherwise, if there exists $y^0\in\Sigma_{\overline{\lambda},n-1}$ such that $u(y^0)=u_{\overline{\lambda}}(y^0)$, then
$$v(x)\equiv v_{\overline{\lambda}}(x)\equiv 0,~\forall ~ x\in \R^n_+,$$
since $|x_{\overline{\lambda}}-y|^{\alpha-n}-|x-y|^{\alpha-n}>0, |x_{\overline{\lambda}}-x^o|<|x-x^o|$ (note that $\overline{\lambda}<0$). This is impossible.

Next we prove that, there exists $\epsilon>0$ such that, for any $\lambda\in(\overline{\lambda},\overline{\lambda}+\epsilon)$,
\begin{eqnarray*}
u(y)&\ge & u_\lambda(y),~\forall ~y\in\Sigma_{\lambda,n-1},\\
v(x)&\ge & v_\lambda (x),~\forall ~x\in\Sigma_{\lambda,n}.
\end{eqnarray*}

We divide the proof into two steps.

Step 1. We show that there is some $0<\epsilon_1<1 $ satisfying $\overline{\lambda}+\epsilon_1<0$, such that for any $0<\epsilon<\epsilon_1$, $\overline{\lambda}\le\lambda\le \overline{\lambda}+\epsilon$,  if $y_1\le\overline{\lambda}-1,  \ x_1\le\overline{\lambda}-1$, then
\begin{equation*}
u(y)-u_\lambda(y)\ge\frac{\epsilon_1}{2}|y|^{\alpha-n-1},~v(x)-v_\lambda (x)\ge\frac{\epsilon_1}{2}|x|^{\alpha-n-1}.
\end{equation*}

 By \eqref{UHS-mmp-U} and Fatou's Lemma, if $y_1\le\overline{\lambda}-1$, for any fixed $R>0$ large enough
\begin{eqnarray*}
&&\liminf\limits_{|y|\to\infty} |y|^{n-\alpha+1} (u(y)-u_{\overline{\lambda}}(y))\\
&\ge &\int_{\Sigma_{{\overline{\lambda}},n}}\liminf\limits_{|y|\to\infty} |y|^{n-\alpha+1}\big(|\xi_{\overline{\lambda}}-y|^{\alpha-n}-|\xi-y|^{\alpha-n}\big)
\big(\frac{v_{\overline{\lambda}}^\kappa(\xi)}{|\xi_{\overline{\lambda}}-x^o|^{s_2}}-\frac{v^\kappa(\xi)}{|\xi-x^o|^{s_2}}\big)d\xi\\
&\ge &\int_{\Sigma_{{\overline{\lambda}},n}\cap B(0,R)}\liminf\limits_{|y|\to\infty} |y|^{n-\alpha+1}\big(|\xi_{\overline{\lambda}}-y|^{\alpha-n}-|\xi-y|^{\alpha-n}\big)
\big(\frac{v_{\overline{\lambda}}^\kappa(\xi)}{|\xi_{\overline{\lambda}}-x^o|^{s_2}}-\frac{v^\kappa(\xi)}{|\xi-x^o|^{s_2}}\big)d\xi\\
&\ge & c_0>0,
\end{eqnarray*}
since $v(x)>v_{\overline{\lambda}}(x),~\forall ~x\in\Sigma_{\overline{\lambda},n}$.
 Hence there exists $0<\epsilon_2<1$ such that
$$u(y)-u_{\overline{\lambda}}(y)\ge \epsilon_2 |y|^{\alpha-n-1},$$
if $y_1\le\overline{\lambda}-1$.

On the other hand, since $|\nabla u(y)| \le C |y|^{\alpha-n-1}$ by Lemma 2.8 and Lemma 2.6 (III), we can take $0<\epsilon_3<\epsilon_2$ small enough such that if $y_1\le\overline{\lambda}-1$, $\overline{\lambda}\le \lambda\le \overline{\lambda}+\epsilon_3$, then
$$|u_\lambda(y)-u_{\overline{\lambda}}(y|\le \frac{\epsilon_2}{2}|y|^{\alpha-n-1}.$$

Thus
$$u(y)-u_\lambda(y)\ge \frac{\epsilon_3}{2} |y|^{\alpha-n-1},$$
if $y_1\le\overline{\lambda}-1, \overline{\lambda}\le \lambda\le \overline{\lambda}+\epsilon_3.$

Similar argument leads to the estimate for $v(x)$.


Step 2. We show that there exists $0<\epsilon_*<\epsilon_1$, such that, for any $0<\epsilon<\epsilon_*$, $\overline{\lambda}\le \lambda\le \overline{\lambda}+\epsilon$, if $\overline{\lambda}-1\le y_1\le \lambda, \overline{\lambda}-1\le x_1\le \lambda$, then
\begin{eqnarray*}
u(y)&\ge & u_\lambda(y),\\
v(x)&\ge & v_\lambda (x).
\end{eqnarray*}

It follows from Lemma \ref{estD-UHS-uv} that there exists $R_1\ge 4(|\overline{\lambda}|+1)$ large enough such that for any $y\in (\Sigma_{\lambda,n-1}\backslash \Sigma_{\overline{\lambda}-1,n-1})\backslash B(0, \frac{R_1}{2}),$
\begin{eqnarray*}
\nabla\big(|y|^{\frac{n-\alpha}{2}}u(y)\big)\cdot y=|y|^{\frac{n-\alpha}{2}}\big(\nabla u(y)\cdot y+\frac{n-\alpha}{2}u(y)\big)>0.
\end{eqnarray*}
Based on this,  we know that if $|y|\ge R_1$, thus $|y_\lambda|\ge\frac{R_1}{2}$, then, for $\overline{\lambda}\le\lambda\le \overline{\lambda}+\epsilon_1<0$,
$$|y|^{\frac{n-\alpha}{2}}u(y)>|y_\lambda|^{\frac{n-\alpha}{2}}u_\lambda(y).$$
Therefore,
\begin{equation}\label{equ1-UHS-mmp-infy}
u(y)>u_\lambda(y),~\mbox{if}~|y|\ge R_1.
\end{equation}

Next, we consider $y\in (\Sigma_{\lambda,n-1}\backslash \Sigma_{\overline{\lambda}-1,n-1})\cap B(0, R_1)$.  For convenience, in the following we denote $K(\lambda, \xi, y):=|\xi_\lambda-y|^{\alpha-n}-|\xi-y|^{\alpha-n}$.
First, for $0<\overline{\epsilon}<\epsilon_1$, take $R_0\ge 2R_1$ large enough such that
$$\frac{1}{{R_0}^{n+1}}\le \overline{\epsilon}.$$
Then for any $y\in (\Sigma_{\lambda,n-1}\backslash \Sigma_{\overline{\lambda}-1,n-1})\cap B(0, R_1),\xi\in (\Sigma_{\lambda,n}\backslash \Sigma_{\overline{\lambda}-1,n})\backslash B(0, R_0),$
\begin{eqnarray*}
K(\lambda, \xi, y)&=&|\xi-y_\lambda|^{\alpha-n}-|\xi-y|^{\alpha-n}\\
&\le& C\max\{|\xi_\lambda-y|^{\alpha-n-1}, |\xi-y|^{\alpha-n-1}\}||\xi-y_\lambda|-|\xi-y||\\
&\le& C|\xi|^{\alpha-n-1}|y_\lambda-y|\\
&\le& C|\xi|^{\alpha-n-1}(|y_1|-|\lambda|).
\end{eqnarray*}
Thus by using Lemma \ref{lm-UHS-mmp-es} (III),
\begin{eqnarray*}
&&|\int_{(\Sigma_{\lambda,n}\backslash \Sigma_{\overline{\lambda}-1,n})\backslash B(0, R_0)}K(\lambda, \xi, y)\big(\frac{v_\lambda^\kappa(\xi)}{|\xi_\lambda-x^o|^{s_2}}-\frac{v^\kappa(\xi)}{|\xi-x^o|^{s_2}}\big)d\xi|\\
&\le & C(|y_1|-|\lambda|)|\int_{(\Sigma_{\lambda,n}\backslash \Sigma_{\overline{\lambda}-1,n})\backslash B(0, R_0)}|\xi|^{\alpha-n-1}\cdot \frac{|\xi|^{\kappa(\alpha-n)}}{|\xi|^{s_2}}d\xi|\\
&\le &C(|y_1|-|\lambda|) |\int_{\R^n\backslash B(0, R_0)}|\xi|^{\kappa(\alpha-n)+\alpha-n-1-s_2}d\xi|\\
&=& C(|y_1|-|\lambda|)\int_{R_0}^\infty r^{\kappa(\alpha-n)+\alpha-n-1-s_2}\cdot r^{n-1}dr\\
&=& C(|y_1|-|\lambda|)\frac{1}{{R_0}^{n+1}}\le C \overline{\epsilon} (|y_1|-|\lambda|)
\end{eqnarray*}
 for some positive constant $C$.  Hence
\begin{eqnarray*}
&&u(y)-u_\lambda(y)\\
&\ge&\int_{(\Sigma_{\lambda,n}\backslash \Sigma_{\overline{\lambda}-1,n})}K(\lambda, \xi, y)\big(\frac{v_\lambda^\kappa(\xi)}{|\xi_\lambda-x^o|^{s_2}}-\frac{v^\kappa(\xi)}{|\xi-x^o|^{s_2}}\big)d\xi\\
&&+\int_{(\Sigma_{\overline{\lambda}-2,n}\backslash \Sigma_{\overline{\lambda}-3,n})}K(\lambda, \xi, y)\big(\frac{v_\lambda^\kappa(\xi)}{|\xi_\lambda-x^o|^{s_2}}-\frac{v^\kappa(\xi)}{|\xi-x^o|^{s_2}}\big)d\xi\\
&\ge&-C\overline{\epsilon}(|y_1|-|\lambda|)+\int_{(\Sigma_{\lambda,n}\backslash \Sigma_{\overline{\lambda}-1,n})\cap B(0, R_0)}K(\lambda, \xi, y)\big(\frac{v_\lambda^\kappa(\xi)}{|\xi_\lambda-x^o|^{s_2}}-\frac{v^\kappa(\xi)}{|\xi-x^o|^{s_2}}\big)d\xi\\
&&+\int_{(\Sigma_{\overline{\lambda}-2,n}\backslash \Sigma_{\overline{\lambda}-3,n})\cap B(0, R_0)}K(\lambda, \xi, y)\big(\frac{v_\lambda^\kappa(\xi)}{|\xi_\lambda-x^o|^{s_2}}-\frac{v^\kappa(\xi)}{|\xi-x^o|^{s_2}}\big)d\xi\\
&:=&-C\overline{\epsilon} (|y_1|-|\lambda|)+A+B.
\end{eqnarray*}
We rewrite $A$ as
\begin{eqnarray*}
A&=&\int_{(\Sigma_{\lambda,n}\backslash \Sigma_{\overline{\lambda},n})\cap B(0, R_0)}K(\lambda, \xi, y)\big(\frac{v_\lambda^\kappa(\xi)}{|\xi_\lambda-x^o|^{s_2}}-\frac{v^\kappa(\xi)}{|\xi-x^o|^{s_2}}\big)d\xi\\
&&+\int_{(\Sigma_{\overline{\lambda},n}\backslash \Sigma_{\overline{\lambda}-1,n})\cap B(0, R_0)}K(\lambda, \xi, y)\big(\frac{v_\lambda^\kappa(\xi)}{|\xi_\lambda-x^o|^{s_2}}-\frac{v^\kappa(\xi)}{|\xi-x^o|^{s_2}}\big)d\xi\\
&:=&A_1+A_2.
\end{eqnarray*}
Since $y\in (\Sigma_{\lambda,n-1}\backslash \Sigma_{\overline{\lambda}-1,n-1})\cap B(0, R_1)$, it is easy to see that
\begin{equation*}
|A_1|\le C(R^o) \int_{(\Sigma_{\lambda,n}\backslash \Sigma_{\overline{\lambda},n})\cap B(0, R_0)}K(\lambda, \xi, y)d\xi\le C(R^o,R_1)|y_\lambda-y| \int_{(\Sigma_{\lambda,n}\backslash \Sigma_{\overline{\lambda},n})\cap B(0, R_0)}d\xi.
\end{equation*}
So we can take $0<\epsilon_*<\overline{\epsilon}$ small enough such that for any $\overline{\lambda}\le \lambda\le \overline{\lambda}+\epsilon_*$ ( note the volume of $(\Sigma_{\lambda,n}\backslash \Sigma_{\overline{\lambda},n})\cap B(0, R_0)$ is small ),
\begin{equation*}
|A_1|\le \frac {C}4\overline{\epsilon}(|y_1|-|\lambda|),
\end{equation*}
and
\begin{eqnarray*}
|A_2|&\le & \int_{(\Sigma_{\overline{\lambda},n}\backslash \Sigma_{\overline{\lambda}-1,n})\cap B(0, R_0)}K(\lambda, \xi, y)\bigg|\frac{v_\lambda^\kappa(\xi)}{|\xi_\lambda-x^o|^{s_2}}-\frac{v_{\overline{\lambda}}^\kappa(\xi)}{|\xi_{\overline{\lambda}}-x^o|^{s_2}}\bigg|d\xi\\
&\le& C(R^o,R_1)|y_\lambda-y|\int_{(\Sigma_{\overline{\lambda},n}\backslash \Sigma_{\overline{\lambda}-1,n})\cap B(0, R_0)}\bigg|\frac{v_\lambda^\kappa(\xi)}{|\xi_\lambda-x^o|^{s_2}}-\frac{v_{\overline{\lambda}}^\kappa(\xi)}{|\xi_{\overline{\lambda}}-x^o|^{s_2}}\bigg|d\xi\\
&\le& \frac {C}4\overline{\epsilon}(|y_1|-|\lambda|),
\end{eqnarray*}
since $\bigg|\frac{v_\lambda^\kappa(\xi)}{|\xi_\lambda-x^o|^{s_2}}-\frac{v_{\overline{\lambda}}^\kappa(\xi)}{|\xi_{\overline{\lambda}}-x^o|^{s_2}}\bigg|$ is small uniformly for $\xi\in(\Sigma_{\overline{\lambda},n}\backslash \Sigma_{\overline{\lambda}-1,n})\cap B(0, R_0)$  and   $ \lambda \in [\overline{\lambda},  \overline{\lambda}+\epsilon_*]$.
So we have
$$A\ge -\frac C 2\overline{\epsilon}(|y_1|-|\lambda|).$$
Now we always assume  $\overline{\lambda}\le \lambda\le \overline{\lambda}+\epsilon_*$. Notice that
$$\frac{\partial K(\lambda, \xi, y)}{\partial y_1}\cdot y_1\big\vert_{y_1=\lambda}=(\alpha-n)|\xi-y|^{\alpha-n-2}(2\xi_1-2\lambda)y_1>0$$ if $\xi_1<\lambda$. Since $K(\lambda, \xi, y)\big\vert_{y_1=\lambda}=0$, there exists $\delta>0$ independent of $\overline{\epsilon}, \epsilon_*$, such that
$$K(\lambda, \xi, y)\ge \delta (|y_1|-|\lambda|),
$$ for any $y\in (\Sigma_{\lambda,n-1}\backslash \Sigma_{\overline{\lambda}-1,n-1})\cap B(0, R_1)$ and $\xi$ satisfying $\overline{\lambda}-3\le\xi_1\le\overline{\lambda}-2<\lambda.$   By Step 1, there exists $\delta_1$ independent of $\overline{\epsilon}, \epsilon_*$, such that
$$v_\lambda^\kappa(\xi)-v^\kappa(\xi)\ge\delta_1, ~\forall~\xi\in \Sigma_{\overline{\lambda}-2,n}\backslash \Sigma_{\overline{\lambda}-3,n}.$$ Hence
\begin{equation*}
B\ge\int_{(\Sigma_{\overline{\lambda}-2,n}\backslash \Sigma_{\overline{\lambda}-3,n})\cap B(0, R_0)}K(\lambda, \xi, y)\frac{(v_\lambda^\kappa(\xi)-v^\kappa(\xi))}{|\xi-x^o|^{s_2}}d\xi\ge C\delta_1\delta (|y_1|-|\lambda|)
\end{equation*}
 for some positive constant $C$.

Based on the above, we have for $y\in (\Sigma_{\lambda,n-1}\backslash \Sigma_{\overline{\lambda}-1,n-1})\cap B(0, R_1)$ that
$$u(y)-u_\lambda(y)\ge -C\overline{\epsilon} (|y_1|-|\lambda|)+C\delta_1\delta (|y_1|-|\lambda|)\ge 0,$$which combining with \eqref{equ1-UHS-mmp-infy} ends the proof for $u(y)$ of Step 2, where  $\overline{\lambda}\le \lambda\le \overline{\lambda}+\epsilon_*$.

The proof for $v(x)$ is similar. These contradict the definition of $\overline{\lambda}$, thus implies $\overline{\lambda}=0.$
\end{proof}

{\bf Proof of Proposition \ref{thm-sym-UHS-sub}.}
From Lemma \ref{lm-UHS-mmp-infy}, we know that  both $u(y)$ and $v(x)$ are symmetric with respect to $x_1=0$ since one also can move the plane from positive side of $x_1$  to zero.
Similar argument shows that  $u(y)$ and $v(x)$ are symmetric with respect to $x_2=0, x_3=0, \cdots,x_{n-1}=0$. \hfill$\Box$


\subsection{Proof of Theorem \ref{HLS-ball}}
For any $a>0$ and $b \ge \omega_{n-1}^{1/p-1/2} a$, consider
$$
\inf\{\|E_\alpha f\|_{L^{t^\prime}(B^n)}\ \ : \ \ f \ge 0, \ \ \ \|f\|_{L^p(\partial B^n)}=a, \  \|f\|_{L^2(\partial B^n)}\le b\}.
$$
Letting $v=a^{-1}f$, we know the above infimum is attained by a constant function.
 This indicates that: if $0<p<\frac{2(n-1)}{n+\alpha-2}, 0<t<\frac{2n}{n+\alpha}$, then for any $f\in L^2(\partial B^n)$,
$$
\|E_\alpha f\|_{L^{t^\prime}(B^n)} \ge \xi_\alpha \|f\|_{L^p(\partial B^n)}.$$

For any general nonnegative $f\in L^p(\partial B^n)$, we consider $f_A:=\min(f(x), A) \in L^2(\partial B^n)$ in the above inequality. Sending $A \to \infty$ , we obtain (via the monotone convergence theorem) the best constant for inequality \eqref{IYW-ball-equv} is given by
$$C(n,\alpha,p,t)=\xi_\alpha=(n\omega_n)^{-\frac{1}{p}}\big(\int_{B^n}\big(\int_{\partial B^n}\frac{1}{|\xi-\zeta|^{n-\alpha}}d\zeta\big)^{t^\prime}d\xi\big)^{\frac{1}{t^\prime}},$$
and the extremal function must be constant function. Sending $p\to (\frac{2(n-1)}{n+\alpha-2})^-, t\to (\frac{2n}{n+\alpha})^-$, we have
$C(n,\alpha,p,t)\to C_{e_1}(n,\alpha)$ for critical powers and constant function is an extremal function for inequality \eqref{IYW-ball-equv}.





To classify all extremal function for inequality \eqref{ineq-HLS-UHS} for $p=\frac{2(n-1)}{n+\alpha-2},t=\frac{2n}{n+\alpha}$,  we define
$$\tilde{E}_\alpha f(x)=\int_{\partial \R^n_+}\frac{f(y)}{|x-y|^{n-\alpha}}dy,~\text{for all}~x\in \R^n_+,$$
where $f\in L^p(\partial \R^n_+)$. Then  sharp inequality \eqref{ineq-HLS-UHS}  is equivalent to
\begin{equation}\label{ineq-HLS-UHS-equv-C}
\|\tilde{E}_\alpha f\|_{L^{t^\prime}(\R^n_+)}\ge C_{e_1}(n,\alpha)\|f\|_{L^p(\partial \R^n_+)}.
\end{equation}
The Euler-Lagrange equation for extremal functions, up to a constant multiplier, is given by
\begin{equation}\label{equ-EL-critical}
f^{p-1}(y)=\int_{\R^n_+}\frac{(\tilde{E}_\alpha f(x))^{t^\prime-1}}{|x-y|^{n-\alpha}}dx, \ \ ~\forall~y\in\partial \R^n_+.
\end{equation}

Let
$u(y)=f^{p-1}(y), v(x)=(\tilde{E}_\alpha f(x))^{t^\prime-1}$. Then
 up to a constant, we have
\begin{equation}\label{equ-UHS-cri}
\begin{cases}
u(y)=\int_{\R^n_+}\frac{v^\kappa(x)}{|x-y|^{n-\alpha}}
dx, ~y\in\partial \R^n_+,\\
v(x)=\int_{\partial \R^n_+}\frac{u^\theta(y)}{|x-y|^{n-\alpha}}dy,~x\in \R^n_+,
\end{cases}
\end{equation}
where $\kappa:=t^\prime-1<0,~ \theta:=\frac{1}{p-1}<0, u(y)\in L^{\theta+1}(\partial \R^n_+),v(x)\in L^{t^\prime}(\R^n_+)$. As in the subcritical case (see Lemma \ref{lm-UHS-regu}),  $u(y), y\in \partial \R^n_+$ is differentiable in $y_i, i=1,2,\cdots,n-1$, and $v(x), x\in \overline{\R^n_+}$ is differentiable in $x_i, i=1,2,\cdots,n$.
Using  the method of moving spheres, (see similar argument given in \cite{DZ-1}-\cite{DZ-2}), we know
$$
u(y)=c_1(\frac{1}{|y-\overline{y}^0|^2+d^2})^{\frac{n-\alpha}{2}},\ \ ~
v(y,0)=c_2(\frac{1}{|y-\overline{y}^0|^2+d^2})^{\frac{n-\alpha}{2}},
$$
where $y,\ \overline{y}^0\in\partial \R^n_+,\ c_1,\ c_2, \ d>0$. Thus
$$
f(y)=c(n,\alpha)(\frac{1}{|y-\overline{y}^0|^2+d^2})^{\frac{n+\alpha-2}{2}},
$$
where $y,\overline{y}^0\in\partial \R^n_+,c(n,\alpha)>0$.

\section{A new  sharp integral inequality on the upper half space} \label{Section 3}
In this section we shall use the similar approach in previous section  to establish the sharp integral inequality \eqref{ineq-HLST-UHS} on the upper half space  \eqref{ineq-HLST-UHS}.  Note that the new kernel can be viewed as the partial derivative of the extension kernel used in Dou and Zhu \cite{DZ-1}. We always assume  $\alpha\ge2$ in this section.


By conformal transformation \eqref{confor-map}, it is easy to see that the inequality \eqref{ineq-HLST-UHS} is equivalent to the following integral inequality on the ball $B^n=B(z^o,1)$ with $z^o=(0,-1)\in \R^{n-1}\times\R$:
\begin{thm}\label{HLST-ball}
For $n\ge3,  2\le\alpha<n$, $p=\frac{2(n-1)}{n+\alpha-4}, t=\frac{2n}{n+\alpha}$, the following sharp inequality holds for all $f\in L^p(\partial B^n), g\in L^t(B^n):$
\begin{equation}\label{ineq-HLST-ball}
\int_{B^n}\int_{\partial B^n}\frac{(1-|x-z^o|^2)g(x)f(y)}{|x-y|^{n-\alpha+2}} dy dx\le C_{e_2}(n,\alpha)\|f\|_{L^p(\partial B^n)}\|g\|_{L^t(B^n)},
\end{equation}
where $C_{e_2}(n,\alpha)$ is given by \eqref{9-19-2}.
\end{thm}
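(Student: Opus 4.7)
The plan is to reduce \eqref{ineq-HLST-UHS} to the ball inequality \eqref{ineq-HLST-ball} via the conformal map $T_{x^o}$, and then to prove \eqref{ineq-HLST-ball} by adapting the subcritical scheme of Section~\ref{Section 2}. For the reduction, write $\phi(x)=2/|x-x^o|$; the standard identities $|T_{x^o}x-T_{x^o}y|=\phi(x)\phi(y)|x-y|$, $dT_{x^o}(x)=\phi(x)^{2n}dx$, $dT_{x^o}(y)|_{\partial B^n}=\phi(y)^{2(n-1)}dy$, together with the direct calculation $(T_{x^o}x)_n=\frac12(1-|x-z^o|^2)\phi(x)^2$, turn the integrand of \eqref{ineq-HLST-UHS} into $\frac12(1-|x-z^o|^2)|x-y|^{-(n-\alpha+2)}\phi(x)^{n+\alpha}\phi(y)^{n+\alpha-4}$. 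The rescalings $\tilde g(Tx)=g(x)\phi(x)^{-(n+\alpha)}$ and $\tilde f(Ty)=f(y)\phi(y)^{-(n+\alpha-4)}$ are isometries of $L^t(\R^n_+)\leftrightarrow L^t(B^n)$ and $L^p(\partial\R^n_+)\leftrightarrow L^p(\partial B^n)$ precisely when $t=2n/(n+\alpha)$ and $p=2(n-1)/(n+\alpha-4)$, and the residual $1/2$ matches the $C_{e_2}/2$ on the right-hand side of \eqref{ineq-HLST-UHS}, so it suffices to prove \eqref{ineq-HLST-ball}.

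\textbf{Subcritical scheme on the ball.} For subcritical exponents $p<2(n-1)/(n+\alpha-4)$, $t<2n/(n+\alpha)$ (both above $1$ in this forward regime), I would adapt Lemma~\ref{RY-ball} with ordinary rather than reversed H\"older, majorizing $(1-|x-z^o|^2)|x-y|^{-(n-\alpha+2)}$ by a symmetric product kernel whose $L^{at'}(B^n)$ and $L^{(1-a)p'}(\partial B^n)$ tails are finite exactly because of the strict inequalities. The resulting subcritical inequality, combined with the weak-compactness/Arzel\`a--Ascoli argument of Proposition~\ref{thm-exist-UHS-sub}, yields a positive maximizer $f^*$ for
$$\eta_\alpha:=\sup\bigl\{\|\widetilde E^{(2)}_\alpha f\|_{L^{t'}(B^n)}:f\ge 0,\ \|f\|_{L^p(\partial B^n)}=1,\ \|f\|_{L^2(\partial B^n)}\le b_2\bigr\},$$
where $\widetilde E^{(2)}_\alpha f(x)=\int_{\partial B^n}(1-|x-z^o|^2)f(y)|x-y|^{-(n-\alpha+2)}dy$.

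\textbf{Symmetry, limit and classification.} Pulling the Euler--Lagrange system for $f^*$ back by $T_{x^o}$ gives a positive-exponent coupled system on $\R^n_+$ of the same shape as \eqref{equ2-UHS-sub} but with an extra factor $x_n$ in the kernel. The method of moving planes in the directions $x_1,\dots,x_{n-1}$, modeled on Lemmas~\ref{lm-UHS-mmp-es}--\ref{lm-UHS-mmp-infy}, forces symmetry of $f^*$ about every axis through $x^o$, and the geometric argument of Proposition~\ref{thm-sol-UHS-sub} then pins $f^*$ to a constant on $\partial B^n$, so the subcritical sharp constant is the explicit integral in \eqref{9-19-2} with subcritical $p,t$. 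A monotone truncation $f_A=\min(f,A)$ as in the end of Section~\ref{Section 2} then lets $p\uparrow 2(n-1)/(n+\alpha-4)$ and $t\uparrow 2n/(n+\alpha)$ with $\eta_\alpha\to C_{e_2}(n,\alpha)$, giving \eqref{ineq-HLST-ball} with constants as critical-case extremizers; the full list $f(y)=c(|y-\overline y^0|^2+d^2)^{-(n+\alpha-4)/2}$ then follows from the critical Euler--Lagrange equation by the method of moving spheres, as in Li~\cite{Li} and Dou--Zhu~\cite{DZ-1,DZ-2}.

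\textbf{Main obstacle.} The hard part will be the moving-plane step: because the new kernel carries an extra factor $x_n$ and the exponents $\kappa=t'-1$, $\theta=1/(p-1)$ are now \emph{positive} (opposite sign from Section~\ref{Section 2}), the integrability/regularity/asymptotic lemmas~\ref{lm-UHS-mmp-es}--\ref{estD-UHS-uv} must be rederived with the correct decay $u(y)\sim|y|^{-(n-\alpha+2)}$ and the correspondingly rescaled test quantity $\nabla u\cdot y+\frac{n-\alpha+2}{2}u$. Once these new decay estimates are in place, the three-step start/propagation/uniqueness scheme of Section~\ref{Section 2} should go through essentially verbatim.
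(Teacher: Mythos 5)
Your overall architecture — conformal reduction from \eqref{ineq-HLST-UHS} to \eqref{ineq-HLST-ball}, Young-type inequality with ordinary H\"older for subcritical powers, existence of a maximizer, moving planes for symmetry, passage to the critical limit, and classification by moving spheres — coincides with the paper's, and your conformal bookkeeping (in particular $(T_{x^o}x)_n=\frac12(1-|x-z^o|^2)\phi(x)^2$) is correct. But the key moving-planes step is handled very differently. You propose to rederive the Section~\ref{Section 2} machinery (Lemmas~\ref{lm-UHS-mmp-es}--\ref{estD-UHS-uv}: pointwise asymptotics $u(y)\sim|y|^{-(n-\alpha+2)}$, differentiability, and the Li-style test quantity $\nabla u\cdot y+\tfrac{n-\alpha+2}{2}u$), and you rightly flag this as the main difficulty. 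The paper instead sidesteps all of this: in Section~\ref{Section 3} it only proves continuity of $u,v$ (Lemma~\ref{lm-UHST-regu}), and the symmetry argument (Lemmas~\ref{lm-UHST-mmp-start} and~\ref{lm-UHST-mmp-infy}) is a Chen--Li--Ou-type $L^p$-norm moving-planes scheme that feeds the difference inequalities back through the just-established HLS bounds \eqref{IYWT-UHST-1}--\eqref{IYWT-UHST-2} and H\"older, so that the bad sets $\Sigma^u_\lambda,\Sigma^v_\lambda$ must be empty once $\|u\|_{L^{\theta+1}}$, $\|v\|_{L^{t'}}$ restricted to them are small. This is decisively simpler here because, with $\kappa,\theta>0$, one does not a priori control $\int v^\kappa$ pointwise; the $L^p$ route uses exactly the integrability $u\in L^{\theta+1}$, $v\in L^{t'}$ that is available. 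Your proposed route is not obviously wrong, but carrying out the pointwise decay and derivative lemmas for the $x_n$-weighted kernel with positive $\kappa,\theta$ (and the two subcases $\theta\le1$, $\theta>1$) would be substantially more work than the paper's argument. A smaller point: you carry over the $L^2$-constrained minimization $\|f\|_{L^2}\le b_2$ from the reversed case; this is unnecessary in the forward case, where the paper's Theorem~\ref{thm-exist-UHST-sub} proves directly that $\{P_\alpha f_j\}$ is precompact in $L^{t'}(B^n)$ (the kernel is uniformly bounded away from $\partial B^n$), so the constrained problem and the later relaxation are not needed.
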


\subsection{A Young Inequality}

We first establish a Young inequality on a ball.

As in previous section we denote  $x^*=z^o+\frac{x-z^o}{|x-z^o|}$ for $x\in B^n\setminus \{z^o\}$ and $(z^o)^*=0$.

\begin{lm}\label{RYT-ball}
Let $p,t>1$ with $1/p+1/t\ge 1$  and $\frac{1}{p}+\frac{1}{p^\prime}=1, \frac{1}{t}+\frac{1}{t^\prime}=1$. Let $h(x, y)$ and $h_1(x,y)$ be two functions defined on $ B^n \times \partial B^n$ and $\partial B^n \times \partial B^n$ respectively. Assume $h(\cdot, x^o)\in L^{at^\prime}(B^n), h_1(0, \cdot)\in L^{(1-a)p^\prime}(\partial B^n)$ for some $0<a<1$, satisfying $|h(x,y)|\le |h_1(x^*,y)|, ~\forall~ x\in B^n, y\in \partial B^n$ and $\|h(\cdot,y^1)\|_{L^{at^\prime}( B^n)}=\|h(\cdot,y^2)\|_{L^{at^\prime}( B^n)}, $ $ ~\forall~ y^1,y^2\in \partial B^n$, $\|h_1(z^1,\cdot)\|_{L^{(1-a)p^\prime}(\partial B^n)}=\|h_1(z^2,\cdot)\|_{L^{(1-a)p^\prime}(\partial B^n)}$, $~\forall~ z^1,z^2\in \partial B^n$. Then for all $f\in L^p(\partial B^n), g\in L^t(B^n)$, there holds
\begin{equation}\label{equ-RYT-ball}
\int_{B^n}\int_{\partial B^n}g(x)f(y)h(x,y)dy dx\le\|f\|_{L^p(\partial B^n)}\|g\|_{L^t(B^n)}\|h(\cdot, x^o)\|_{L^{at^\prime}(B^n)}\|h_1(0,\cdot)\|_{L^{(1-a)p^\prime}(\partial B^n)}.
\end{equation}
\end{lm}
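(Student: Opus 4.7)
The plan is to mirror the three-factor decomposition from the proof of Lemma \ref{RY-ball}, replacing the reversed H\"older inequality with the classical H\"older inequality. The hypotheses $p,t>1$ and $\frac{1}{p}+\frac{1}{t}\ge 1$ are exactly what is needed to make the auxiliary exponent well-defined and to apply H\"older in the standard direction.

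First I would choose $q\ge 1$ with $\frac{1}{p}+\frac{1}{t}+\frac{1}{q}=2$, which under the hypothesis yields $\frac{1}{q'}=\frac{1}{p}+\frac{1}{t}-1\ge 0$ together with the relations
\begin{equation*}
\frac{1}{p'}+\frac{1}{t'}+\frac{1}{q'}=1,\qquad \frac{1}{p'}+\frac{1}{q'}=\frac{1}{t},\qquad \frac{1}{t'}+\frac{1}{q'}=\frac{1}{p}.
\end{equation*}
Setting
\begin{equation*}
\gamma_1(x,y)=|g(x)|^{t/p'}|h(x,y)|^{1-a},\quad \gamma_2(x,y)=|f(y)|^{p/t'}|h(x,y)|^{a},\quad \gamma_3(x,y)=|g(x)|^{t/q'}|f(y)|^{p/q'},
\end{equation*}
the identities above give $\gamma_1\gamma_2\gamma_3 = |g(x)f(y)h(x,y)|$, so the three-factor H\"older inequality on $B^n\times\partial B^n$ with exponents $p',t',q'$ bounds the left-hand side in absolute value by $\|\gamma_1\|_{p'}\|\gamma_2\|_{t'}\|\gamma_3\|_{q'}$.

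Each factor is then estimated as in Lemma \ref{RY-ball}. The $\gamma_3$ term separates immediately into $\|g\|_{L^t(B^n)}^{t/q'}\|f\|_{L^p(\partial B^n)}^{p/q'}$. For $\gamma_2$, the assumed $y$-independence of $\|h(\cdot,y)\|_{L^{at'}(B^n)}$ allows Fubini to factor the double integral, giving $\|f\|_{L^p(\partial B^n)}^{p/t'}\|h(\cdot,x^o)\|_{L^{at'}(B^n)}^{a}$. For $\gamma_1$, the dominance $|h(x,y)|\le |h_1(x^*,y)|$ is used to upgrade the integrand of $\|\gamma_1\|_{p'}^{p'}$ from $|g(x)|^t|h(x,y)|^{(1-a)p'}$ to $|g(x)|^t|h_1(x^*,y)|^{(1-a)p'}$, and then the assumed $z$-independence of $\|h_1(z,\cdot)\|_{L^{(1-a)p'}(\partial B^n)}$ separates the variables and produces $\|g\|_{L^t(B^n)}^{t/p'}\|h_1(0,\cdot)\|_{L^{(1-a)p'}(\partial B^n)}^{1-a}$. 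Combining the three bounds and using $t/p'+t/q'=1$, $p/t'+p/q'=1$ yields the desired inequality.

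I do not anticipate a serious obstacle: the decomposition, the Fubini/invariance argument, and the dominance step are all essentially identical to the reversed version. The one point that must be handled with care is the direction of H\"older and the admissibility of $q$, which is exactly where the hypothesis $\frac{1}{p}+\frac{1}{t}\ge 1$ enters (guaranteeing $q\ge 1$ and $p',t',q'\in[1,\infty]$), in contrast to the negative exponents encountered in the proof of Lemma \ref{RY-ball}.
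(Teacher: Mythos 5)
Your proposal is correct and follows the paper's proof essentially verbatim: the same $\gamma_1,\gamma_2,\gamma_3$ decomposition, the same use of the $y$- and $z$-invariance hypotheses to separate the double integrals, and the same use of the domination $|h(x,y)|\le|h_1(x^*,y)|$ on the $\gamma_1$ factor, with the only cosmetic difference being that you apply a single three-factor H\"older with exponents $p',t',q'$ where the paper applies two-factor H\"older twice. (Both your estimates and the paper's actually produce the exponents $\|h(\cdot,x^o)\|_{L^{at'}}^{a}\|h_1(0,\cdot)\|_{L^{(1-a)p'}}^{1-a}$ rather than the unexponentiated product written in the lemma's conclusion; that appears to be a typo in the paper's statement, present also in Lemma \ref{RY-ball}, and is inconsequential for the applications since only finiteness of the constant is used.)
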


\begin{proof}
Without loss of generality, we can assume that $f,g,h,h_1$ are nonnegative.

Choose $q \ge 1$ such that $\frac{1}{p}+\frac{1}{t}+\frac{1}{q}=2, $ and  $q'=q/(q-1)$ be its conjugate.
Denote
$$I=\int_{B^n}\int_{\partial B^n}g(x)f(y)h(x,y)dy dx$$ and
\begin{eqnarray*}
\gamma_1(x,y)&=&g^{\frac{t}{p^\prime}}(x)h^{1-a}(x,y),\\
\gamma_2(x,y)&=&f^{\frac{p}{t^\prime}}(y)h^a(x,y),\\
\gamma_3(x,y)&=&g^{\frac{t}{q^\prime}}(x)f^{\frac{p}{q^\prime}}(y).
\end{eqnarray*}
(Choose $\gamma_3=1$ if $q'=\infty$).
By H\"{o}lder's inequality, we have
\begin{eqnarray*}
I&\le&\big(\int_{B^n}\int_{\partial B^n}\gamma_3^{q^\prime}(x,y)dy dx\big)^{\frac{1}{q^\prime}}\big(\int_{B^n}\int_{\partial B^n}(\gamma_1(x,y)\gamma_2(x,y))^q dy dx\big)^{\frac{1}{q}}\\
&\le&\big(\int_{B^n}\int_{\partial B^n}\gamma_3^{q^\prime}(x,y)dy dx\big)^{\frac{1}{q^\prime}}\big(\int_{B^n}\int_{\partial B^n}\gamma_1^{p^\prime}(x,y) dy dx\big)^{\frac{1}{p^\prime}}\big(\int_{B^n}\int_{\partial B^n}\gamma_2^{t^\prime}(x,y) dy dx\big)^{\frac{1}{t^\prime}}\\
&=&\|\gamma_3\|_{L^{q^\prime}(B^n\times\partial B^n)}\|\gamma_1\|_{L^{p^\prime}(B^n\times\partial B^n)}\|\gamma_2\|_{L^{t^\prime}(B^n\times\partial B^n)}.
\end{eqnarray*}
On the other hand, it is easy to see
\begin{equation*}
\|\gamma_3\|_{L^{q^\prime}(B^n\times\partial B^n)}=\big(\int_{B^n}\int_{\partial B^n}g^t(x)f^p(y)dy dx\big)^{\frac{1}{q^\prime}}=\|g\|^{\frac{t}{q^\prime}}_{L^t(B^n)}\|f\|^{\frac{p}{q^\prime}}_{L^p(\partial B^n)},
\end{equation*}
and
\begin{eqnarray*}
\|\gamma_2\|_{L^{t^\prime}(B^n\times\partial B^n)}&=&\big(\int_{B^n}\int_{\partial B^n}f^p(y)h^{at^\prime}(x,y)dy dx\big)^{\frac{1}{t^\prime}}\\
&=&\big(\int_{\partial B^n} f^p(y)dy\big)^{\frac{1}{t^\prime}}\big(\int_{B^n} h^{at^\prime}(x,x^o)dx\big)^{\frac{1}{t^\prime}}\\
&=&\|f\|^{\frac{p}{t^\prime}}_{L^p(\partial B^n)}\|h(\cdot,x^o)\|^a_{L^{at^\prime}(B^n)}.
\end{eqnarray*}
Since $h(x,y)\le h_1(x^*,y), ~\forall~ x\in B^n, y\in \partial B^n$ and $\|h_1(z^1,\cdot)\|_{L^{(1-a)p^\prime}(\partial B^n)}=\|h_1(z^2,\cdot)\|_{L^{(1-a)p^\prime}(\partial B^n)}, ~\forall~ z^1,z^2\in \partial B^n$, we have
\begin{eqnarray*}
\|\gamma_1\|_{L^{p^\prime}(B^n\times\partial B^n)}&=&\big(\int_{B^n}\int_{\partial B^n}g^t(x)h^{(1-a)p^\prime}(x,y)dy dx\big)^{\frac{1}{p^\prime}}\\
&\le&\big(\int_{B^n}\int_{\partial B^n}g^t(x)h_1^{(1-a)p^\prime}(x^*,y)dy dx\big)^{\frac{1}{p^\prime}}\\
&=&\big(\int_{B^n} g^t(x)dx\big)^{\frac{1}{p^\prime}}\big(\int_{\partial B^n} h_1^{(1-a)p^\prime}(0,y)dy\big)^{\frac{1}{p^\prime}}\\
&=&\|g\|^{\frac{t}{p^\prime}}_{L^t(B^n)}\|h_1(0, \cdot)\|^{1-a}_{L^{(1-a)p^\prime}(\partial B^n)}.
\end{eqnarray*}
Thus inequality \eqref{equ-RYT-ball} holds.
\end{proof}

\begin{crl}\label{RYWT-ball}
Assume $n\ge3, \ 2\le\alpha<n$, $\frac{1}{p}+\frac{1}{t}\ge1$ and
$$p>\frac{2(n-1)}{n+\alpha-4}, t>\frac{2n}{n+\alpha}.$$
Then there is a constant $C(n,\alpha,p,t)>0$, such that, for all $f\in L^p(\partial B^n), g\in L^t(B^n)$, there holds
\begin{equation}\label{IYWT-ball}
\int_{B^n}\int_{\partial B^n}\frac{(1-|x-z^o|^2)g(x)f(y)}{|x-y|^{n-\alpha+2}} dy dx\le C(n,\alpha,p,t)\|f\|_{L^p(\partial B^n)}\|g\|_{L^t(B^n)}.
\end{equation}
\end{crl}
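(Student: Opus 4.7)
The plan is to apply Lemma \ref{RYT-ball} with choices parallel to those made in Corollary \ref{RYW-ball}. Specifically, I would take
\begin{equation*}
h(x,y) = \frac{1-|x-z^o|^2}{|x-y|^{n-\alpha+2}}, \qquad h_1(z,y) = \begin{cases} C_1, & |z-y| > \sqrt{2}, \\ C_2 |z-y|^{-(n-\alpha+1)}, & |z-y| \le \sqrt{2}, \end{cases}
\end{equation*}
for constants $C_1,C_2>0$ depending only on $n$ and $\alpha$. The rotational invariance of the relevant $L^p$ norms required by Lemma \ref{RYT-ball} is automatic, since both $h$ and $h_1$ depend only on the distances involved and the orthogonal group fixing $z^o$ acts transitively on $\partial B^n$ while preserving $B^n$.

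The heart of the argument will be the pointwise domination $h(x,y) \le h_1(x^*,y)$. For this I would use the geometric identity
\begin{equation*}
|x-y|^2 = (1-|x-z^o|)^2 + |x-z^o|\,|x^*-y|^2, \qquad x \in B^n \setminus \{z^o\}, \; y \in \partial B^n,
\end{equation*}
which is obtained by expanding both sides using $|y-z^o| = |x^*-z^o| = 1$. This identity yields both $|x-y| \ge 1-|x-z^o|$ (hence $1-|x-z^o|^2 \le 2|x-y|$ and so $h(x,y) \le 2|x-y|^{-(n-\alpha+1)}$) and $|x-y| \ge \sqrt{|x-z^o|}\,|x^*-y|$. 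Splitting into the cases $|x-z^o|\ge 1/2$ (using the latter bound) and $|x-z^o|<1/2$ (where $|x-y|\ge 1/2$ holds directly) one then obtains the desired domination after choosing $C_1,C_2$ large enough.

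Once the domination is established, the remaining task is to pick $a\in(0,1)$ so that $h(\cdot,x^o)\in L^{at'}(B^n)$ and $h_1(0,\cdot)\in L^{(1-a)p'}(\partial B^n)$. Using the bound $h(x,x^o)\le 2|x-x^o|^{-(n-\alpha+1)}$ together with the fact that $x^o\in\partial B^n$, the first condition requires $(n-\alpha+1)at'<n$; the singularity of $h_1(0,\cdot)$ at $y=0\in\partial B^n$ requires $(n-\alpha+1)(1-a)p'<n-1$. A valid $a$ exists precisely when
\begin{equation*}
\frac{n}{t'(n-\alpha+1)}+\frac{n-1}{p'(n-\alpha+1)}>1,
\end{equation*}
which, using $1/p+1/p'=1$ and $1/t+1/t'=1$, reduces to $n/t+(n-1)/p<n+\alpha-2$. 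Plugging in the critical pair $p_c=2(n-1)/(n+\alpha-4)$, $t_c=2n/(n+\alpha)$ gives equality, so the strict subcritical hypotheses $p>p_c$, $t>t_c$ in the corollary yield the required strict inequality, and the estimate follows from Lemma \ref{RYT-ball}.

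The step I expect to be the main obstacle is the pointwise domination $h(x,y)\le h_1(x^*,y)$, since it depends on carefully exploiting the geometric identity and on an appropriate case split on $|x-z^o|$. The integrability reduction is arithmetic and closely mirrors the calculation already carried out for Corollary \ref{RYW-ball}, while the invariance hypothesis is built into the symmetric choice of $h$ and $h_1$.
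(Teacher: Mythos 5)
Your proposal takes essentially the same route as the paper: both apply Lemma \ref{RYT-ball} with the same $h_1$ (up to constant multiples) and read off admissibility of some $a\in(0,1)$ from the strict subcriticality $n/t+(n-1)/p<n+\alpha-2$, which is exactly your reduced inequality. The only real difference is your choice of $h$: the paper first replaces the kernel by the crude bound $K(x,y)\le 4|x-y|^{-(n-\alpha+1)}$ and takes that as the lemma's $h$, while you keep $h=K$ and deduce $K(x,y)\le h_1(x^*,y)$ directly from the (correct) identity $|x-y|^2=(1-|x-z^o|)^2+|x-z^o|\,|x^*-y|^2$, which cleanly packages both $1-|x-z^o|^2\le 2|x-y|$ and $|x-y|\ge\sqrt{|x-z^o|}\,|x^*-y|$. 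Your variant is in fact safer: with the paper's $h=4|x-y|^{-(n-\alpha+1)}$ and $h_1\equiv 1$ on $\{|z-y|>\sqrt2\}$, the stated domination $h(x,y)\le h_1(x^*,y)$ fails at $x=z^o$, $y=x^o$ (there $|x-y|=1$, $x^*=0$, $|x^*-y|=2>\sqrt2$, so $h=4>1=h_1$), a harmless constant slip that your $h=K$ avoids, since $K(z^o,x^o)=1$. The rest of your verification (rotational invariance, the integrability exponents $(n-\alpha+1)at'<n$ and $(n-\alpha+1)(1-a)p'<n-1$, and the equality check at $(p_c,t_c)$) is correct.
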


\begin{proof}
Note that
\begin{eqnarray*}
&&\frac{1-|x-z^o|^2}{|x-y|^{n-\alpha+2}}=\frac{|y-z^o|^2-|x-z^o|^2}{|x-y|^{n-\alpha+2}}=\frac{|x-y|^2-2(y-x,z^o-x)}{|x-y|^{n-\alpha+2}}\\
&\le&\frac{|x-y|+2|z^o-x|}{|x-y|^{n-\alpha+1}}\le \frac{4}{|x-y|^{n-\alpha+1}}.
\end{eqnarray*}
Thus apply Lemma \ref{RYT-ball}  with $$h(x,y)=\frac{4}{|x-y|^{n-\alpha+1}}$$
and
 $$h_1(z,y)=\begin{cases}
1, \quad& \text{if}~|z-y|>\sqrt{2},\\
\frac{1}{(\frac{|z-y|}{2})^{n-\alpha+1}},\quad& \text{if}~|z-y|\le\sqrt{2},
\end{cases}$$
where $x\in B^n, y,z \in\partial B^n$, and $a=\frac{n-\alpha}{2(n-\alpha+1)}$.

It is easy to see that $\|h_1(z^1,\cdot)\|_{L^{(1-a)p^\prime}(\partial B^n)} = \|h_1(z^2,\cdot)\|_{L^{(1-a)p^\prime}(\partial B^n)}, ~\forall~ z^1,z^2\in \partial B^n$, and $h(x,y)\le h_1(x^*,y), ~\forall~ x\in B^n, y\in \partial B^n$.

To use  Lemma \ref{RYT-ball} , we still need to verify
$ h(\cdot, x^o)\in L^{at^\prime}(B^n), h(0, \cdot)\in L^{(1-a)p^\prime}(\partial B^n)$. But these follow from the facts that
$$(n-\alpha+1)at^\prime<n$$
and $$(n-\alpha+1)(1-a)p^\prime<n-1,$$
since $p>\frac{2(n-1)}{n+\alpha-4}, t>\frac{2n}{n+\alpha}$. We conclude the inequality \eqref{IYWT-ball}.
\end{proof}

\begin{rem}\label{rem-UHST}
%
(\rmnum1).
 By conformal transformation \eqref{confor-map}, it is easy to see that inequality \eqref{IYWT-ball} is also equivalent to
\begin{eqnarray}\nonumber
&&\int_{\R^n_+}\int_{\partial \R^n_+}\frac{x_ng(x)f(y)}{(|x^\prime-y|^2+x_n^2)^{\frac{n-\alpha+2}{2}}}\cdot\frac{1}{|(x^\prime,x_n+2)|^{n+\alpha-\frac{2n}{t}}}\cdot\frac{1}{|(y,2)|^{n+\alpha-4-\frac{2(n-1)}{p}}} dy dx\\\label{IYWT-UHST}
&&\le C(n,\alpha,p,t)\|f\|_{L^p(\partial \R^n_+)}\|g\|_{L^t(\R^n_+)}
\end{eqnarray}
for $f\in L^p(\partial \R^n_+), g\in L^t(\R^n_+)$.

(\rmnum2). From the above proof, it is easy to see that  inequality \eqref{IYWT-ball} still holds if  $p\ge\frac{2(n-1)}{n+\alpha-4}, t>\frac{2n}{n+\alpha}$ or $p>\frac{2(n-1)}{n+\alpha-4}, t\ge\frac{2n}{n+\alpha}$ (by taking a different $a\in(0,1)$ in the proof).

\end{rem}

\subsection{Sharp inequality in subcritical case}
Let
$$P_\alpha f(x)=\int_{\partial B^n}\frac{(1-|x-z^o|^2)f(y)}{|x-y|^{n-\alpha+2}}dy,~\forall~x\in B^n,$$
which is obvious continuous for $x\in B^n$ and
$$Q_\alpha g(y)=\int_{B^n}\frac{(1-|x-z^o|^2)g(x)}{|x-y|^{n-\alpha+2}}dx,~{\forall}~y\in \partial B^n.$$
Thus,  inequality \eqref{IYWT-ball} is equivalent to
\begin{equation}\label{IYWT-ball-equv}
\|P_\alpha f\|_{L^{t^\prime}(B^n)}\le C(n,\alpha,p,t)\|f\|_{L^p(\partial B^n)},
\end{equation}
or
\begin{equation}\label{IYWT-ball-equv-add}
\|Q_\alpha g\|_{L^{p^\prime}(\partial B^n)}\le C(n,\alpha,p,t)\|g\|_{L^t(B^n)}.
\end{equation}
The best constant $C_{s_2}(n,\alpha,p,t)$ in above inequalities is given by
\begin{equation} \label{sup-UHST-sub}
C_{s_2}(n,\alpha,p,t)=\sup\{\|P_\alpha f\|_{L^{t^\prime}(B^n)}\big\vert \|f\|_{L^p(\partial B^n)}=1\}.
\end{equation}
We first show that the above supremum can be achieved by some function $f\in L^p(\partial B^n), \|f\|_{L^p(\partial B^n)}=1$.


For simplicity, in the following we just consider $\frac{1}{p}+\frac{1}{t}>1$.
\begin{thm}\label{thm-exist-UHST-sub}
Assume that $2\le \alpha<n$, $p>\frac{2(n-1)}{n+\alpha-4}, t>\frac{2n}{n+\alpha}, \frac{1}{p}+\frac{1}{t}>1$. Then the supremum in \eqref{sup-UHST-sub} is achieved by
a nonnegative $f\in L^p(\partial B^n)$ with $|f\|_{L^p(\partial B^n)}=1$.
\end{thm}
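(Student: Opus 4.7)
The plan is to follow the standard direct method in the calculus of variations, exploiting the strict subcritical nature of the exponents to obtain enough compactness. Let $\{f_i\}\subset L^p(\partial B^n)$ be a maximizing sequence with $\|f_i\|_{L^p(\partial B^n)}=1$ and $\|P_\alpha f_i\|_{L^{t'}(B^n)}\to C_{s_2}(n,\alpha,p,t)$. Since $|P_\alpha f|\le P_\alpha|f|$, I may assume $f_i\ge 0$. Because $p>1$, $L^p(\partial B^n)$ is reflexive; so I extract a weakly convergent subsequence $f_i\rightharpoonup f^*\ge 0$ with $\|f^*\|_{L^p}\le 1$.

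Next, I would establish pointwise convergence $P_\alpha f_i(x)\to P_\alpha f^*(x)$ for every $x\in B^n$. For fixed $x\in B^n$, the kernel $y\mapsto (1-|x-z^o|^2)/|x-y|^{n-\alpha+2}$ is continuous and bounded on $\partial B^n$ (note that $x\in B^n$ is strictly interior, so $|x-y|$ is bounded below on $\partial B^n$, unless $x\to\partial B^n$, where the factor $1-|x-z^o|^2$ provides the compensation). More carefully, using $(1-|x-z^o|^2)/|x-y|^{n-\alpha+2}\le 4/|x-y|^{n-\alpha+1}$ as in Corollary \ref{RYWT-ball}, one checks this kernel lies in $L^{p'}(\partial B^n)$ uniformly on compact subsets of $B^n$, so weak convergence in $L^p$ yields the pointwise limit.

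Third, I would upgrade pointwise convergence to strong convergence in $L^{t'}(B^n)$. Here I invoke the subcritical strictness: since $p>\frac{2(n-1)}{n+\alpha-4}$ and $t>\frac{2n}{n+\alpha}$ with $1/p+1/t>1$, I can pick $\tilde t\in(\frac{2n}{n+\alpha},t)$ with $1/p+1/\tilde t\ge 1$, so by Remark \ref{rem-UHST}(ii) the bounded sequence $\{f_i\}$ satisfies
\begin{equation*}
\|P_\alpha f_i\|_{L^{\tilde t\,'}(B^n)}\le C(n,\alpha,p,\tilde t)
\end{equation*}
with $\tilde t\,'>t'$. This higher integrability combined with the pointwise convergence gives uniform integrability of $|P_\alpha f_i|^{t'}$; Vitali's convergence theorem then yields $P_\alpha f_i\to P_\alpha f^*$ strongly in $L^{t'}(B^n)$.

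Finally, $\|P_\alpha f^*\|_{L^{t'}}=C_{s_2}(n,\alpha,p,t)>0$ forces $f^*\not\equiv 0$, and the definition of $C_{s_2}$ together with $\|f^*\|_{L^p}\le 1$ forces $\|f^*\|_{L^p}=1$ (otherwise $f^*/\|f^*\|_{L^p}$ would strictly exceed $C_{s_2}$). The principal obstacle is the third step: pointwise convergence of $P_\alpha f_i$ is not enough on its own, and one must convert the strict subcriticality of the exponents into genuine uniform integrability via the slightly better subcritical estimate. The hypothesis $1/p+1/t>1$ (rather than merely $\ge 1$) is precisely what leaves room to choose such a $\tilde t$.
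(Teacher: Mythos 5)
Your proof is correct, and it takes a different route from the paper's at the crucial compactness step; both, however, hinge on the same underlying fact, namely that the strict subcriticality leaves room to bound $P_\alpha f_i$ in some $L^{s}(B^n)$ with $s>t'$ via Remark \ref{rem-UHST}(ii), after which the finite measure of $B^n$ converts that gain into compactness. The paper establishes precompactness of $\{P_\alpha f_j\}$ in $L^{t'}(B^n)$ directly: it splits $B^n$ into an interior region $\overline\Omega_r=\{|x-z^o|\le 1-r\}$, where the sequence $P_\alpha f_j$ is shown to be uniformly bounded and equicontinuous (Arzel\`a--Ascoli), and a thin boundary shell $B^n\setminus\overline\Omega_r$, where H\"older's inequality, the higher $L^{2n/(n-\alpha)}$ bound, and the small volume of the shell make the contribution negligible; the limit is then identified by duality ($\langle P_\alpha f_j-P_\alpha f,g\rangle=\langle f_j-f,Q_\alpha g\rangle$). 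You instead read off pointwise convergence of $P_\alpha f_i(x)$ for every interior $x$ directly from the weak $L^p$ convergence of $f_i$ tested against the kernel (which, for each fixed $x\in B^n$, belongs to $L^{p'}(\partial B^n)$), and then invoke Vitali's theorem using the uniform integrability of $|P_\alpha f_i|^{t'}$ coming from the $L^{\tilde t\,'}$ bound with $\tilde t\,'>t'$. This avoids the Arzel\`a--Ascoli argument and the explicit domain decomposition, and is arguably the more streamlined presentation; the paper's version has the advantage of making the mechanism near $\partial B^n$ completely transparent. One small point worth tightening in your write-up: for a fixed $x\in B^n$ the distance to $\partial B^n$ is bounded below, so the kernel in $y$ is actually bounded (not merely $L^{p'}$), and the ``compensation near the boundary'' remark is not needed for the pointwise-convergence step — it only matters, as you correctly exploit elsewhere, for the subcritical integral estimates.
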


\begin{proof}
Let $\{f_j\}$ be a nonnegative maximizing sequence  to \eqref{sup-UHST-sub} with $\|f_j\|_{L^p(\partial B^n)}=1$.

We first prove that the sequence $\{P_\alpha f_j\}$ is precompact in $L^{t^\prime}(B^n)$.

For fixed $1>r>0$ small, we take
$$\overline{\Omega}_r:=\{x\ : \ |x-z^o|\le 1-r\}.$$
Then for any $x\in \overline{\Omega}_r, y\in \partial B^n,$ we have $|x-y|\ge r$. So for any
$x\in \overline{\Omega}_r$,
$$|P_\alpha f_i(x)|\le r^{n-\alpha+2} \int_{\partial B^n}f(y)dy\le Cr^{n-\alpha+2},$$
which means that $P_\alpha f_i(x)$ is uniformly bounded in $\overline{\Omega}_r$ for any $i$.
On the other hand, for any $i$ and for any $z^1, z^2\in \overline{\Omega}_r$, by using the mean value theorem, there exists $z^\tau:=\tau z^1+(1-\tau )z^2$ for some $0\le \tau \le 1$ such that
\begin{eqnarray*}
|P_\alpha f_i(z^1)-P_\alpha f_i(z^2)|&\le& \int_{\partial B^n}\bigg|\frac{(1-|z^1-z^o|^2)}{|z^1-y|^{n-\alpha+2}}-\frac{(1-|z^2-z^o|^2)}{|z^2-y|^{n-\alpha+2}}\bigg|f(y)dy\\
&\le& C\int_{\partial B^n}\frac{|z^1-z^2|}{|z^\tau-y|^{n-\alpha+3}}f(y)dy\\
&\le& C\frac{1}{r^{n-\alpha+3}}|z^1-z^2|,
\end{eqnarray*}
which implies that $P_\alpha f_i(x)$ is equicontinuous in $\overline{\Omega}_r$. Therefore up to a subsequence there exists $P(x)\in C(\overline{\Omega}_r)$ such that
$$P_\alpha f_i(x)\to P(x)\in C(\overline{\Omega}_r).$$

Now by using \eqref{IYWT-ball-equv} and noticing   Remark \ref{rem-UHST} (\rmnum3), we have
\begin{eqnarray*}
&&\int_{B^n}|P_\alpha f_i(x)-P_\alpha f_j(x)|^{t^\prime}dx\\
&=&\int_{\overline{\Omega}_r}|P_\alpha f_i(x)-P_\alpha f_j(x)|^{t^\prime}dx+\int_{B^n\setminus\overline{\Omega}_r}|P_\alpha f_i(x)-P_\alpha f_j(x)|^{t^\prime}dx\\
&\le&\int_{\overline{\Omega}_r}|P_\alpha f_i(x)-P_\alpha f_j(x)|^{t^\prime}dx\\
&&+(\int_{B^n\setminus\overline{\Omega}_r}|P_\alpha f_i(x)-P_\alpha f_j(x)|^{\frac{2n}{n-\alpha}}dx)^{\frac{t^\prime}{\frac{2n}{n-\alpha}}}\cdot(\int_{B^n\setminus\overline{\Omega}_r}dx)^{1-\frac{t^\prime}{\frac{2n}{n-\alpha}}}\\
&\le&\int_{\overline{\Omega}_r}|P_\alpha f_i(x)-P_\alpha f_j(x)|^{t^\prime}dx+C\|f_i-f_j\|_{L^p(\partial B^n)}r^{1-\frac{t^\prime}{\frac{2n}{n-\alpha}}}\\
&\le&\int_{\overline{\Omega}_r}|P_\alpha f_i(x)-P_\alpha f_j(x)|^{t^\prime}dx+Cr^{1-\frac{t^\prime}{\frac{2n}{n-\alpha}}}.
\end{eqnarray*}
Letting $i,j\to \infty$ and then sending $r\to 0$, we conclude that
$$\int_{B^n}|P_\alpha f_i(x)-P_\alpha f_j(x)|^{t^\prime}dx\to 0,$$
that is, the sequence $\{P_\alpha f_j\}$ is precompact in $L^{t^\prime}(B^n)$.

Since  $\|f_j\|_{L^p(\partial B^n)}=1$,  there exists $f\in L^p(\partial B^n)$ such that $\|f\|_{L^p(\partial B^n)}\le 1$ and $f_j\rightharpoonup f $ weakly in $L^p (\partial B^n)$, up to a subsequence.

Take any  function $g(x)\in L^t(B^n)$, we have $Q_\alpha g(y)\in L^{p^\prime} (\partial B^n)$. Then
$$<P_\alpha f_j-P_\alpha f, g>=<f_j-f, Q_\alpha g> \to 0~\text{as}~i\to\infty,$$
which implies that $P_\alpha f_j-P_\alpha f$ converges weakly to $0$, and then also converges strongly to $0$  in $L^{t^\prime}(B^n)$ since $\{P_\alpha f_j\}$ is precompact in $L^{t^\prime}(B^n)$.  It follows that  $\|f\|_{L^p(\partial B^n)}=1$ and $f$ is a non-negative maximizer of \eqref{sup-UHST-sub}.
\end{proof}

It is standard to show that $f(z)$ is positive, thus satisfies the following  Euler-Lagrange equation:
\begin{equation}\label{equ-HLST-sub}
f^{p-1}(z)=\int_{B^n}\frac{(1-|\xi-z^o|^2)(P_\alpha f(\xi))^{t^\prime-1}}{|\xi-z|^{n-\alpha+2}}d\xi,~\forall~z\in\partial B^n.
\end{equation}
So $\ P_\alpha f(\xi) $ is also   positive.

Let $y=z^{x^o}\in \partial \R^n_+, x=\xi^{x^o}\in \R^n_+$, where $z\in \partial B^n, \xi\in B^n$.  We have


\begin{eqnarray*}
P_\alpha f(x^{x^o})=2\int_{\partial \R^n_+}\frac{x_n f(\zeta^{x^o})}{|x-\zeta|^{n-\alpha+2}}\big(\frac{2}{|x-x^o|}\big)^{-(n-\alpha)}\big(\frac{2}{|\zeta-x^o|}\big)^{n+\alpha-4}d\zeta.
\end{eqnarray*}
Thus \eqref{equ-HLST-sub} becomes
\begin{eqnarray*}
\big(f(y^{x^o})\big)^{p-1}&=&2^{t^\prime}\int_{\R^n_+}\big(\frac{|y-x^o|}2\big)^{n-\alpha+2}\frac{x_n}{|x-y|^{n-\alpha+2}}\big(\frac 2{|x-x^o|}\big)^{n+\alpha}\\
& &\cdot \big(\int_{\partial \R^n_+}\frac{x_n f(\zeta^{x^o})}{|x-\zeta|^{n-\alpha+2}}\big(\frac{2}{|\zeta-x^o|}\big)^{n+\alpha-4}\big(\frac 2{|x-x^o|}\big)^{-(n-\alpha)}d\zeta\big)^{t^\prime-1}dx.
\end{eqnarray*}
Denote $$u(y):=(\frac{2}{|y-x^o|})^{\frac{2(n-1)}{p^\prime}}(f(y^{x^o}))^{p-1}$$ and
$$v(x):=\int_{\partial \R^n_+}\frac{x_n u^{\frac{1}{p-1}}(\zeta)}{|x-\zeta|^{n-\alpha+2}}(\frac{2}{|\zeta-x^o|})^{n+\alpha-4-\frac{2(n-1)}{p}}(\frac{2}{|x-x^o|})^{\frac{2n}{t^\prime}-(n-\alpha)}d\zeta.$$ For simplicity, we also denote $\kappa:=t^\prime-1>0,~ \theta:=\frac{1}{p-1}>0,~ s_1:=n+\alpha-4-\frac{2(n-1)}{p}>0,~  s_2:=n+\alpha-\frac{2n}{t}>0$ since $p>\frac{2(n-1)}{n+\alpha-4}, t>\frac{2n}{n+\alpha}$. Thus single equation \eqref{equ-HLST-sub} can be rewritten as, up to a constant,
$$
\begin{cases}
u(y)=\int_{\R^n_+}\frac{ x_nv^\kappa(x)}{|x-y|^{n-\alpha+2}}
\frac{1}{|y-x^o|^{s_1}}\frac 1{|x-x^o|^{s_2}}dx, ~y\in\partial \R^n_+,\\
v(x)=\int_{\partial \R^n_+}\frac{x_nu^\theta(y)}{|x-y|^{n-\alpha+2}}\frac{1}{|y-x^o|^{s_1}}\frac 1{|x-x^o|^{s_2}}dy,~x\in \R^n_+,
\end{cases}
$$
that is,
\begin{equation}\label{equ2-HLST-sub}
\begin{cases}
u(y)=\int_{\R^n_+}\frac{x_nv^\kappa(x)}{(|x^\prime-y|^2+x_n^2)^{\frac{n-\alpha+2}{2}}}
\frac{1}{|(y,2)|^{s_1}}\frac 1{|(x^\prime,x_n+2)|^{s_2}}dx, ~y\in\partial \R^n_+,\\
v(x)=\int_{\partial \R^n_+}\frac{x_nu^\theta(y)}{(|x^\prime-y|^2+x_n^2)^{\frac{n-\alpha+2}{2}}}\frac{1}{|(y,2)|^{s_1}}\frac 1{|(x^\prime,x_n+2)|^{s_2}}dy,~x\in \R^n_+,
\end{cases}
\end{equation}
where $u(y)\in L^{\theta+1}(\partial \R^n_+)=L^{p'}(\partial \R^n_+),v(x)\in L^{\kappa+1}(\R^n_+)=L^{t^\prime}(\R^n_+)$ and $u(y), v(x)$ are positive functions. Here and in the following we use $y$ to represent points in $\partial \R^n_+$, as well as points in $\R^{n-1}$ .

Next, we shall classify all measurable positive solutions to the system \eqref{equ2-HLST-sub}.

\begin{prop}\label{thm-sym-UHST-sub}
Let $(u,v)$ be a pair of positive solutions to system \eqref{equ2-HLST-sub} with $u(y)\in L^{\theta+1}(\partial \R^n_+),v(x)\in L^{t^\prime}(\R^n_+)$.
For $2\le \alpha<n$, if $p>\frac{2(n-1)}{n+\alpha-4}, t>\frac{2n}{n+\alpha}, \frac{1}{p}+\frac{1}{t}>1$, then $u(y), y\in \partial \R^n_+$ and $v(x), x\in \R^n_+$ must be symmetric with respect to $x_n-$axis.
\end{prop}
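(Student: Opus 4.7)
The strategy mirrors the proof of Proposition \ref{thm-sym-UHS-sub} via the method of moving planes, with the direction of the monotonicity inequalities reversed because here $\kappa,\theta>0$ (rather than $<0$ as in Section \ref{Section 2}). First, I would absorb the outside factors by setting $\tilde u(y):=|(y,2)|^{s_1}u(y)$ and $\tilde v(x):=|(x',x_n+2)|^{s_2}v(x)$; using $|(y,2)|=|y-x^o|$ and $|(x',x_n+2)|=|x-x^o|$ with $x^o=(0,-2)$, the system \eqref{equ2-HLST-sub} becomes
\begin{equation*}
\tilde u(y)=\int_{\R^n_+}\frac{x_n\,\tilde v^\kappa(x)}{|x-y|^{n-\alpha+2}|x-x^o|^{\tilde s_2}}\,dx,\qquad \tilde v(x)=\int_{\partial\R^n_+}\frac{x_n\,\tilde u^\theta(y)}{|x-y|^{n-\alpha+2}|y-x^o|^{\tilde s_1}}\,dy,
\end{equation*}
with $\tilde s_1:=s_1(1+\theta)>0$ and $\tilde s_2:=s_2(1+\kappa)>0$. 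This has the structure of \eqref{equ2-UHS-sub} modulo the extra $x_n$ factor and the exponent $n-\alpha+2$ in the kernel. The analogues of Lemmas \ref{lm-UHS-mmp-es}--\ref{estD-UHS-uv} follow by the same dominated-convergence arguments, yielding $C^{-1}(1+|y|)^{\alpha-n-2}\le\tilde u(y)\le C(1+|y|)^{\alpha-n-2}$, differentiability in tangential variables, and
\begin{equation*}
\lim_{|y|\to\infty}|y|^{n-\alpha+2}\Bigl(\nabla\tilde u(y)\cdot y+\frac{n-\alpha+2}{2}\tilde u(y)\Bigr)=\frac{\alpha-n-2}{2}\,a,
\end{equation*}
which is \emph{negative} since $\alpha<n$; hence $|y|^{(n-\alpha+2)/2}\tilde u(y)$ is radially decreasing at infinity, opposite to Section \ref{Section 2}.

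For $\lambda<0$, reflecting across $T_\lambda=\{x_1=\lambda\}$ and splitting the domain and changing variables as in Lemma \ref{lm-UHS-mmp-UV} gives the identity
\begin{equation*}
\tilde u(y)-\tilde u_\lambda(y)=\int_{\Sigma_{\lambda,n}}\Bigl(\frac{x_n}{|x-y|^{n-\alpha+2}}-\frac{x_n}{|x-y_\lambda|^{n-\alpha+2}}\Bigr)\Bigl(\frac{\tilde v^\kappa(x)}{|x-x^o|^{\tilde s_2}}-\frac{\tilde v_\lambda^\kappa(x)}{|x_\lambda-x^o|^{\tilde s_2}}\Bigr)dx,
\end{equation*}
and the symmetric identity for $\tilde v-\tilde v_\lambda$. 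For $y\in\Sigma_{\lambda,n-1}$, $x\in\Sigma_{\lambda,n}$, the first parenthesis is $\ge0$ since $|x-y|\le|x-y_\lambda|$. The crucial sign observation is that $\lambda<0$ and $x_1\le\lambda$ force $|x_\lambda-x^o|\le|x-x^o|$; combined with $\kappa>0$, assuming $\tilde v\le\tilde v_\lambda$ on $\Sigma_{\lambda,n}$ one gets both $\tilde v^\kappa(x)\le\tilde v_\lambda^\kappa(x)$ and $|x-x^o|^{-\tilde s_2}\le|x_\lambda-x^o|^{-\tilde s_2}$, so the second parenthesis is $\le0$ and the product is $\le0$. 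Hence $\tilde u\le\tilde u_\lambda$ on $\Sigma_{\lambda,n-1}$, and the coupled inequalities $\tilde u\le\tilde u_\lambda$, $\tilde v\le\tilde v_\lambda$ close a self-consistent bootstrap \emph{in the opposite direction} to Section \ref{Section 2}.

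The rest of the argument is a direct adaptation of Lemmas \ref{lm-UHS-mmp-start} and \ref{lm-UHS-mmp-infy} with every inequality flipped. For sufficiently negative $\lambda$, the lower bound on $\tilde u$ and the radial monotonicity furnish $\tilde u\le\tilde u_\lambda$, $\tilde v\le\tilde v_\lambda$ on $\Sigma_\lambda$. Setting $\overline\lambda:=\sup\{\mu<0:\tilde u\le\tilde u_\lambda\text{ and }\tilde v\le\tilde v_\lambda\text{ on }\Sigma_\lambda\text{ for all }\lambda\le\mu\}$, one supposes $\overline\lambda<0$, promotes the inequalities to strict interior inequalities at $\overline\lambda$ via the integral identity, and runs the two-region estimate of Lemma \ref{lm-UHS-mmp-infy} — a quantitative gap $\tilde v_\lambda^\kappa-\tilde v^\kappa\ge\delta_1>0$ on a compact collar $\{\overline\lambda-3\le x_1\le\overline\lambda-2\}$ supplies the main term while the decay at infinity controls the tail — to contradict maximality. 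Thus $\overline\lambda=0$; running the argument from $+\infty$ and along each coordinate $x_2,\dots,x_{n-1}$ delivers symmetry of $\tilde u,\tilde v$ about the $x_n$-axis, and dividing by the axially symmetric factors $|y-x^o|^{s_1}$, $|x-x^o|^{s_2}$ transfers the symmetry back to $(u,v)$. The main obstacle is precisely the sign analysis above — confirming that the reflection of the weight $|x-x^o|^{-\tilde s_2}$ cooperates with (rather than cancels) the change in $\tilde v^\kappa$ — together with adapting the quantitative collar estimates to the kernel $x_n/|x-y|^{n-\alpha+2}$, where the extra $x_n$ factor is harmlessly bounded by $|x|$ on any compact set.
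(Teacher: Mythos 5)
Your strategy is a genuinely different route from the paper's. What the paper actually does in Section~\ref{Section 3} is the \emph{integral form} of the method of moving planes (in the spirit of Chen--Li--Ou), not an adaptation of the pointwise arguments of Section~\ref{Section 2}. After the reflection identity of Lemma~\ref{lm-UHST-mmp-UV}, the paper controls the norms of $u-u_\lambda$ and $v-v_\lambda$ on the sets $\Sigma^u_{\lambda}=\{u>u_\lambda\}$ and $\Sigma^v_{\lambda}=\{v>v_\lambda\}$ by chaining the HLS-type estimates \eqref{IYWT-UHST-1}--\eqref{IYWT-UHST-2}: since $u\in L^{\theta+1}(\partial\R^n_+)$ and $v\in L^{t'}(\R^n_+)$, the factors $\|u\|^{\theta-1}_{L^{\theta+1}(\Sigma^u_\lambda)}\|v\|^{\kappa-1}_{L^{t'}(\Sigma^v_\lambda)}$ (or suitable fractional-power variants when $\kappa\le 1$ or $\theta\le 1$) can be driven below $1$ for $\lambda$ very negative (Lemma~\ref{lm-UHST-mmp-start}), and again for $\lambda$ slightly past $\overline\lambda$ after removing a large ball and a thin collar (Lemma~\ref{lm-UHST-mmp-infy}); this forces the bad sets to be null. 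That machinery works because here $\kappa,\theta>0$ and the ambient spaces are honest $L^p$ spaces; it uses only continuity of $u,v$ (Lemma~\ref{lm-UHST-regu}) and no pointwise asymptotics or gradients. The hypothesis $1/p+1/t>1$, equivalently $\kappa\theta>1$, enters exactly there, in the two case-splits permitting the chained H\"older estimates.

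Your opening substitution $\tilde u=|y-x^o|^{s_1}u$, $\tilde v=|x-x^o|^{s_2}v$, the resulting exponents $\tilde s_1=s_1(1+\theta)$, $\tilde s_2=s_2(1+\kappa)$, and the sign analysis are all correct; the observation that, for $\lambda<0$ and $x\in\Sigma_{\lambda,n}$, both the kernel factor and the factor $\tilde v^\kappa/|x-x^o|^{\tilde s_2}$ vary in concert so that the product in the reflection identity has a sign is exactly the right pivot. The gap is in the sentence ``the analogues of Lemmas~\ref{lm-UHS-mmp-es}--\ref{estD-UHS-uv} follow by the same dominated-convergence arguments.'' They do not follow for free. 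Those lemmas were proved under $\alpha>n$, $\kappa,\theta<0$: the set $E$ used for the two-sided pointwise bound was $\{v<R\}$, which has full measure; with $\kappa>0$ one must instead work with $\{v>R\}$ and the lower bound requires a different argument. The kernel now carries the factor $x_n$, which changes the at-infinity limits and the local integrability checks (notably near $\partial\R^n_+$, where $x_n\to 0$). Most importantly, your argument needs differentiability of $\tilde u,\tilde v$ to run the Fatou/gradient steps of Lemmas~\ref{lm-UHS-mmp-start} and~\ref{lm-UHS-mmp-infy}; in Section~\ref{Section 3} the paper proves only continuity (Lemma~\ref{lm-UHST-regu}), and establishing differentiability of $u$ on $\partial\R^n_+$ for the Poisson-type kernel $x_n|x-y|^{-(n-\alpha+2)}$ is a separate task, not covered by the elementary inequality used in Lemma~\ref{lm-UHS-regu}. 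Finally, your sketch never invokes $1/p+1/t>1$. Since $\kappa\theta>1$ is demonstrably essential to the paper's proof, its absence from your argument is a reliable warning that the heart of the matter has been assumed rather than proved. The structural idea may well be workable, but as written the proposal omits essentially all the analytic content of the proposition.
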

Thus we obtain

\begin{prop}\label{thm-sol-UHST-sub}
For $2\le \alpha<n$, if $p>\frac{2(n-1)}{n+\alpha-4}, t>\frac{2n}{n+\alpha}, \frac{1}{p}+\frac{1}{t}>1$, $f\in L^p(\partial B^n)$,  then the positive solution to \eqref{equ-HLST-sub} must be a constant function.
\end{prop}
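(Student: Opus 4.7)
The plan is to imitate the proof of Proposition \ref{thm-sol-UHS-sub} in the previous section, transferring symmetry information from the half-space formulation back to $\partial B^n$ and then exploiting the rotational symmetry of the kernel on the ball. First I would apply Proposition \ref{thm-sym-UHST-sub}: the pair $(u,v)$ that was manufactured from $f$ via the conformal map $T_{x^o}$ of \eqref{confor-map} is symmetric about the $x_n$-axis. Since the change of variables $y = z^{x^o}$ sends the spheres $\{|y-x^o|=c\}$ in $\partial B^n$ onto horizontal cross-sections of $\partial \R^n_+$ translated suitably, the $x_n$-axis symmetry of $u$ translates into the statement that $f(y^1)=f(y^2)$ whenever $y^1,y^2\in\partial B^n$ satisfy $|y^1-x^o|=|y^2-x^o|$.

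The next step is to upgrade this ``one-pole'' symmetry into full constancy by changing the pole of the conformal map. Given arbitrary distinct $y^1,y^2\in\partial B^n$, pick $x^1\in\partial B^n$ with $|y^1-x^1|=|y^2-x^1|$ (this is possible: $x^1$ lies on the great sphere of $\partial B^n$ equidistant from $y^1$ and $y^2$). Let $\mathcal T$ be a rotation about the center $z^o$ of $B^n$ with $\mathcal T x^1=x^o$, and define the alternative conformal map
\[
y=\widehat{z^{x^1}}:=\frac{2^2(z'-x^o)}{|z'-x^o|^2}+x^o,\qquad z'=\mathcal T z.
\]
Because the kernel $(1-|x-z^o|^2)/|x-y|^{n-\alpha+2}$ is invariant under rotations about $z^o$, pulling equation \eqref{equ-HLST-sub} back through this map yields \emph{the same} system \eqref{equ2-HLST-sub} that was analyzed before, but with the roles of $x^o$ and $x^1$ interchanged. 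Applying Proposition \ref{thm-sym-UHST-sub} to this rotated system yields the symmetry $f(y^1)=f(y^2)$ because $|y^1-x^1|=|y^2-x^1|$. Since $y^1,y^2$ were arbitrary, $f$ is constant on $\partial B^n$.

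The only slightly delicate point I would want to check explicitly is the invariance claim in the middle step, i.e.\ that applying $\mathcal T$ followed by $T_{x^o}$ really reproduces \eqref{equ2-HLST-sub}. This reduces to two routine facts: (i) $\mathcal T$ is an isometry of $\R^n$ fixing $z^o$, so the kernel on the ball is preserved; (ii) the Jacobian factors and the weights $|x-x^o|^{-s_2}$, $|y-x^o|^{-s_1}$ appearing in \eqref{equ2-HLST-sub} come solely from the conformal-map computation that led to \eqref{equ2-HLST-sub} in the first place, and the same computation (with $x^1$ in place of $x^o$) produces identical factors thanks to $\mathcal T x^1 = x^o$. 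I do not expect any genuine obstacle here; it is the same bookkeeping as in Section \ref{Section 2}. With the equation invariant, the symmetry conclusion of Proposition \ref{thm-sym-UHST-sub} carries over verbatim, and constancy of $f$ follows.
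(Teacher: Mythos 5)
Your proposal is correct and is essentially the same argument the paper uses: the paper just says the proof is identical to that of Proposition~\ref{thm-sol-UHS-sub}, and that proof is precisely the pole-changing rotation argument you describe. Your observation that the kernel $(1-|x-z^o|^2)/|x-y|^{n-\alpha+2}$ is invariant under rotations of $\R^n$ fixing $z^o$ is the right bookkeeping check to make the change of pole legitimate.
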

\begin{proof}
We omit the  proof here since it is the same as that of Proposition \ref{thm-sol-UHS-sub}.
\end{proof}

Now we focus on the proof of Propsition \ref{thm-sym-UHST-sub} via the method of moving planes.

As in Section \ref{Section 2}, we denote $x=\{x_1,x_2,\cdots,x_n\}\in\R^n$ and let $$T_\lambda=\{x\in\R^n \, : \,x_1=\lambda\},x_\lambda=\{2\lambda-x_1,x_2,\cdots,x_n\},x^\prime_\lambda=\{2\lambda-x_1,x_2,\cdots,x_{n-1}\},$$
$$\Sigma_{\lambda,n}=\{x\in\R^n_+ \, : \, x_1\le\lambda\}, \Sigma_{\lambda,n-1}=\{x\in\partial \R^n_+ \ : \ x_1\le\lambda\}.$$
We also denote
$$u_\lambda (y):=u(y_\lambda),~\forall~y\in \partial\R^n_+, \  \ \ \ \
v_\lambda (x):=v(x_\lambda),~\forall~x\in \R^n_+.$$

We need some lemmas.
\begin{lm}\label{lm-UHST-regu}
For $ 2\le \alpha<n$, $p>\frac{2(n-1)}{n+\alpha-4}, t>\frac{2n}{n+\alpha}, \frac{1}{p}+\frac{1}{t}>1$, let $(u,v)$ be a pair of positive solutions to system \eqref{equ2-HLST-sub} with $u(y)\in L^{\theta+1}(\partial \R^n_+),v(x)\in L^{t^\prime}(\R^n_+)$. Then  $u(y), y\in \partial \R^n_+$ and $v(x), x\in \overline{\R^n_+}$ are continuous.
\end{lm}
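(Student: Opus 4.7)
My plan is to prove continuity of $u$ on $\partial\R^n_+$ and of $v$ on $\overline{\R^n_+}$ by applying the Lebesgue Dominated Convergence Theorem to the defining formulas in \eqref{equ2-HLST-sub}. I focus on $u$; the argument for $v$ on $\R^n_+$ is analogous, while continuity of $v$ up to the boundary follows from the explicit $x_n$ prefactor in its defining integrand, which forces $v(x)\to 0=v(x_0',0)$ as $x_n\to 0^+$ (the $x_n^{\alpha-1}$ scaling obtained by the substitution $y=x'+x_n s$ in the $y$-integral handles this, using $\alpha\ge 2$). Fix $y^0\in \partial\R^n_+$ with $u(y^0)<\infty$ (which holds a.e.\ since $u\in L^{\theta+1}$), and write
\begin{equation*}
u(y) = \int_{\R^n_+} F(x,y)\,v^\kappa(x)\,dx,\qquad
F(x,y):= \frac{x_n}{(|x'-y|^2+x_n^2)^{(n-\alpha+2)/2}\,|(y,2)|^{s_1}\,|(x',x_n+2)|^{s_2}}.
\end{equation*}
For small $\delta>0$ and $|y-y^0|<\delta$ I split $\R^n_+$ into the far region $\R^n_+\setminus B(y^0,2\delta)$ and the near region $B(y^0,2\delta)\cap\R^n_+$.

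On the far region the triangle inequality gives $|x-y|\ge\tfrac12|x-y^0|$, whence $F(x,y)\le C\,F(x,y^0)$ uniformly in $y$; since $F(\cdot,y^0)v^\kappa$ is integrable by the finiteness of $u(y^0)$, DCT yields continuity of the far piece at $y^0$. The main task is to show
\begin{equation*}
\sup_{|y-y^0|<\delta}\int_{B(y^0,2\delta)\cap\R^n_+} F(x,y)\,v^\kappa(x)\,dx \ \longrightarrow\ 0 \qquad (\delta\to 0).
\end{equation*}
Here the hypothesis $\alpha\ge 2$ is crucial: in polar coordinates $x=y+r\omega$ with $\omega_n>0$ one has $x_n=r\omega_n$, so the bare kernel $x_n/|x-y|^{n-\alpha+2}$ equals $\omega_n\,r^{-(n-\alpha+1)}$, and combined with the Jacobian $r^{n-1}$ the radial density becomes $r^{\alpha-2}$, integrable at $r=0$. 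I then apply Hölder on the half-sphere $S^{n-1}_+$ (using $\kappa t=t'$ so that $v^\kappa$ pairs with $L^t$) and a Hölder in $r$ exploiting the conservation
\begin{equation*}
\int_0^{2\delta}\!\int_{S^{n-1}_+}v^{t'}(y+r\omega)\,r^{n-1}\,d\omega\,dr \le \|v\|_{L^{t'}(B(y^0,3\delta)\cap\R^n_+)}^{t'},
\end{equation*}
so that, together with the subcritical slack $1/p+1/t>1$, the absolute continuity of the Lebesgue integral forces the near-region integral to $0$ as $\delta\to 0$.

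The main obstacle is the near-region estimate. The kernel $F(\cdot,y)$ is \emph{not} in $L^{t'}$ locally throughout the full subcritical range (that would require $t'<n/(n-\alpha+1)$, which fails near the critical line), so one cannot pair it against $v^\kappa\in L^t$ by a single Hölder. The workaround is the spherical decomposition above, which extracts the angular factor $\omega_n$—reflecting the Poisson-like structure of the kernel—and the $r^{\alpha-2}$ gain from the radial Jacobian; the subcritical slack $1/p+1/t>1$ then closes the estimate without needing the full local $L^{t'}$ bound on the kernel. Once continuity is established, the differentiability statements (left to subsequent lemmas in the paper) follow by Lebesgue differentiation under the integral sign, distinguishing the cases $\alpha-n+1\ge 1$ and $0<\alpha-n+1<1$ exactly as in the proof of Lemma~\ref{lm-UHS-regu}.
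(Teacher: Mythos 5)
The paper does not give a proof of this lemma; it simply refers the reader to Theorem~1.3 in \cite{Li}, Propositions~5.2--5.3 in \cite{HWY2008}, and Theorem~4.2 in \cite{DZ-2}, all of which use a regularity-lifting/bootstrap argument built on the coupled structure of the system and the $L^p$-boundedness of the integral operators (i.e.\ estimates of the form \eqref{IYWT-UHST-1}--\eqref{IYWT-UHST-2}). Your proposal takes a different, direct route: a far/near split of the defining integral and DCT. This route has a genuine gap in the near-region estimate. In polar coordinates around $y$ the kernel $x_n|x-y|^{-(n-\alpha+2)}$ equals $\omega_n r^{-(n-\alpha+1)}$, so its local $L^s$ norm on $B(y,\rho)\cap\R^n_+$ is finite if and only if $s(n-\alpha+1)<n$, i.e.\ $s<n/(n-\alpha+1)$. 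Since you pair against $v^\kappa\in L^t$ (using $\kappa t=t'$), closing by H\"older or by the radial H\"older you sketch requires the kernel in $L^{t'}_{\mathrm{loc}}$, i.e.\ $t'<n/(n-\alpha+1)$, equivalently $t>n/(\alpha-1)$. But $n/(\alpha-1)>2n/(n+\alpha)$ for every $\alpha<n$, so the hypothesis $t>2n/(n+\alpha)$ does not supply this, and in fact for $\alpha=2$ the constraint $\tfrac1p+\tfrac1t>1$ forces $t<2(n-1)/n<n=n/(\alpha-1)$, so no admissible $t$ satisfies your needed condition. The slack $\tfrac1p+\tfrac1t>1$ only gives an \emph{upper} bound on $t$ and cannot rescue a lower bound; and the exponent $p$ does not enter your near-piece estimate at all. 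The angular H\"older does not help either, since $\omega_n$ is merely bounded on the compact half-sphere and gives no radial gain. For a generic $v\in L^{t'}$ the near-region integral can in fact be infinite (by duality, since the kernel is not in $L^{t'}_{\mathrm{loc}}$), so one must exploit the second equation of \eqref{equ2-HLST-sub} to upgrade the integrability of $v$ near the relevant point before applying a pairing argument; this is exactly what the cited bootstrap proofs do.

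A secondary issue: your claim that $v(x_0',0)=0$ is false for $\alpha=2$. The substitution $y=x'+x_n s$ extracts a factor $x_n^{\alpha-2}$ (not $x_n^{\alpha-1}$), and for $\alpha=2$ the kernel $x_n(|x'-y|^2+x_n^2)^{-n/2}$ is the Poisson kernel, so the limit as $x_n\to 0^+$ is a nonzero multiple of $u^\theta(x_0')$ (times the weight factors), not $0$. The lemma covers $\alpha=2$, so this case must be handled. I suggest abandoning the direct DCT approach and instead reproducing the regularity-lifting argument: first show $u,v\in L^\infty_{\mathrm{loc}}$ by iterating $u=\tilde T(v^\kappa)$, $v=\tilde S(u^\theta)$ together with the operator bounds, splitting off the part of $v$ (resp.\ $u$) exceeding a large level $M$ to get a small-norm contraction; local boundedness plus the explicit kernel structure then yields continuity on $\partial\R^n_+$ and $\overline{\R^n_+}$ respectively.
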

\begin{proof}
The proof is  standard. So we omit it here. See, for example,   the proofs of  Theorem
$1.3$ in \cite{Li},  Proposition $5.2$ and $5.3$ in \cite{HWY2008}, or Theorem 4.2 in \cite{DZ-2}.
\end{proof}

\begin{lm}\label{lm-UHST-mmp-UV}
Let $(u,v)$ be a pair of  positive solutions to system \eqref{equ2-HLST-sub}. Then we have
\begin{eqnarray}\label{UHST-mmp-U}
&&u(y)-u_\lambda(y)\\\nonumber
&=&\int_{\Sigma_{\lambda,n}}\frac{1}{|(y,2)|^{s_1}}\big(\frac{x_n}{(|x^\prime-y|^2+x_n^2)^{\frac{n-\alpha+2}{2}}}\frac{v^\kappa(x)}{|(x^\prime,x_n+2)|^{s_2}}\\\nonumber
&&+\frac{x_n}{(|x^\prime_\lambda-y|^2+x_n^2)^{\frac{n-\alpha+2}{2}}}\frac{v_\lambda^\kappa(x)}{|(x^\prime_\lambda,x_n+2)|^{s_2}}\big)dx\\\nonumber
&&-\int_{\Sigma_{\lambda,n}}\frac{1}{|(y_\lambda,2)|^{s_1}}\big(\frac{x_n}{(|x^\prime-y|^2+x_n^2)^{\frac{n-\alpha+2}{2}}}\frac{v_\lambda^\kappa(x)}{|(x^\prime_\lambda,x_n+2)|^{s_2}}\\\nonumber
&&+\frac{x_n}{(|x^\prime_\lambda-y|^2+x_n^2)^{\frac{n-\alpha+2}{2}}}\frac{v^\kappa(x)}{|(x^\prime,x_n+2)|^{s_2}}\big)dx, ~y\in\partial \R^n_+,
 \end{eqnarray}
and
\begin{eqnarray}\label{UHST-mmp-V}
&&v(x)-v_\lambda (x)\\\nonumber
&=&\int_{\Sigma_{\lambda,n}}\frac{1}{|(x^\prime,x_n+2)|^{s_2}}\big(\frac{x_n}{(|x^\prime-y|^2+x_n^2)^{\frac{n-\alpha+2}{2}}}\frac{u^\theta(y)}{|(y,2)|^{s_1}}\\\nonumber
&&+\frac{x_n}{(|x^\prime_\lambda-y|^2+x_n^2)^{\frac{n-\alpha+2}{2}}}\frac{u_\lambda^\theta(y)}{|(y_\lambda,2)|^{s_1}}\big)dy\\\nonumber
&&-\int_{\Sigma_{\lambda,n}}\frac{1}{|(x^\prime_\lambda,x_n+2)|^{s_2}}\big(\frac{x_n}{(|x^\prime-y|^2+x_n^2)^{\frac{n-\alpha+2}{2}}}\frac{u_\lambda^\theta(y)}{|(y_\lambda,2)|^{s_1}}\\\nonumber
&&+\frac{x_n}{(|x^\prime_\lambda-y|^2+x_n^2)^{\frac{n-\alpha+2}{2}}}\frac{u^\theta(y)}{|(y,2)|^{s_1}}\big)dy, ~x\in \R^n_+.
\end{eqnarray}
\end{lm}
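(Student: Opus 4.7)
The plan is to derive both identities by the same simple maneuver: split the defining integral across the reflection hyperplane $T_\lambda$, and then push the ``far'' half onto $\Sigma_{\lambda,n}$ via the change of variable $x\mapsto x_\lambda$. This is the same technique used in the proof of Lemma~\ref{lm-UHS-mmp-UV} in Section~\ref{Section 2}, and the only substantive work is careful bookkeeping of the distance factors that are not invariant under the reflection.

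Concretely, for \eqref{UHST-mmp-U} I would first split $\R^n_+=\Sigma_{\lambda,n}\cup(\R^n_+\setminus\Sigma_{\lambda,n})$ in the representation of $u(y)$ coming from \eqref{equ2-HLST-sub}, and in the second integral substitute $\tilde x=x_\lambda$. Since this reflection preserves Lebesgue measure, fixes the vertical coordinate $x_n$, and sends $x_1$ to $2\lambda-x_1$, one has $v(x)=v_\lambda(\tilde x)$, $|x^\prime-y|^2+x_n^2=|\tilde x^\prime_\lambda-y|^2+\tilde x_n^2$, and $|(x^\prime,x_n+2)|=|(\tilde x^\prime_\lambda,\tilde x_n+2)|$. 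Renaming $\tilde x$ back to $x$ and recombining then yields
\[
u(y)=\int_{\Sigma_{\lambda,n}}\frac{1}{|(y,2)|^{s_1}}\Bigl[\frac{x_n v^\kappa(x)}{(|x^\prime-y|^2+x_n^2)^{(n-\alpha+2)/2}|(x^\prime,x_n+2)|^{s_2}}+\frac{x_n v_\lambda^\kappa(x)}{(|x^\prime_\lambda-y|^2+x_n^2)^{(n-\alpha+2)/2}|(x^\prime_\lambda,x_n+2)|^{s_2}}\Bigr]dx.
\]
Applying the same procedure to $u_\lambda(y)=u(y_\lambda)$, the outer factor $|(y,2)|^{s_1}$ becomes $|(y_\lambda,2)|^{s_1}$, and the isometry identities $|x^\prime-y_\lambda|=|x^\prime_\lambda-y|$ and $|x^\prime_\lambda-y_\lambda|=|x^\prime-y|$ show that the two Poisson-type kernels get interchanged inside the bracket, while the attached factors $v^\kappa$, $v_\lambda^\kappa$, $|(x^\prime,x_n+2)|^{s_2}$, $|(x^\prime_\lambda,x_n+2)|^{s_2}$ stay paired as before. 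Subtracting the two representations then produces \eqref{UHST-mmp-U}. The identity \eqref{UHST-mmp-V} would be proved line-by-line in the same way, this time starting from the representation of $v(x)$ as an integral over $\partial\R^n_+$ and reflecting the portion supported on $\partial\R^n_+\setminus\Sigma_{\lambda,n-1}$.

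The only point that requires care is that neither $|(y,2)|$ nor $|(x^\prime,x_n+2)|$ is invariant under the reflection through $T_\lambda$ (unless $\lambda=0$), because the anchor point $x^o=(0,-2)$ does not lie in $T_\lambda$. As a consequence, the four terms in the final expression cannot be collapsed into a neat single difference, the way one can in the translation-invariant setting of the classical HLS; they must be kept exactly as displayed in the lemma. Beyond this piece of bookkeeping, the computation is purely mechanical and presents no real obstacle.
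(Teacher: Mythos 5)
Your split-and-reflect argument is exactly the ``direct computation'' the paper omits, and it is carried out correctly: the key Jacobian/invariance facts ($x_n$ fixed, $|x'_\lambda-y_\lambda|=|x'-y|$, $v(x_\lambda)=v_\lambda(x)$) are used properly, and your observation that $|(y,2)|$ and $|(x',x_n+2)|$ are not reflection-invariant is precisely why the identity retains four terms rather than collapsing to a single kernel difference as in Lemma~\ref{lm-UHS-mmp-UV}. This matches the intended proof.
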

\begin{proof}
Direct computations give the result. We omit details here.
\end{proof}

For  $t\ge\frac{2n}{n+\alpha}, p>\frac{2(n-1)}{n+\alpha-4}$ or  $t>\frac{2n}{n+\alpha}, p\ge\frac{2(n-1)}{n+\alpha-4}$, inequality \eqref{IYWT-UHST} yields
\begin{eqnarray}\nonumber
&&\big(\int_{\partial \R^n_+}\big(\int_{ \R^n_+}\frac{x_n g(x)}{(|x^\prime-y|^2+x_n^2)^{\frac{n-\alpha+2}{2}}}
\frac{1}{|(x^\prime,x_n+2)|^{s_2}}\frac{1}{|(y,2)|^{s_1}}dx\big)^{p^\prime}dy\big)^{\frac{1}{p^\prime}}\\
\label{IYWT-UHST-1}
&\le& C(n,\alpha,t)\|g\|_{L^t(\R^n_+)},
\end{eqnarray}
and
\begin{eqnarray}\nonumber
&&\big(\int_{\R^n_+}\big(\int_{\partial \R^n_+}\frac{x_n f(y)}{(|x^\prime-y|^2+x_n^2)^{\frac{n-\alpha+2}{2}}}
\frac{1}{|(y,2)|^{s_1}}\frac{1}{|(x^\prime,x_n+2)|^{s_2}}dy\big)^{t^\prime}dx\big)^{\frac{1}{t^\prime}}\\
\label{IYWT-UHST-2}
&\le& C(n,\alpha,p)\|f\|_{L^p(\partial \R^n_+)}.
\end{eqnarray}
\begin{lm}\label{lm-UHST-mmp-start}
Let $(u,v)$ be a pair of  positive solutions to system \eqref{equ2-HLST-sub}  with $u(y)\in L^{\theta+1}(\partial \R^n_+),v(x)\in L^{t^\prime}(\R^n_+)$. Assume that $2\le \alpha<n$, $p>\frac{2(n-1)}{n+\alpha-4}, t>\frac{2n}{n+\alpha}, \frac{1}{p}+\frac{1}{t}>1$.  Then, for sufficiently negative $\lambda$, we have
\begin{eqnarray*}
u(y)&\le & u_\lambda(y),~\forall ~y\in\Sigma_{\lambda,n-1},\\
v(x)&\le & v_\lambda (x),~\forall ~x\in\Sigma_{\lambda,n}.
\end{eqnarray*}
\end{lm}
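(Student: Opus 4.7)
The plan is to mimic the proof of Lemma~\ref{lm-UHS-mmp-start}, with the direction of the inequalities reversed to reflect that in this section $\alpha<n$ (so the convolution kernels decay at infinity) and $\kappa,\theta>0$ are positive. The core geometric input is the same: for $y_1<\lambda<0$ one computes
$$|y|^2-|y_\lambda|^2=4\lambda(y_1-\lambda)=4|\lambda|(|y_1|-|\lambda|)>0,$$
so $|y|>|y_\lambda|$ throughout $\Sigma_{\lambda,n-1}$; combined with the decay of $u$ at infinity this should force $u(y)<u(y_\lambda)=u_\lambda(y)$ once $\lambda$ is sufficiently negative.

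The first task is to develop analogs of Lemmas~\ref{lm-UHS-mmp-es}, \ref{lm-UHS-regu}, and~\ref{estD-UHS-uv} for the system~\eqref{equ2-HLST-sub}. Using the $L^{p'}(\partial\R^n_+)$ and $L^{t'}(\R^n_+)$ integrability of $u,v$ and Dominated Convergence applied to the integral representation, one establishes
$$\lim_{|y|\to\infty}|y|^{\beta}u(y)=A:=\int_{\R^n_+}\frac{x_n v^\kappa(x)}{|(x',x_n+2)|^{s_2}}\,dx\in(0,\infty),\quad \beta:=n-\alpha+2+s_1,$$
with the parallel statement for $v$ at rate $\beta':=n-\alpha+2+s_2$, together with two-sided pointwise bounds $C^{-1}(1+|y|)^{-\beta}\le u(y)\le C(1+|y|)^{-\beta}$ and similarly for $v$. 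A further differentiation under the integral, using the elementary identity $\sum_i(y_i-x_i)y_i-\tfrac12|x-y|^2=\tfrac12(|y|^2-|x|^2)$, gives the counterpart of Lemma~\ref{estD-UHS-uv} and in particular yields $\nabla(|y|^{\beta/2}u(y))\cdot y<0$ once $|y_1|$ is large enough, so that $|y|^{\beta/2}u(y)$ strictly decreases along outward rays in the far region. Armed with this, the comparison on $\Sigma_{\lambda,n-1}$ splits into two zones determined by the size of $|y_\lambda|$. When $|2\lambda-y_1|\le|\lambda_0|$, the reflected point $y_\lambda$ lies in the fixed compact set $\overline{B(0,|\lambda_0|)}\cap\partial\R^n_+$ where continuity and strict positivity of $u$ give $u(y_\lambda)\ge c_0>0$, while $|y|\ge|y_1|\ge 2|\lambda|-|\lambda_0|$ combined with the decay estimate forces $u(y)\le C(2|\lambda|-|\lambda_0|)^{-\beta}<c_0$ for $|\lambda|$ large. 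When both $|y_1|\ge|\lambda_0|$ and $|2\lambda-y_1|\ge|\lambda_0|$, both $y$ and $y_\lambda$ sit in the asymptotic regime, so the ray monotonicity combined with $|y|>|y_\lambda|$ and $|y|^\beta u(y)\to A$ yields $|y|^{\beta/2}u(y)<|y_\lambda|^{\beta/2}u(y_\lambda)$, which with $|y|>|y_\lambda|$ and $\beta/2>0$ forces $u(y)<u_\lambda(y)$. The identical argument, with $u,v$ interchanged and $(\beta,s_1)$ replaced by $(\beta',s_2)$, handles $v\le v_\lambda$ on $\Sigma_{\lambda,n}$.

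The principal obstacle is the first step, in particular the derivative estimate. The kernel here carries not only the Poisson factor $x_n/|x-y|^{n-\alpha+2}$ but also the spatial weights $|(y,2)|^{-s_1}$ and $|(x',x_n+2)|^{-s_2}$, so the dominating functions underlying Dominated Convergence must simultaneously control the integrand near the boundary $\partial\R^n_+$ (because of the $x_n$ factor), near the point $x^o$, and at infinity in $y$. The subcritical conditions $p>\tfrac{2(n-1)}{n+\alpha-4}$ and $t>\tfrac{2n}{n+\alpha}$, together with $\tfrac1p+\tfrac1t>1$, provide the integrability margin needed for $A$ and its $v$-counterpart to be finite and positive and for the derivative estimate to hold uniformly in the far region.
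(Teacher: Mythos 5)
The paper's own proof of this lemma does \emph{not} follow the pointwise asymptotic route of Section~\ref{Section 2}. Instead it defines the ``bad sets'' $\Sigma^u_{\lambda,n-1}=\{y\in\Sigma_{\lambda,n-1}:u(y)>u_\lambda(y)\}$ and $\Sigma^v_{\lambda,n}$, estimates $u-u_\lambda$ pointwise on $\Sigma^u_{\lambda,n-1}$ by an integral over $\Sigma^v_{\lambda,n}$ (and vice versa), and then feeds these through the subcritical HLS‐type inequalities~\eqref{IYWT-UHST-1}, \eqref{IYWT-UHST-2} to obtain a closed chain of the form
\[
\|u-u_\lambda\|_{L^{p'}(\Sigma^u_{\lambda,n-1})}
 \ \le\
 C\,\|v\|^{\kappa-1}_{L^{t'}(\Sigma^v_{\lambda,n})}\,\|u\|^{\theta-1}_{L^{\theta+1}(\Sigma^u_{\lambda,n-1})}\,
 \|u-u_\lambda\|_{L^{p'}(\Sigma^u_{\lambda,n-1})},
\]
together with a variant for the case $0<\kappa\le1$ via an auxiliary exponent $r$. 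The global hypotheses $u\in L^{\theta+1}(\partial\R^n_+)$, $v\in L^{t'}(\R^n_+)$ make the coefficient tend to $0$ as $\lambda\to-\infty$, forcing both bad sets to be empty. No pointwise decay rate, no derivative estimate, no ray monotonicity is used at all. This ``$L^p$‐contraction'' version of moving planes is exactly chosen because the structure of Section~\ref{Section 3} (namely $\alpha<n$, $\kappa,\theta>0$) does not lend itself to the asymptotic argument that works in Section~\ref{Section 2}.

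Your proposal, which transplants the pointwise programme of Lemmas~\ref{lm-UHS-mmp-es}--\ref{estD-UHS-uv}, has a genuine gap precisely where you flag the ``principal obstacle'': establishing
\[
A:=\int_{\R^n_+}\frac{x_n\,v^\kappa(x)}{|(x',x_n+2)|^{s_2}}\,dx\ \in\ (0,\infty)
\]
and the corresponding two-sided bound $u(y)\asymp(1+|y|)^{-\beta}$. In Section~\ref{Section 2} the proof of finiteness of the analogous constants in Lemma~\ref{lm-UHS-mmp-es}(\Rmnum{1}),(\Rmnum{2}) exploits two features that are \emph{reversed} here. First, there $\alpha>n$, so $|x-y|^{\alpha-n}$ grows at infinity and dominates $1+|x|^{\alpha-n}$ away from a neighborhood of $y$; thus finiteness of $u$ at two points controls the $(1+|x|^{\alpha-n})$-weighted integral globally. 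Here $n-\alpha+2>0$, so the kernel decays at infinity and finiteness of $u(y^1),u(y^2)$ gives no control whatsoever on the tail of the $x$-integral. Second, there $\kappa<0$, so $v^\kappa$ is bounded above once $v$ is bounded below; here $\kappa>0$ and $v^\kappa$ can only be controlled if one already knows how fast $v$ decays, which is exactly what one is trying to prove. A self-consistency check (plugging the conjectured rate $v\sim|x|^{-\beta'}$ into $A$) does show the integral \emph{would} converge under the stated subcriticality hypotheses, but turning this into a proof requires a bootstrap argument that starts from $v\in L^{t'}$ alone and has no counterpart in the Section~\ref{Section 2} toolkit. Your sketch leaves this as an assumption, which is circular. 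In addition, even granting the asymptotics, your invocation of ``ray monotonicity combined with $|y|>|y_\lambda|$'' to conclude $\Phi(y)<\Phi(y_\lambda)$, where $\Phi(y)=|y|^{\beta/2}u(y)$, is not a rigorous deduction: $y$ and $y_\lambda$ lie on distinct rays from the origin and $|\,\cdot\,|$ is not monotone along the segment $[y_\lambda,y]$; the argument in Li~\cite{Li} that the paper imports in Section~\ref{Section 2} handles this with extra care on which your write-up is silent. In short, the route you propose is different in kind from the paper's and, as written, does not close; the paper's integral form method bypasses both difficulties by working only with the global integrability hypotheses.
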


\begin{proof}
Let $$\Sigma_{\lambda,n}^v=\{x\in\Sigma_{\lambda,n}\ : \ v(x)>v_\lambda (x)\}, \ \mbox{and} \ \ \Sigma_{\lambda,{n-1}}^u=\{y\in\Sigma_{\lambda,{n-1}}\ : \ u(y)>u_\lambda(y)\}.$$

For $\lambda<0$, if $y\in \Sigma_{\lambda,{n-1}}^u$, we have
\begin{eqnarray}\nonumber
0&\le& u(y)-u_\lambda(y)\\\nonumber
&\le&\int_{\Sigma_{\lambda,n}}\frac{1}{|(y,2)|^{s_1}}\big(\frac{x_n}{(|x^\prime-y|^2+x_n^2)^{\frac{n-\alpha+2}{2}}}\frac{v^\kappa(x)}{|(x^\prime,x_n+2)|^{s_2}}\\\nonumber
&&+\frac{x_n}{(|x^\prime_\lambda-y|^2+x_n^2)^{\frac{n-\alpha+2}{2}}}\frac{v_\lambda^\kappa(x)}{|(x^\prime_\lambda,x_n+2)|^{s_2}}\big)dx\\\nonumber
&&-\int_{\Sigma_{\lambda,n}}\frac{1}{|(y,2)|^{s_1}}\big(\frac{x_n}{(|x^\prime-y|^2+x_n^2)^{\frac{n-\alpha+2}{2}}}\frac{v_\lambda^\kappa(x)}{|(x^\prime_\lambda,x_n+2)|^{s_2}}\\\nonumber
&&+\frac{x_n}{(|x^\prime_\lambda-y|^2+x_n^2)^{\frac{n-\alpha+2}{2}}}\frac{v^\kappa(x)}{|(x^\prime,x_n+2)|^{s_2}}\big)dx \\
&=&\int_{\Sigma_{\lambda,n}}\frac{1}{|(y,2)|^{s_1}}\big(\frac{x_n}{(|x^\prime_\lambda-y|^2+x_n^2)^{\frac{n-\alpha+2}{2}}}-\frac{x_n}{(|x^\prime-y|^2+x_n^2)^{\frac{n-\alpha+2}{2}}}\big)\\\nonumber
&&\times\big(\frac{v_\lambda^\kappa(x)}{|(x^\prime_\lambda,x_n+2)|^{s_2}}-\frac{v^\kappa(x)}{|(x^\prime,x_n+2)|^{s_2}}\big)dx\\\nonumber
&\le&\int_{\Sigma_{\lambda,n}^v}\frac{1}{|(y,2)|^{s_1}}\big(\frac{x_n}{(|x^\prime_\lambda-y|^2+x_n^2)^{\frac{n-\alpha+2}{2}}}-\frac{x_n}{(|x^\prime-y|^2+x_n^2)^{\frac{n-\alpha+2}{2}}}\big)\\\nonumber
&&\times\big(\frac{v_\lambda^\kappa(x)}{|(x^\prime_\lambda,x_n+2)|^{s_2}}-\frac{v^\kappa(x)}{|(x^\prime,x_n+2)|^{s_2}}\big)dx\\\label{equ1-UHST-mmp-start}
&\le&\int_{\Sigma_{\lambda,n}^v}\frac{x_n}{(|x^\prime-y|^2+x_n^2)^{\frac{n-\alpha+2}{2}}}\frac{v^\kappa(x)-v_\lambda^\kappa(x)}{|(x^\prime,x_n+2)|^{s_2}}\frac{1}{|(y,2)|^{s_1}}dx,
\end{eqnarray}
where we use the facts that $|x^\prime_\lambda-y|>|x^\prime-y|$,  $|x^\prime|>|x^\prime_\lambda|, \ \mbox{and} \  |y^\prime|>|y^\prime_\lambda|$, for $x\in \Sigma_{\lambda,n}, y\in \Sigma_{\lambda,{n-1}}$  for negative $\lambda$.


Notice that $ \frac{1}{p}+\frac{1}{t}>1$ implies that $\kappa\theta>1$. Now we consider two cases.

Case (\rmnum1). $\kappa>1, \theta>1$.

By \eqref{equ1-UHST-mmp-start}, for $y\in \Sigma_{\lambda,{n-1}}^u$ we have
\begin{eqnarray*}
0&\le& u(y)-u_\lambda(y)\\
&\le&\kappa\int_{\Sigma_{\lambda,n}^v}\frac{x_n}{(|x^\prime-y|^2+x_n^2)^{\frac{n-\alpha+2}{2}}}\frac{\Phi^{\kappa-1}_\lambda(v)(v(x)-v_\lambda (x))}{|(x^\prime,x_n+2)|^{s_2}}\frac{1}{|(y,2)|^{s_1}}dx\\
&\le&\kappa\int_{\Sigma_{\lambda,n}^v}\frac{x_n}{(|x^\prime-y|^2+x_n^2)^{\frac{n-\alpha+2}{2}}}\frac{v^{\kappa-1}(x)(v(x)-v_\lambda (x))}{|(x^\prime,x_n+2)|^{s_2}}\frac{1}{|(y,2)|^{s_1}}dx,
\end{eqnarray*}
where $v_\lambda (x)\le\Phi_\lambda(v)\le v(x)$.  Similarly, for negative $\lambda$ and $x\in \Sigma_{\lambda,n}^u$, we have
\begin{eqnarray*}
0&\le& v(x)-v_\lambda (x)\\
&\le& \theta \int_{\Sigma_{\lambda,n-1}^u}\frac{x_n}{(|x^\prime-y|^2+x_n^2)^{\frac{n-\alpha+2}{2}}}\frac{u^{\theta-1}(y)(u(y)-u_\lambda(y))}{|(y,2)|^{s_1}}\frac{1}{|(x^\prime,x_n+2)|^{s_2}}dy.
\end{eqnarray*}

By \eqref{IYWT-UHST-1}, we have
\begin{eqnarray*}
\|u-u_\lambda\|_{L^{p^\prime}(\Sigma_{\lambda,{n-1}}^u)} 
&\le& C\|v^{\kappa-1}(v-v_\lambda )\|_{L^t(\Sigma_{\lambda,n}^v)}\\ 
 &\le& C\|v\|^{\kappa-1}_{L^{t^\prime}(\Sigma_{\lambda,n}^v)}
\|v-v_\lambda\|_{L^{t^\prime}(\Sigma_{\lambda,n}^v)}.
\end{eqnarray*}
Similarly, \eqref{IYWT-UHST-2} yields  that
\begin{eqnarray}\label{equ2-UHST-mmp-start}
\|v-v_\lambda \|_{L^{t^\prime}(\Sigma_{\lambda,n}^v)}
&\le &C\|u^{\theta-1}(u-u_\lambda)\|_{L^p(\Sigma_{\lambda,{n-1}}^u)}\nonumber\\
 &\le& C\|u\|^{\theta-1}_{L^{\theta+1}(\Sigma_{\lambda,{n-1}}^u)}\|u-u_\lambda\|_{L^{p^\prime}(\Sigma_{\lambda,{n-1}}^u)}.
\end{eqnarray}

Notice that  $u(y)\in L^{\theta+1}(\partial \R^n_+),v(x)\in L^{t^\prime}(\R^n_+)$. Thus  for sufficiently negative $\lambda$, $\|v\|^{\kappa-1}_{L^{t^\prime}(\Sigma_{\lambda,n}^v)}, \|u\|^{\theta-1}_{L^{\theta+1}(\Sigma_{\lambda,{n-1}}^u)}$ will be close to zero, which yields  that $\|v-v_\lambda \|_{L^{t^\prime}(\Sigma_{\lambda,n}^v)}=\|u-u_\lambda\|_{L^{p^\prime}(\Sigma_{\lambda,{n-1}}^u)}=0$. That is: both $\Sigma_{\lambda,n}^v$ and $\Sigma_{\lambda,{n-1}}^u$ are empty.

Case (\rmnum2). $0<\kappa\le 1, \theta>1$ or $\kappa>1, 0<\theta\le 1$.

We assume without loss of generality that $0<\kappa\le 1, \theta>1$.  Since $\frac1p+\frac1t>1,$ we have $\frac1\kappa<\theta$. Thus there exists a number $r$ satisfied $\frac1\kappa <r<\theta$. By \eqref{equ1-UHST-mmp-start},  for $\lambda<0, \  y\in \Sigma_{\lambda,{n-1}}^u$, we have
\begin{eqnarray*}
0\le u(y)-u_\lambda(y)&\le& \int_{\Sigma_{\lambda,n}^v}\frac{x_n}{(|x^\prime-y|^2+x_n^2)^{\frac{n-\alpha+2}{2}}}
\frac{ v^\kappa(x)-v^\kappa_\lambda(x) }{|(x^\prime,x_n+2)|^{s_2}}\frac{1}{|(y,2)|^{s_1}}dx\\
&\le& \kappa r\int_{\Sigma_{\lambda,n}^v}\frac{x_n}{(|x^\prime-y|^2+x_n^2)^{\frac{n-\alpha+2}{2}}}
\frac{v^{\kappa-\frac1r}(v^\frac1r(x)-v^\frac1r_\lambda(x))}{|(x^\prime,x_n+2)|^{s_2}}\frac{1}{|(y,2)|^{s_1}}dx.
\end{eqnarray*}
By \eqref{IYWT-UHST-1}, we have
\begin{eqnarray}\label{equ2-UHST-mmp-start-1}
 \|u-u_\lambda\|_{L^{p^\prime}(\Sigma_{\lambda,{n-1}}^u)} 
&\le&C\|v^{\kappa-\frac1r}(v^\frac1r-v^\frac1r_\lambda )\|_{L^t(\Sigma_{\lambda,n}^v)}\nonumber\\
&\le&C\|v^{\kappa-\frac1r}\|_{L^{t^\prime/(t^\prime-1-\frac1r)}(\Sigma_{\lambda,n}^v)}
\|v^\frac1r-v^\frac1r_\lambda\|_{L^{t^\prime r}(\Sigma_{\lambda,n}^v)}\nonumber\\
&=&C\|v\|^{\kappa-\frac1r}_{L^{t^\prime}(\Sigma_{\lambda,n}^v)}
\|v^\frac1r-v^\frac1r_\lambda\|_{L^{t^\prime r}(\Sigma_{\lambda,n}^v)}.
\end{eqnarray}
Using the facts that $|x^\prime|>|x^\prime_\lambda|, \ \mbox{and} \  |y^\prime|>|y^\prime_\lambda|$, for $x\in \Sigma_{\lambda,n}, y\in \Sigma_{\lambda,{n-1}}$ with $\lambda<0$, we have\\
\begin{eqnarray*}
v(x)&=&\int_{\Sigma_{\lambda,{n-1}}^u}\frac{x_nu^\theta(y)}{(|x^\prime-y|^2+x_n^2)^{\frac{n-\alpha+2}{2}}}\frac{1}{|(y,2)|^{s_1}}\frac 1{|(x^\prime,x_n+2)|^{s_2}}dy\\
& &+\int_{\Sigma_{\lambda,{n-1}}^u}\frac{x_nu_\lambda^\theta(y)}{(|x^\prime_\lambda-y|^2+x_n^2)^{\frac{n-\alpha+2}{2}}}\frac{1}{|(y_\lambda,2)|^{s_1}}\frac 1{|(x^\prime,x_n+2)|^{s_2}}dy\\
& &+\int_{\Sigma_{\lambda,{n-1}}\setminus\Sigma_{\lambda,{n-1}}^u}
\frac{x_nu^\theta(y)}{(|x^\prime-y|^2+x_n^2)^{\frac{n-\alpha+2}{2}}}\frac{1}{|(y,2)|^{s_1}}\frac 1{|(x^\prime,x_n+2)|^{s_2}}dy\\
& &+\int_{\Sigma_{\lambda,{n-1}}\backslash\Sigma_{\lambda,{n-1}}^u}
\frac{x_nu_\lambda^\theta(y)}{(|x^\prime_\lambda-y|^2+x_n^2)^{\frac{n-\alpha+2}{2}}}\frac{1}{|(y_\lambda,2)|^{s_1}}\frac 1{|(x^\prime,x_n+2)|^{s_2}}dy\\
&\le&\int_{\Sigma_{\lambda,{n-1}}^u}\frac{x_nu^\theta(y)}{(|x^\prime-y|^2+x_n^2)^{\frac{n-\alpha+2}{2}}}\frac{1}{|(y,2)|^{s_1}}\frac 1{|(x^\prime_\lambda,x_n+2)|^{s_2}}dy\\
& &+\int_{\Sigma_{\lambda,{n-1}}^u}\frac{x_nu_\lambda^\theta(y)}{(|x^\prime_\lambda-y|^2+x_n^2)^{\frac{n-\alpha+2}{2}}}\frac{1}{|(y,2)|^{s_1}}\frac 1{|(x^\prime_\lambda,x_n+2)|^{s_2}}dy\\
& &+\int_{\Sigma_{\lambda,{n-1}}^u}\frac{x_nu_\lambda^\theta(y)}{(|x^\prime-y|^2+x_n^2)^{\frac{n-\alpha+2}{2}}}\frac 1{|(x^\prime_\lambda,x_n+2)|^{s_2}}\big(\frac{1}{|(y_\lambda,2)|^{s_1}}-\frac{1}{|(y,2)|^{s_1}}\big)dy\\
& &+\int_{\Sigma_{\lambda,{n-1}}\setminus\Sigma_{\lambda,{n-1}}^u}
\frac{x_nu^\theta(y)}{(|x^\prime-y|^2+x_n^2)^{\frac{n-\alpha+2}{2}}}\frac{1}{|(y,2)|^{s_1}}\frac 1{|(x^\prime,x_n+2)|^{s_2}}dy\\
& &+\int_{\Sigma_{\lambda,{n-1}}\backslash\Sigma_{\lambda,{n-1}}^u}
\frac{x_nu_\lambda^\theta(y)}{(|x^\prime_\lambda-y|^2+x_n^2)^{\frac{n-\alpha+2}{2}}}\frac{1}{|(y_\lambda,2)|^{s_1}}\frac 1{|(x^\prime,x_n+2)|^{s_2}}dy,
\end{eqnarray*}and
 \begin{eqnarray*}
v_\lambda(x)&=&\int_{\Sigma_{\lambda,{n-1}}^u}\frac{x_nu^\theta(y)}{(|x^\prime_\lambda-y|^2+x_n^2)^{\frac{n-\alpha+2}{2}}}\frac{1}{|(y,2)|^{s_1}}\frac 1{|(x^\prime_\lambda,x_n+2)|^{s_2}}dy\\
& &+\int_{\Sigma_{\lambda,{n-1}}^u}\frac{x_nu_\lambda^\theta(y)}{(|x^\prime_\lambda-y_\lambda|^2+x_n^2)^{\frac{n-\alpha+2}{2}}}
\frac{1}{|(y_\lambda,2)|^{s_1}}\frac 1{|(x^\prime_\lambda,x_n+2)|^{s_2}}dy\\
& &+\int_{\Sigma_{\lambda,{n-1}}\setminus\Sigma_{\lambda,{n-1}}^u}
\frac{x_nu^\theta(y)}{(|x^\prime_\lambda-y|^2+x_n^2)^{\frac{n-\alpha+2}{2}}}\frac{1}{|(y,2)|^{s_1}}\frac 1{|(x^\prime_\lambda,x_n+2)|^{s_2}}dy\\
& &+\int_{\Sigma_{\lambda,{n-1}}\backslash\Sigma_{\lambda,{n-1}}^u}
\frac{x_nu_\lambda^\theta(y)}{(|x^\prime_\lambda-y_\lambda|^2+x_n^2)^{\frac{n-\alpha+2}{2}}}\frac{1}{|(y_\lambda,2)|^{s_1}}\frac 1{|(x^\prime_\lambda,x_n+2)|^{s_2}}dy\\
&\ge&\int_{\Sigma_{\lambda,{n-1}}^u}\frac{x_nu^\theta(y)}{(|x^\prime_\lambda-y|^2+x_n^2)^{\frac{n-\alpha+2}{2}}}\frac{1}{|(y,2)|^{s_1}}\frac 1{|(x^\prime_\lambda,x_n+2)|^{s_2}}dy\\
& &+\int_{\Sigma_{\lambda,{n-1}}^u}\frac{x_nu_\lambda^\theta(y)}{(|x^\prime-y|^2+x_n^2)^{\frac{n-\alpha+2}{2}}}
\frac{1}{|(y,2)|^{s_1}}\frac 1{|(x^\prime_\lambda,x_n+2)|^{s_2}}dy\\
& &+\int_{\Sigma_{\lambda,{n-1}}^u}\frac{x_nu_\lambda^\theta(y)}{(|x^\prime-y|^2+x_n^2)^{\frac{n-\alpha+2}{2}}}\frac 1{|(x^\prime_\lambda,x_n+2)|^{s_2}}\big(\frac{1}{|(y_\lambda,2)|^{s_1}}-\frac{1}{|(y,2)|^{s_1}}\big)dy\\
& &+\int_{\Sigma_{\lambda,{n-1}}\setminus\Sigma_{\lambda,{n-1}}^u}
\frac{x_nu^\theta(y)}{(|x^\prime-y|^2+x_n^2)^{\frac{n-\alpha+2}{2}}}\frac{1}{|(y,2)|^{s_1}}\frac 1{|(x^\prime,x_n+2)|^{s_2}}dy\\
\end{eqnarray*}
 \begin{eqnarray*}
& &+\int_{\Sigma_{\lambda,{n-1}}\backslash\Sigma_{\lambda,{n-1}}^u}
\frac{x_nu_\lambda^\theta(y)}{(|x^\prime_\lambda-y|^2+x_n^2)^{\frac{n-\alpha+2}{2}}}\frac{1}{|(y_\lambda,2)|^{s_1}}\frac 1{|(x^\prime,x_n+2)|^{s_2}}dy.
\end{eqnarray*}
Noticing: for 
$0<s\le1,\, a\ge  b\ge0$ and $c\ge0,$
$$(a+c)^s-(b+c)^s\le a^s-b^s,$$
and
the fact that $|x^\prime_\lambda-y|>|x^\prime-y|$,  for $x\in \Sigma_{\lambda,n}, y\in \Sigma_{\lambda,{n-1}}$ with $\lambda<0$,
 we have, for $x\in\Sigma_{\lambda,n}^v$,
\begin{eqnarray*}
0&\le&v^\frac1r-v_\lambda^\frac1r\\
&\le&\big(\int_{\Sigma_{\lambda,{n-1}}^u}\frac{x_nu^\theta(y)}{(|x^\prime-y|^2+x_n^2)^{\frac{n-\alpha+2}{2}}}\frac{1}{|(y,2)|^{s_1}}\frac 1{|(x^\prime_\lambda,x_n+2)|^{s_2}}dy\\
& &+\int_{\Sigma_{\lambda,{n-1}}^u}\frac{x_nu_\lambda^\theta(y)}{(|x^\prime_\lambda-y|^2+x_n^2)^{\frac{n-\alpha+2}{2}}}\frac{1}{|(y,2)|^{s_1}}\frac 1{|(x^\prime_\lambda,x_n+2)|^{s_2}}dy\big)^\frac1r\\
& &-\big(\int_{\Sigma_{\lambda,{n-1}}^u}\frac{x_nu^\theta(y)}{(|x^\prime_\lambda-y|^2+x_n^2)^{\frac{n-\alpha+2}{2}}}\frac{1}{|(y,2)|^{s_1}}\frac 1{|(x^\prime_\lambda,x_n+2)|^{s_2}}dy\\
& &+\int_{\Sigma_{\lambda,{n-1}}^u}\frac{x_nu_\lambda^\theta(y)}{(|x^\prime-y|^2+x_n^2)^{\frac{n-\alpha+2}{2}}}
\frac{1}{|(y,2)|^{s_1}}\frac 1{|(x^\prime_\lambda,x_n+2)|^{s_2}}dy\big)^\frac1r\\
&=&\big[\int_{\Sigma_{\lambda,{n-1}}^u}\big(\frac{(x_nu^\theta(y))^\frac1r}{(|x^\prime-y|^2+x_n^2)^{\frac{n-\alpha+2}{2r}}}\frac{1}{|(y,2)|^\frac{s_1}r}\frac 1{|(x^\prime_\lambda,x_n+2)|^\frac{s_2}r}\big)^r\\
& &+\big(\frac{(x_nu_\lambda^\theta(y))^\frac1r}{(|x^\prime_\lambda-y|^2+x_n^2)^{\frac{n-\alpha+2}{2r}}}\frac{1}{|(y,2)|^\frac{s_1}r}\frac 1{|(x^\prime_\lambda,x_n+2)|^\frac{s_2}r}\big)^rdy\big]^\frac1r\\
& &-\big[\int_{\Sigma_{\lambda,{n-1}}^u}\big(\frac{(x_nu^\theta(y))^\frac1r}{(|x^\prime_\lambda-y|^2+x_n^2)^{\frac{n-\alpha+2}{2r}}}
\frac{1}{|(y,2)|^\frac{s_1}r}\frac 1{|(x^\prime_\lambda,x_n+2)|^\frac{s_2}r}\big)^r\\
& &+\big(\frac{(x_nu_\lambda^\theta(y))^\frac1r}{(|x^\prime-y|^2+x_n^2)^{\frac{n-\alpha+2}{2r}}}\frac{1}{|(y,2)|^\frac{s_1}r}\frac 1{|(x^\prime_\lambda,x_n+2)|^\frac{s_2}r}\big)^rdy\big]^\frac1r\\
&\le&\big[\int_{\Sigma_{\lambda,{n-1}}^u}\big(\frac{x_n\big(u^\frac\theta r(y)- u_\lambda^\frac\theta r(y)\big)^r}{(|x^\prime-y|^2+x_n^2)^{\frac{n-\alpha+2}{2}}}\frac1{|(y,2)|^{s_1}}\frac 1{|(x^\prime_\lambda,x_n+2)|^{s_2}}\\
& &+\frac{x_n\big(u^\frac\theta r(y)- u_\lambda^\frac\theta r(y)\big)^r}{(|x^\prime_\lambda-y|^2+x_n^2)^{\frac{n-\alpha+2}{2}}}\frac1{|(y,2)|^{s_1}}\frac 1{|(x^\prime_\lambda,x_n+2)|^{s_2}}\big)dy\big]^\frac1r\\
&\le&2\big[\int_{\Sigma_{\lambda,{n-1}}^u}\frac{x_n(u^\frac\theta r(y)-u_\lambda^\frac\theta r(y))^r}{(|x^\prime-y|^2+x_n^2)^{\frac{n-\alpha+2}{2}}}\frac{1}{|(y,2)|^{s_1}}\frac 1{|(x^\prime_\lambda,x_n+2)|^{s_2}}dy\big]^\frac1r\\
&\le&\frac{2\theta} r\big[\int_{\Sigma_{\lambda,{n-1}}^u}\frac{x_nu^{\theta-r}(y)(u(y)-u_\lambda(y))^r}{(|x^\prime-y|^2+x_n^2)^{\frac{n-\alpha+2}{2}}}
\frac{1}{|(y,2)|^{s_1}}\frac 1{|(x^\prime_\lambda,x_n+2)|^{s_2}}dy\big]^\frac1r.\\
\end{eqnarray*}
Then, by HLS inequality \eqref{IYWT-UHST-1} it yields
\begin{eqnarray}\label{equ2-UHST-mmp-start-2}
& &\|v^\frac1r-v^\frac1r_\lambda\|_{L^{t^\prime r}(\Sigma_{\lambda,n}^v)}\nonumber\\
&\le &\frac{2\theta} r\big\{\int_{\Sigma_{\lambda,n}^v}\big[\int_{\Sigma_{\lambda,{n-1}}^u}\frac{x_nu^{\theta-r}(y)(u(y)-u_\lambda(y))^r}{(|x^\prime-y|^2+x_n^2)^{\frac{n-\alpha+2}{2}}}
\frac{1}{|(y,2)|^{s_1}}\frac 1{|(x^\prime_\lambda,x_n+2)|^{s_2}}dy\big]^{t^\prime}dx\big\}^\frac1{t^\prime r}\nonumber\\
&\le&C\|u^{\theta-r}(u-u_\lambda)^r\|^\frac1r_{L^{p}(\Sigma_{\lambda,{n-1}}^u)}\nonumber\\
&\le&C\|u\|^\frac{\theta-r}r_{L^{\theta+1}(\Sigma_{\lambda,{n-1}}^u)}\|u-u_\lambda\|_{L^{p^\prime}(\Sigma_{\lambda,{n-1}}^u)}.
\end{eqnarray}
Combining the above with \eqref{equ2-UHST-mmp-start-1}, we have
\begin{eqnarray*}
\|u-u_\lambda\|_{L^{p^\prime}(\Sigma_{\lambda,{n-1}}^u)}&\le& C\|v\|^{\kappa-\frac1r}_{L^{t^\prime}(\Sigma_{\lambda,n}^v)}
\|v^\frac1r-v^\frac1r_\lambda\|_{L^{t^\prime r}(\Sigma_{\lambda,n}^v)}\\
&\le&C\|v\|^{\kappa-\frac1r}_{L^{t^\prime}(\Sigma_{\lambda,n}^v)}\|u\|^\frac{\theta-r}r_{L^{\theta+1}(\Sigma_{\lambda,{n-1}}^u)}\|u-u_\lambda\|_{L^{p^\prime}(\Sigma_{\lambda,{n-1}}^u)}.
\end{eqnarray*}
Then, we can obtain \eqref{equ2-UHST-mmp-start} again. Similar to that in Case (\rmnum1) we can show that both $\Sigma_{\lambda,{n-1}}^u$ and $\Sigma_{\lambda,n}^v$are empty.
\end{proof}

Define $$\overline{\lambda}=\sup\{\mu<0\ : \forall \lambda\le \mu, \  \ u(y)\le u_\lambda(y),~\forall ~y\in\Sigma_{\lambda,n-1}; \ v(x)\le v_\lambda (x),~\forall ~x\in\Sigma_{\lambda,n}\}.$$

\begin{lm}\label{lm-UHST-mmp-infy}
$\overline{\lambda}=0$.
\end{lm}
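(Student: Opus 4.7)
The plan is to argue by contradiction: assume $\overline{\lambda}<0$ and show the moving plane can be pushed slightly further to the right, violating maximality of $\overline{\lambda}$.

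First I would pass to the limit $\lambda \nearrow \overline{\lambda}$. Continuity of $u$ and $v$ (Lemma \ref{lm-UHST-regu}) yields $u \le u_{\overline{\lambda}}$ on $\Sigma_{\overline{\lambda},n-1}$ and $v \le v_{\overline{\lambda}}$ on $\Sigma_{\overline{\lambda},n}$. I then would plug these non-strict inequalities into the representation formulas \eqref{UHST-mmp-U}--\eqref{UHST-mmp-V} of Lemma \ref{lm-UHST-mmp-UV} and use that, for $\overline{\lambda}<0$ and points in the reflected half-space, the weight differences $|(x',x_n+2)|^{-s_2}-|(x'_\lambda,x_n+2)|^{-s_2}$ and $|(y,2)|^{-s_1}-|(y_\lambda,2)|^{-s_1}$ are strictly negative, while the kernel difference $(|x'_\lambda-y|^2+x_n^2)^{-(n-\alpha+2)/2}-(|x'-y|^2+x_n^2)^{-(n-\alpha+2)/2}$ is strictly negative, to upgrade to the strict inequalities $u<u_{\overline{\lambda}}$ on $\Sigma_{\overline{\lambda},n-1}$ and $v<v_{\overline{\lambda}}$ on $\Sigma_{\overline{\lambda},n}$. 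Indeed, if equality held at any interior point, the entire integrand would have to vanish almost everywhere, contradicting $u,v>0$.

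Next, for $\lambda$ slightly greater than $\overline{\lambda}$ (still negative), I would define $\Sigma^v_{\lambda,n}$ and $\Sigma^u_{\lambda,n-1}$ as in Lemma \ref{lm-UHST-mmp-start} and repeat that lemma's argument. In Case (i) ($\kappa,\theta>1$), combining \eqref{equ2-UHST-mmp-start} with its counterpart obtained by swapping the roles of $u$ and $v$ yields a self-improvement inequality of the form
$$
\|u-u_\lambda\|_{L^{p'}(\Sigma^u_{\lambda,n-1})} \le C\,\|v\|^{\kappa-1}_{L^{t'}(\Sigma^v_{\lambda,n})}\,\|u\|^{\theta-1}_{L^{\theta+1}(\Sigma^u_{\lambda,n-1})}\,\|u-u_\lambda\|_{L^{p'}(\Sigma^u_{\lambda,n-1})},
$$
with the parallel estimate for $\|v-v_\lambda\|_{L^{t'}(\Sigma^v_{\lambda,n})}$. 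In Case (ii) the same structure results from \eqref{equ2-UHST-mmp-start-1}--\eqref{equ2-UHST-mmp-start-2}, with exponents modified by the intermediate parameter $r$ satisfying $1/\kappa<r<\theta$.

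The crux is then showing that both $\|v\|_{L^{t'}(\Sigma^v_{\lambda,n})}$ and $\|u\|_{L^{\theta+1}(\Sigma^u_{\lambda,n-1})}$ can be made arbitrarily small by shrinking $\lambda-\overline{\lambda}$. Since $v\in L^{t'}(\R^n_+)$ and $u\in L^{\theta+1}(\partial\R^n_+)$, the absolute continuity of the integral reduces this to showing $|\Sigma^v_{\lambda,n}\cap B_R|\to 0$ and $|\Sigma^u_{\lambda,n-1}\cap B_R|\to 0$ for any fixed large $R$; the tails outside $B_R$ can be made small by choosing $R$ large, independently of $\lambda$. The interior claim follows from the strict inequalities at $\overline{\lambda}$ together with uniform continuity of $u$ and $v$ on compact sets: on any compact subset of $\overline{\Sigma_{\overline{\lambda},n}}\cap B_R$ where $v_{\overline{\lambda}}-v\ge\delta>0$, one has $v_\lambda-v\ge\delta/2$ for $\lambda$ close enough to $\overline{\lambda}$, so such points do not lie in $\Sigma^v_{\lambda,n}$; and the remaining strip $(\Sigma_{\lambda,n}\setminus\Sigma_{\overline{\lambda},n})\cap B_R$ has Lebesgue measure $O(\lambda-\overline{\lambda})$. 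Once both norms are made small enough, the self-improvement inequality forces $\|u-u_\lambda\|_{L^{p'}(\Sigma^u_{\lambda,n-1})}=\|v-v_\lambda\|_{L^{t'}(\Sigma^v_{\lambda,n})}=0$, so both sets are empty up to null sets, which contradicts the definition of $\overline{\lambda}$.

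The main obstacle I anticipate is the interplay between the strict inequality at $\overline{\lambda}$ and the integrability at infinity: one must extract a uniform positive gap $v_{\overline{\lambda}}-v\ge\delta(R)$ on every compact piece of $\Sigma_{\overline{\lambda},n}\cap B_R$ staying away from the moving plane $T_{\overline{\lambda}}$, while simultaneously dominating the tails using only $v\in L^{t'}$. The narrow strip adjacent to $T_{\overline{\lambda}}$ needs particular care, since the comparison $v\lessgtr v_\lambda$ there is essentially trivial; fortunately $v$ is locally bounded (by Lemma \ref{lm-UHST-regu}) and the strip has small Lebesgue measure, so its contribution is negligible. The two regimes of Lemma \ref{lm-UHST-mmp-start}, $\kappa,\theta>1$ versus $\kappa\le1$ or $\theta\le1$, must be handled separately to preserve the correct exponent $r$ in the self-improvement inequality.
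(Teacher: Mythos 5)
Your proposal is correct and follows essentially the same route as the paper's proof: assume $\overline{\lambda}<0$, upgrade to strict inequalities at $\overline{\lambda}$ via Lemma \ref{lm-UHST-mmp-UV}, decompose the bad sets $\Sigma^u_{\lambda,n-1}$, $\Sigma^v_{\lambda,n}$ into a far tail, a thin strip near $T_{\overline{\lambda}}$, and a compact middle region where the strict gap persists under small perturbation, make the $L^{\theta+1}$ and $L^{t'}$ norms there small, and invoke the self-improvement estimate from Lemma \ref{lm-UHST-mmp-start}. The paper encodes your three regions and associated care about the shrinking strip near the moving plane via the parameters $R$, $\delta$, $\epsilon_2$ chosen in that order, which is the same bookkeeping you describe.
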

\begin{proof}
We prove this  by contradiction. Suppose that $\overline{\lambda}<0$.

Notice that
\begin{eqnarray*}
u(y)&\le & u_{\overline{\lambda}}(y),~\forall ~y\in\Sigma_{\overline{\lambda},n-1},\\
v(x)&\le & v_{\overline{\lambda}}(x),~\forall ~x\in\Sigma_{\overline{\lambda},n}.
\end{eqnarray*}
Then by Lemma \ref{lm-UHST-mmp-UV} we have
\begin{eqnarray}\label{equ1-UHST-mmp-infy}
u(y)&< & u_{\overline{\lambda}}(y),~\forall ~y\in\Sigma_{\overline{\lambda},n-1},\\\label{equ2-UHST-mmp-infy}
v(x)&< & v_{\overline{\lambda}}(x),~\forall ~x\in\Sigma_{\overline{\lambda},n}.
\end{eqnarray}
Otherwise, if there exists $y^0\in\Sigma_{\overline{\lambda},n-1}$ such that $u(y^0)=u(y^0_{\overline{\lambda}})$, then
\begin{eqnarray*}
0&\le&\int_{\Sigma_{\overline{\lambda},n}}\frac{1}{|(y^0,2)|^{s_1}}\big(\frac{x_n}{(|x^\prime-y^0|^2+x_n^2)^{\frac{n-\alpha+2}{2}}}
-\frac{x_n}{(|x^\prime_{\overline{\lambda}}-y^0|^2+x_n^2)^{\frac{n-\alpha+2}{2}}}\big)\\
&&\times\big(\frac{v^\kappa(x)}{|(x^\prime,x_n+2)|^{s_2}}-\frac{v_{\overline{\lambda}}^\kappa(x)}{|(x^\prime_{\overline{\lambda}},x_n+2)|^{s_2}}\big),
 \end{eqnarray*}
which yields
$$v(x)\equiv v_{\overline{\lambda}}(x)\equiv 0,~\forall ~ x\in \R^n_+$$
since $|x_{\overline{\lambda}}-y^0|>|x-y^0|, |x_{\overline{\lambda}}|<|x|$.
It is impossible.

Now we prove that, there exists $\epsilon^*>0$ such that, for any $\lambda\in(\overline{\lambda},\overline{\lambda}+\epsilon^*)$,
\begin{eqnarray*}
u(y)&\le & u_\lambda(y),~\forall ~y\in\Sigma_{\lambda,n-1},\\
v(x)&\le & v_\lambda (x),~\forall ~x\in\Sigma_{\lambda,n},
\end{eqnarray*}
which draws a contradiction, we thus have $\overline{\lambda}=0$.


By Lemma \ref{lm-UHST-regu}, $u,v$ are continuous. Then by \eqref{equ1-UHST-mmp-infy}, \eqref{equ2-UHST-mmp-infy}, for $R>0$ large we can take $\delta>0$ small such that
\begin{eqnarray*}
&&u_{\overline{\lambda}}(y)-u(y)\ge C_0,~\forall ~y\in\Sigma_{\overline{\lambda}-\delta,n-1}\cap B^{n-1}_R(0),\\
&&v_{\overline{\lambda}}(x)-v(x)\ge C_0,~\forall ~x\in\Sigma_{\overline{\lambda}-\delta,n}\cap B^n_R(0)
\end{eqnarray*}
for some $C_0>0$ small.
Then there exists $\epsilon_1>0$ small such that for  $\lambda\in(\overline{\lambda},\overline{\lambda}+\epsilon_1)$,
\begin{eqnarray*}\label{equ3-UHST-mmp-infy}
&&u_{\lambda}(y)-u(y)\ge 0,~\forall ~y\in\Sigma_{\overline{\lambda}-\delta,n-1}\cap B^{n-1}_R(0),\\\label{equ4-UHST-mmp-infy}
&&v_{\lambda}(x)-v(x)\ge 0,~\forall ~x\in\Sigma_{\overline{\lambda}-\delta,n}\cap B^n_R(0).
\end{eqnarray*}

So  $\Sigma_{\lambda,n-1}^u\subset (\partial \R^n_+ \setminus B^{n-1}_R(0))\cup((\Sigma_{\lambda,n-1}\setminus \Sigma_{\overline{\lambda}-\delta,n-1})\cap B^{n-1}_R(0))$ and $\Sigma_{\lambda,n}^v\subset (\R^n_+\setminus B^n_R(0)) \cup ((\Sigma_{\lambda,n}\setminus \Sigma_{\overline{\lambda}-\delta,n})\cap B^n_R(0))$.
Therefore we can first choose $R>0$ large, then $\delta>0$ small, then $0<\epsilon_2<\epsilon_1$ small such that for $\lambda\in(\overline{\lambda},\overline{\lambda}+\epsilon_2)$,
$$\|v\|_{L^{t^\prime}((\R^n_+\setminus B^n_R(0)) \cup ((\Sigma_{\lambda,n}\setminus \Sigma_{\overline{\lambda}-\delta,n})\cap B^n_R(0)))}$$ and $$\|u\|_{L^{\theta+1}((\partial \R^n_+ \setminus B^{n-1}_R(0))\cup((\Sigma_{\lambda,n-1}\setminus \Sigma_{\overline{\lambda}-\delta,n-1})\cap B^{n-1}_R(0)))}$$ is sufficiently small. Then the same argument as in Lemma \ref{lm-UHST-mmp-start} yields  that both $\Sigma_{\lambda,n}^v$ and $\Sigma_{\lambda,{n-1}}^u$ are empty.
%
\end{proof}

{\bf Proof of Propsition \ref{thm-sym-UHST-sub}.}
Lemma \ref{lm-UHST-mmp-infy} indicates $\overline{\lambda}=0$. Thus $u(y)$ and $v(x)$ are symmetric with respect to $x_1=0$ since one also can move the plane from positive infinity to zero.
Similar argument shows that  $u(y)$ and $v(x)$ are symmetric with respect to $x_2=0, x_3=0, \cdots,x_{n-1}=0$. \hfill$\Box$

\subsection{Proof of Theorem \ref{HLST-ball}} By Proposition \ref{thm-sol-UHST-sub}, the best constant for inequality \eqref{IYWT-ball-equv} is
$$C(n,\alpha,p,t)=(n\omega_n)^{-\frac{1}{p}}\big(\int_{B^n}\big(\int_{\partial B^n}\frac{(1-|\xi-z^o|^2)}{|\xi-\zeta|^{n-\alpha+2}}d\zeta\big)^{t^\prime}d\xi\big)^{\frac{1}{t^\prime}}.$$
Sending $p\to (\frac{2(n-1)}{n+\alpha-4})^+, t\to (\frac{2n}{n+\alpha})^+$, we have $C(n,\alpha,p,t)\to C_{e_2}(n,\alpha)$, where
$$C_{e_2}(n,\alpha) =(n\omega_n)^{-\frac{n+\alpha-4}{2(n-1)}}\big(\int_{B^n}\big(\int_{\partial B^n}\frac{(1-|\xi-z^o|^2)}{|\xi-\zeta|^{n-\alpha+2}}d\zeta\big)^{\frac{2n}{n-\alpha}}d\xi\big)^{\frac{n-\alpha}{2n}}.$$

%
%

We are left to classify all extremal functions.
In the rest of this subsection we always assume $p=\frac{2(n-1)}{n+\alpha-4}, t=\frac{2n}{n+\alpha}$.

 For $2\le\alpha<n$, define
$$\tilde{P}_\alpha f(x)=\int_{\partial \R^n_+}\frac{x_n f(y)}{(|x^\prime-y|^2+x_n^2)^{\frac{n-\alpha+2}{2}}}dy,~\text{for all}~x\in \R^n_+,$$
where $f\in L^p(\partial \R^n_+)$. Then inequality \eqref{ineq-HLST-UHS} is equivalent to
\begin{equation}\label{ineq-HLST-UHS-equv-C}
\|\tilde{P}_\alpha f\|_{L^{t^\prime}(\R^n_+)}\le \frac{C_{e_2}(n,\alpha)}{2}\|f\|_{L^p(\partial \R^n_+)}.
\end{equation}
The Euler-Lagrange equation to the above inequality is
\begin{equation}\label{equ-EL-critical-HLST}
f^{p-1}(y)=\int_{\R^n_+}\frac{x_n(\tilde{P}_\alpha f(x))^{t^\prime-1}}{(|x^\prime-y|^2+x_n^2)^{\frac{n-\alpha+2}{2}}}dx, \ \ ~\forall~y\in\partial \R^n_+.
\end{equation}


\begin{prop}\label{prop-extr-HLST}
Let $f\in L^p(\partial \R^n_+)$ be a positive solution to equation \eqref{equ-EL-critical-HLST}. Then $f$ must be given in the following form:
\begin{equation*}
f(y)=c(n,\alpha)(\frac{1}{|y-\overline{y}^0|^2+d^2})^{\frac{n+\alpha-4}{2}},
\end{equation*}
where $y, \overline{y}^0\in\partial \R^n_+, c(n,\alpha), d>0$.
\end{prop}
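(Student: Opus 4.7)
The plan is to parallel the moving-sphere argument indicated at the end of Section \ref{Section 2} for the HLS case, but with the Poisson-type kernel. First, from the scalar Euler--Lagrange equation \eqref{equ-EL-critical-HLST} I would pass to a coupled integral system. Setting $u(y):=f(y)$ and $v(x):=\tilde P_\alpha f(x)$, and using that $|x'-y|^2+x_n^2=|x-y|^2$ for $y\in\partial\R^n_+$, I obtain
\begin{equation*}
\begin{cases}
u^{p-1}(y)=\displaystyle\int_{\R^n_+}\dfrac{x_n\,v^{t'-1}(x)}{|x-y|^{n-\alpha+2}}\,dx, & y\in\partial\R^n_+,\\[4pt]
v(x)=\displaystyle\int_{\partial\R^n_+}\dfrac{x_n\,u(\zeta)}{|x-\zeta|^{n-\alpha+2}}\,d\zeta, & x\in\R^n_+.
\end{cases}
\end{equation*}
Integrability $u\in L^p(\partial\R^n_+)$, $v\in L^{t'}(\R^n_+)$ is automatic from the extremal property and inequality \eqref{ineq-HLST-UHS}, and continuity of $u,v$ on their respective closed half-spaces follows by dominated convergence exactly as in Lemma \ref{lm-UHST-regu}.

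Next I would implement the method of moving spheres. For every $z\in\partial\R^n_+$ and $\lambda>0$, set
\[
u^{z,\lambda}(y):=\Bigl(\tfrac{\lambda}{|y-z|}\Bigr)^{n+\alpha-4}u(y^{z,\lambda}),\qquad v^{z,\lambda}(x):=\Bigl(\tfrac{\lambda}{|x-z|}\Bigr)^{n-\alpha}v(x^{z,\lambda}),
\]
where $y^{z,\lambda}=z+\lambda^2(y-z)/|y-z|^2$, and similarly for $x$. The exponents $n+\alpha-4$ and $n-\alpha$ are precisely $2(n-1)/p$ and $2n/t'$, i.e.\ the conformal weights for $L^p(\partial\R^n_+)$ and $L^{t'}(\R^n_+)$ at our critical exponents. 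Using the identities $(x^{z,\lambda})_n=(\lambda/|x-z|)^2 x_n$ and $|a^{z,\lambda}-b^{z,\lambda}|=\lambda^2|a-b|/(|a-z||b-z|)$, together with the Kelvin Jacobians $(\lambda/|\cdot-z|)^{2n}$ on $\R^n_+$ and $(\lambda/|\cdot-z|)^{2(n-1)}$ on $\partial\R^n_+$, a direct bookkeeping (in which the powers of $|y-z|$, $|\tilde x-z|$ and $\lambda$ all cancel thanks to $a(p-1)=n-\alpha+2$ and $b(t'-1)=n+\alpha$) shows that $(u^{z,\lambda},v^{z,\lambda})$ solves the same system.

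With conformal invariance at hand, the standard moving-sphere argument applies: for $\lambda>0$ small one shows $u^{z,\lambda}(y)\le u(y)$ on $\partial\R^n_+\setminus B_\lambda(z)$ and $v^{z,\lambda}(x)\le v(x)$ on $\R^n_+\setminus B_\lambda(z)$, via the spherical analogue of Lemma \ref{lm-UHST-mmp-UV} combined with the decay of $u$ and $v$ at infinity. Letting $\bar\lambda(z)$ be the supremum of such $\lambda$, one then runs the familiar dichotomy: either $\bar\lambda(z)<\infty$ for every $z$, in which case a strong-maximum-principle-type argument applied to the integral identity forces $u^{z,\bar\lambda(z)}\equiv u$; or $\bar\lambda(z_0)=\infty$ at some $z_0$, in which case the decay of $u$ at infinity collapses the norm $\|f\|_{L^p(\partial\R^n_+)}$ and produces a contradiction. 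Only the first alternative survives.

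Finally, once $u^{z,\bar\lambda(z)}\equiv u$ for every $z\in\partial\R^n_+$, the Li--Zhang-type calculus lemma (Lemma 5.8 of \cite{Li}, applied as in \cite{DZ-1,DZ-2}) identifies
\[
u(y)=c\bigl(d^{2}+|y-\bar y^{0}|^{2}\bigr)^{-(n+\alpha-4)/2}
\]
for some $c>0$, $d>0$ and $\bar y^{0}\in\partial\R^n_+$, which is precisely the asserted form of $f$. The step I expect to be the main obstacle is the conformal-invariance bookkeeping of the coupled system with two distinct Kelvin weights, together with sharp enough decay estimates on $u$ and $v$ to kickstart the moving-sphere procedure from arbitrarily small $\lambda$ and to exclude the degenerate case $\bar\lambda(z)=\infty$; once these are established, the remaining steps are a faithful transcription of the arguments in \cite{Li,DZ-1,DZ-2}.
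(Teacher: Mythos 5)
Your plan follows the same architecture as the paper's own proof of Proposition~\ref{prop-extr-HLST}: pass from the single Euler--Lagrange equation \eqref{equ-EL-critical-HLST} to a coupled integral system, verify regularity, run the method of moving spheres on both unknowns with the two Kelvin weights, and finish with Li's calculus lemma (Lemma~5.8 of \cite{Li}). Your bookkeeping of the conformal exponents is correct: with $u=f$ and $v=\tilde P_\alpha f$ the weights $n+\alpha-4$ and $n-\alpha$ are exactly $\tfrac{2(n-1)}p$ and $\tfrac{2n}{t'}$, and the cancellations $a(p-1)=n-\alpha+2$ and $b(t'-1)=n+\alpha$ are the right identities. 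This matches the paper up to the harmless reparametrization (the paper uses $u=f^{p-1}$, $v=(\tilde P_\alpha f)^{t'-1}$ with $\theta=1/(p-1)$, $\kappa=t'-1$, which shifts the weight on $u$ to $n-\alpha+2$, consistent with yours via $(p-1)(n+\alpha-4)=n-\alpha+2$).

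Two minor deviations are worth flagging. First, to kickstart the moving spheres you invoke pointwise decay of $u$ and $v$ at infinity; the paper instead starts the sphere at small $\lambda$ by an integral argument (Lemma~\ref{lm-UHST-mms-start-cri}), controlling $\|u_{y^0,\lambda}-u\|_{L^{p'}(\Sigma^{n-1,u}_{y^0,\lambda})}$ and $\|v_{y^0,\lambda}-v\|_{L^{t'}(\Sigma^{n,v}_{y^0,\lambda})}$ via the sharp inequality \eqref{ineq-HLST-UHS} and the smallness of the $L^{\theta+1}$ and $L^{t'}$ tails over the small ball $B_\lambda(y^0)$; this avoids needing a priori pointwise decay and works directly under the integrability hypotheses. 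Second, your handling of the degenerate case is stated a bit loosely: the contradiction does not come from $\bar\lambda(z_0)=\infty$ at a single point collapsing the norm. The paper's logic is: if $\bar\lambda(y)=\infty$ for \emph{every} $y$, then Li's Lemma~5.7 forces $u$ to be a positive constant, contradicting $u\in L^{\theta+1}(\partial\R^n_+)$; thus some $\bar\lambda(y^0)<\infty$; and then an asymptotic comparison of $\liminf_{|\eta|\to\infty}|\eta|^{n-\alpha+2}u(\eta)$ (equations \eqref{equ1-prop-extr-sys-HLST}--\eqref{equ2-prop-extr-sys-HLST}) shows that finiteness at one center propagates to all centers. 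You should insert this two-step argument in place of the vague "collapses the norm" phrase. With those adjustments your sketch becomes a faithful transcription of the paper's proof.
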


To prove Proposition \ref{prop-extr-HLST}, for simplicity we again denote $u(y)=f^{p-1}(y), v(x)=(\tilde{P}_\alpha f(x))^{t^\prime-1}$, then
 up to a constant, we have
\begin{equation}\label{equ-UHST-cri}
\begin{cases}
u(y)=\int_{\R^n_+}\frac{x_n v^\kappa(x)}{|x-y|^{n-\alpha+2}}
dx, ~y\in\partial \R^n_+,\\
v(x)=\int_{\partial \R^n_+}\frac{x_n u^\theta(y)}{|x-y|^{n-\alpha+2}}dy,~x\in \R^n_+,
\end{cases}
\end{equation}
where $\kappa:=t^\prime-1>0,~ \theta:=\frac{1}{p-1}>0, u(y)\in L^{\theta+1}(\partial \R^n_+),v(x)\in L^{t^\prime}(\R^n_+)$. As in the subcritical case (see Lemma \ref{lm-UHST-regu}),  $u(y), y\in \partial \R^n_+$ and $v(x), x\in \overline{\R^n_+}$ are continuous.

We shall use the method of moving spheres to derive
\begin{prop}\label{prop-extr-sys-HLST}
Let $(u,v)$ be a pair of positive solutions to system \eqref{equ-UHST-cri} with $u(y)\in L^{\theta+1}(\partial \R^n_+),v(x)\in L^{t^\prime}(\R^n_+)$. Then $u,v$ must be the following forms:
\begin{eqnarray*}
u(y)&=&c_1\big(\frac{1}{|y-\overline{y}^0|^2+d^2}\big)^{\frac{n-\alpha+2}{2}},\\
v(y,0)&=&c_2\big(\frac{1}{|y-\overline{y}^0|^2+d^2}\big)^{\frac{n-\alpha}{2}},
\end{eqnarray*}
where $y,\overline{y}^0\in\partial \R^n_+,c_1,c_2,d>0$.
\end{prop}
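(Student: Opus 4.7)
The plan is to apply the method of moving spheres centered at boundary points $y^0\in\partial\R^n_+$. The critical exponents $\theta=(n+\alpha-4)/(n-\alpha+2)$ and $\kappa=(n+\alpha)/(n-\alpha)$ are chosen precisely so that system \eqref{equ-UHST-cri} is conformally invariant under Kelvin transforms that fix $\partial\R^n_+$ setwise; this invariance forces $u$ and the (appropriately normalized) boundary trace of $v$ to be of the specific form claimed.

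First, for each $y^0\in\partial\R^n_+$ and $\lambda>0$, I would define the reflection $z^{y^0,\lambda}=y^0+\lambda^2(z-y^0)/|z-y^0|^2$ and the Kelvin-type transforms
$$u_{y^0,\lambda}(y)=\Bigl(\tfrac{\lambda}{|y-y^0|}\Bigr)^{n+\alpha-4}u(y^{y^0,\lambda}),\qquad v_{y^0,\lambda}(x)=\Bigl(\tfrac{\lambda}{|x-y^0|}\Bigr)^{n-\alpha}v(x^{y^0,\lambda}).$$
Because $y^0\in\partial\R^n_+$, this reflection preserves $\R^n_+$ and $\partial\R^n_+$, and in particular it preserves the normal coordinate in a controlled way ($x_n^{y^0,\lambda}=\lambda^2 x_n/|x-y^0|^2$). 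A change of variables in the integral kernels of \eqref{equ-UHST-cri}, together with the identity $|z^{y^0,\lambda}-w^{y^0,\lambda}|=\lambda^2|z-w|/(|z-y^0|\,|w-y^0|)$ and the exponent balance $(p-1)(n+\alpha-4)=n-\alpha+2$ and $(t^\prime-1)(n-\alpha)=n+\alpha$, then shows that $(u_{y^0,\lambda},v_{y^0,\lambda})$ solves the same system as $(u,v)$. From this one derives, as in Lemma \ref{lm-UHST-mmp-UV}, an integral comparison identity relating $u-u_{y^0,\lambda}$ on $\partial\R^n_+\setminus B_\lambda(y^0)$ to $v-v_{y^0,\lambda}$ on $\R^n_+\setminus B_\lambda(y^0)$, with the correct signs provided by the reflection.

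Next, I would start the moving spheres: for $\lambda$ sufficiently small (depending on $y^0$), one has $u\ge u_{y^0,\lambda}$ outside $B_\lambda(y^0)$ on $\partial\R^n_+$ and $v\ge v_{y^0,\lambda}$ outside $B_\lambda(y^0)$ on $\R^n_+$. The proof mirrors the structure of Lemma \ref{lm-UHST-mmp-start}: on the positivity sets $\Sigma^u_{y^0,\lambda}:=\{u<u_{y^0,\lambda}\}$ and $\Sigma^v_{y^0,\lambda}:=\{v<v_{y^0,\lambda}\}$, iterate the comparison identity against the Hardy--Littlewood--Sobolev-type bounds \eqref{IYWT-UHST-1}--\eqref{IYWT-UHST-2}, and use that the weighted $L^{\theta+1}$ and $L^{t^\prime}$ masses of $u$ and $v$ over thin shells around $y^0$ are arbitrarily small. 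Then define $\bar\lambda(y^0):=\sup\{\mu>0:\text{comparison holds for all }0<\lambda<\mu\}$ and consider the dichotomy: either $\bar\lambda(y^0)=\infty$ for some $y^0$, or $\bar\lambda(y^0)<\infty$ for every $y^0\in\partial\R^n_+$. In the first case, the scaling of $u_{y^0,\lambda}$ at infinity is incompatible with $u\in L^{\theta+1}(\partial\R^n_+)$ unless $u\equiv0$, a contradiction; so only the second case occurs, and by continuity plus the strong form of the comparison identity one obtains the rigidity $u\equiv u_{y^0,\bar\lambda(y^0)}$ and $v\equiv v_{y^0,\bar\lambda(y^0)}$ for every $y^0$.

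Finally, I would invoke the calculus lemma of Li--Zhu (Lemma 11.1 in \cite{Li}; see also \cite{DZ-1}, \cite{DZ-2}): a positive function on $\R^{n-1}\simeq\partial\R^n_+$ which coincides with each of its Kelvin transforms of exponent $n+\alpha-4$ at some radius $\bar\lambda(y^0)$ must be of the form $c_1(|y-\bar y^0|^2+d^2)^{-(n+\alpha-4)/2}$, hence $u(y)=c_1(|y-\bar y^0|^2+d^2)^{-(n-\alpha+2)/2}$ after raising to the exponent $p-1$ as above; applying the same lemma to the boundary data extracted from $v$ (at exponent $n-\alpha$) gives the stated form for $v(y,0)$. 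The main obstacle is the second step, namely starting and continuing the moving-sphere process at the critical exponent for a system with one unknown on the boundary and one in the interior: one must simultaneously control the coupled weighted $L^{\theta+1}$/$L^{t^\prime}$ smallness on shrinking shells around $y^0$ and verify that the Kelvin reflection does not pick up spurious boundary terms—this is where the endpoint regularity of $u$ and $v$ from Lemma \ref{lm-UHST-regu} and the sharp Hardy--Littlewood--Sobolev estimates \eqref{IYWT-UHST-1}--\eqref{IYWT-UHST-2} are essential.
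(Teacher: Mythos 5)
Your plan follows the same moving-spheres approach the paper uses, with the same structure (establish conformal invariance, start the spheres at small radius using the HLS-type estimates on shrinking shells, define $\bar\lambda(y^0)$, get rigidity when $\bar\lambda(y^0)<\infty$, rule out $\bar\lambda\equiv\infty$ by integrability, then invoke a Li-type calculus lemma). However, there is a concrete error in your setup: the Kelvin-transform exponent for $u$ is wrong. You define
$$u_{y^0,\lambda}(y)=\Bigl(\tfrac{\lambda}{|y-y^0|}\Bigr)^{n+\alpha-4}u(y^{y^0,\lambda}),$$
but for the system \eqref{equ-UHST-cri} to be preserved one needs $\theta\sigma=n+\alpha-4$, which with $\theta=\tfrac{1}{p-1}=\tfrac{n+\alpha-4}{n-\alpha+2}$ gives $\sigma=n-\alpha+2$, exactly as in Lemma \ref{lm-UHST-mms-UV-cri}. (Equivalently, the exponent balance you state, $(p-1)(n+\alpha-4)=n-\alpha+2$, is the reason the Kelvin exponent of $u=f^{p-1}$ is $n-\alpha+2$, not $n+\alpha-4$; the latter would be the exponent for $f$ itself.) With your definition as written, $(u_{y^0,\lambda},v_{y^0,\lambda})$ does not solve the same system, the sign of the comparison identity fails, and the whole moving-spheres machinery collapses. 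This also explains the superfluous ``raising to the exponent $p-1$'' step at the end: if you use the correct exponent $n-\alpha+2$ from the start, the calculus lemma (Lemma 5.8 of \cite{Li}) applied to $u$ directly yields $u(y)=c_1\bigl(|y-\bar y^0|^2+d^2\bigr)^{-(n-\alpha+2)/2}$ with no further conversion needed.

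Aside from that exponent error, your outline is on target and matches the paper's argument, including the correct exponent $n-\alpha$ for $v$, the correct reflection formula for $x_n$, and the identification of the HLS bounds \eqref{IYWT-UHST-1}–\eqref{IYWT-UHST-2} as the engine for starting and continuing the spheres. Fixing the Kelvin exponent for $u$ (and dropping the final ``raising to $p-1$'' step) would bring the proposal in line with the paper's proof.
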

Proposition \eqref {prop-extr-HLST} follows from the above proposition immediately.

To use the method of moving spheres, we introduce the following notations:
for $R>0, y^0\in\partial \R^n_+$, denote
$$B_R(y^0)=\{z\in\R^n\,: \,|z-y^0|<R\}, $$
$$B_R^{n-1}(y^0)=\{z\in\partial\R^{n}_+\,: \,|z-y^0|<R\},$$
$$B_R^+(y^0)=\{z=(z_1,z_2,\cdots,z_n)\in B_R(y^0)\,: \, z_n>0\},$$
and
$$\Sigma_{y^0,R}^n=\R^n_+\setminus \overline{B_R^+(y^0)}, \Sigma_{y^0,R}^{n-1}=\partial \R^n_+\setminus \overline{B_R^{n-1}(y^0)}.$$
For $y^0\in\partial \R^n_+, \lambda>0$, we define
$$u_{y^0, \lambda} (\eta):=\big(\frac{\lambda}{|\eta-y^0|}\big)^{n-\alpha+2}u(\eta^{y^0,\lambda}),~\mbox{for}~\eta \in \partial\R^n_+,$$
$$v_{y^0, \lambda} (\xi):=\big(\frac{\lambda}{|\xi-y^0|}\big)^{n-\alpha}v(\xi^{y^0,\lambda}),~\mbox{for}~\xi\in \R^n_+,$$
where for any $z\in \overline{\R^n_+}$,
$$z^{y^0,\lambda}:=\frac{\lambda^2(z-y^0)}{|z-y^0|^2}+y^o.$$
\begin{lm}\label{lm-UHST-mms-UV-cri}
Let $(u,v)$ be a pair of positive solutions to system \eqref{equ-UHST-cri}. We have
\begin{eqnarray*}
u_{y^0, \lambda} (\eta)=\int_{\R^n_+}\frac{\xi_n v_{y^0, \lambda}^\kappa (\xi)}{|\xi-\eta|^{n-\alpha+2}}
d\xi, ~\eta\in\partial \R^n_+,\\
v_{y^0, \lambda} (\xi)=\int_{\partial \R^n_+}\frac{\xi_n u_{y^0, \lambda}^\theta (\eta)}{|\xi-\eta|^{n-\alpha+2}}
d\eta, ~\xi\in\R^n_+.
\end{eqnarray*}
Moreover,
\begin{eqnarray}\label{estequ1-UHST-mms-U-cri}
&&u_{y^0, \lambda} (\eta)-u(\eta)=\int_{\Sigma_{y^0,\lambda}^n}K(y^0,\lambda;\xi,\eta)(v_{y^0, \lambda}^\kappa (\xi)-v^\kappa (\xi))d\xi,\\\label{estequ2-UHST-mms-V-cri}
&&v_{y^0, \lambda} (\xi)-v(\xi)=\int_{\Sigma_{y^0,\lambda}^{n-1}}K(y^0,\lambda;\xi,\eta)(u_{y^0, \lambda}^\theta (\eta)-u^\theta (\eta))d\eta,
 \end{eqnarray}
where
\begin{equation*}
K(y^0,\lambda;\xi,\eta):=\frac{\xi_n}{|\xi-\eta|^{n-\alpha+2}}-\frac{\xi_n}{|\xi^{y^0,\lambda}-\eta|^{n-\alpha+2}}\cdot\big(\frac{\lambda}{|\xi-y^0|}\big)^{n-\alpha+2}>0,
\end{equation*}
for $\xi\in\Sigma_{y^0,\lambda}^n,\eta\in\Sigma_{y^0,\lambda}^{n-1},\lambda>0$.
\end{lm}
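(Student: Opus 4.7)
The lemma has three claims: (a) the Kelvin-type transforms $u_{y^0,\lambda}$ and $v_{y^0,\lambda}$ satisfy the same integral system \eqref{equ-UHST-cri}; (b) the integral identities \eqref{estequ1-UHST-mms-U-cri}--\eqref{estequ2-UHST-mms-V-cri}; and (c) the positivity of $K(y^0,\lambda;\xi,\eta)$ on the indicated region. I would treat these in order, and all three reduce to the standard change-of-variable identities for the sphere inversion $z\mapsto z^{y^0,\lambda}$ centered at the boundary point $y^0\in\partial\R^n_+$.

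For (a), the key inputs are the standard Kelvin identities: $dx=(\lambda/|\xi-y^0|)^{2n}d\xi$ in the bulk, $d\eta=(\lambda/|\zeta-y^0|)^{2(n-1)}d\zeta$ on the boundary, $(\xi^{y^0,\lambda})_n=\lambda^2\xi_n/|\xi-y^0|^2$ (which uses crucially that $y^0_n=0$), and $|\xi^{y^0,\lambda}-\eta^{y^0,\lambda}|=\lambda^2|\xi-\eta|/(|\xi-y^0||\eta-y^0|)$. Substituting $x=\xi^{y^0,\lambda}$ in the first equation of \eqref{equ-UHST-cri} evaluated at $\eta^{y^0,\lambda}$, and collecting all powers of $\lambda$ and $|\xi-y^0|$, one obtains a factor $(\lambda/|\xi-y^0|)^{(n-\alpha)\kappa}$ which must match the definition $v_{y^0,\lambda}^\kappa(\xi)=(\lambda/|\xi-y^0|)^{(n-\alpha)\kappa}v^\kappa(\xi^{y^0,\lambda})$. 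This works because $(n-\alpha)\kappa=n+\alpha$ exactly in the critical case $t=2n/(n+\alpha)$; the analogous computation for $v$ balances because $(n-\alpha+2)\theta=n+\alpha-4$ when $p=2(n-1)/(n+\alpha-4)$. Thus the two systems for $(u,v)$ and $(u_{y^0,\lambda},v_{y^0,\lambda})$ coincide.

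For (b), split $\R^n_+=B_\lambda^+(y^0)\cup\Sigma^n_{y^0,\lambda}$ in both representations $u(\eta)=\int\cdots v^\kappa$ and $u_{y^0,\lambda}(\eta)=\int\cdots v_{y^0,\lambda}^\kappa$. On the inner piece $B_\lambda^+(y^0)$, apply the substitution $x=\xi^{y^0,\lambda}$, which maps $B_\lambda^+(y^0)$ bijectively onto $\Sigma^n_{y^0,\lambda}$; the computations from (a) show that the $B_\lambda^+$-integral for $u(\eta)$ becomes $\int_{\Sigma^n_{y^0,\lambda}}\xi_n v_{y^0,\lambda}^\kappa(\xi)(\lambda/|\xi-y^0|)^{n-\alpha+2}|\xi^{y^0,\lambda}-\eta|^{-(n-\alpha+2)}d\xi$, and symmetrically for $u_{y^0,\lambda}(\eta)$ with $v$ and $v_{y^0,\lambda}$ interchanged. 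Subtracting produces the common factor $v_{y^0,\lambda}^\kappa(\xi)-v^\kappa(\xi)$ multiplied precisely by $K(y^0,\lambda;\xi,\eta)$. The identity for $v(\xi)-v_{y^0,\lambda}(\xi)$ follows by the same argument with the boundary Jacobian $(\lambda/|\zeta-y^0|)^{2(n-1)}$ in place of $(\lambda/|\xi-y^0|)^{2n}$.

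For (c), after translating so that $y^0=0$, the claim $K>0$ reduces to $|\xi|\,|\lambda^2\xi/|\xi|^2-\eta|>\lambda|\xi-\eta|$, i.e.\ $\bigl|\lambda^2\xi/|\xi|-\eta|\xi|\bigr|^2>\lambda^2|\xi-\eta|^2$. Expanding both sides gives the clean algebraic identity
\[
\bigl|\lambda^2\xi/|\xi|-\eta|\xi|\bigr|^2-\lambda^2|\xi-\eta|^2=(|\xi|^2-\lambda^2)(|\eta|^2-\lambda^2),
\]
which is strictly positive whenever $\xi\in\Sigma^n_{y^0,\lambda}$ and $\eta\in\Sigma^{n-1}_{y^0,\lambda}$. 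The only real obstacle anywhere in the argument is bookkeeping of the exponents in (a)--(b); everything else is routine. I expect to verify once at the start that the critical exponents $\kappa,\theta$ really do make the Kelvin-transformed system self-similar, and then the remainder is a direct computation.
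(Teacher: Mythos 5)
Your proof is correct and follows exactly the computation the paper has in mind (it says only "Direct computations yields the result" and defers to Lemma 3.2 of Dou--Zhu \cite{DZ-2}). You have correctly identified all four Kelvin identities, correctly verified that the critical exponents make the transform self-similar (namely $(n-\alpha)\kappa=n+\alpha$ and $(n-\alpha+2)\theta=n+\alpha-4$), the split into $B_\lambda^+(y^0)$ and $\Sigma^n_{y^0,\lambda}$ is the standard device, and your algebraic identity $\bigl|\lambda^2\xi/|\xi|-\eta|\xi|\bigr|^2-\lambda^2|\xi-\eta|^2=(|\xi|^2-\lambda^2)(|\eta|^2-\lambda^2)$ for the positivity of $K$ is correct.
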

\begin{proof}
Direct computations yields the result. See for example, proof of  Lemma 3.2 in \cite{DZ-2}.
\end{proof}

\begin{lm}\label{lm-UHST-mms-start-cri}
Let $(u,v)$ be a pair of  positive solutions to system \eqref{equ-UHST-cri}  with $u(\eta)\in L^{\theta+1}(\partial \R^n_+),v(\xi)\in L^{t^\prime}(\R^n_+)$, where $\frac{3n}{2n-1}<\alpha<n$, $p=\frac{2(n-1)}{n+\alpha-4}, t=\frac{2n}{n+\alpha}$.  Then, for any $y^0\in\partial\R^n_+$, there exists $\lambda_0(y^0)>0$ small enough such that: $\forall~ 0<\lambda<\lambda_0(y^0)$,
\begin{eqnarray*}
u_{y^0, \lambda} (\eta)&\le & u(\eta),~\forall ~\eta\in\Sigma_{y^0,\lambda}^{n-1},\\
v_{y^0, \lambda} (\xi)&\le & v (\xi),~\forall ~\xi\in\Sigma_{y^0,\lambda}^n.
\end{eqnarray*}
\end{lm}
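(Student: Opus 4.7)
The plan is to run the standard coupled bad--set argument, analogous to Lemma~\ref{lm-UHST-mmp-start}, but now exploiting the invariance of the critical $L^{t'}$ and $L^{\theta+1}$ norms under the inversion $z\mapsto z^{y^0,\lambda}$. Fix $y^0\in\partial\R^n_+$ and define the bad sets
\[
\Sigma_+^u:=\{\eta\in\Sigma_{y^0,\lambda}^{n-1} : u_{y^0,\lambda}(\eta)>u(\eta)\},\qquad
\Sigma_+^v:=\{\xi\in\Sigma_{y^0,\lambda}^{n} : v_{y^0,\lambda}(\xi)>v(\xi)\};
\]
the goal is to show both are empty for $\lambda$ small. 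From \eqref{estequ1-UHST-mms-U-cri} and the trivial bound $K(y^0,\lambda;\xi,\eta)\le \xi_n/|\xi-\eta|^{n-\alpha+2}$, for $\eta\in\Sigma_+^u$ one has
\[
0<u_{y^0,\lambda}(\eta)-u(\eta)\le \int_{\Sigma_+^v}\frac{\xi_n\bigl(v_{y^0,\lambda}^\kappa(\xi)-v^\kappa(\xi)\bigr)}{|\xi-\eta|^{n-\alpha+2}}\,d\xi,
\]
and by \eqref{estequ2-UHST-mms-V-cri} the analogous inequality holds for $v_{y^0,\lambda}-v$ on $\Sigma_+^v$.

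Applying the dual forms of the critical HLS inequality \eqref{ineq-HLST-UHS} together with H\"older's inequality and the mean value bound yields the self--estimate
\[
\|u_{y^0,\lambda}-u\|_{L^{p'}(\Sigma_+^u)}\le C\,\|v_{y^0,\lambda}\|_{L^{t'}(\Sigma_+^v)}^{\kappa-1}\,\|u_{y^0,\lambda}\|_{L^{\theta+1}(\Sigma_+^u)}^{\theta-1}\,\|u_{y^0,\lambda}-u\|_{L^{p'}(\Sigma_+^u)},
\]
and it suffices to make the prefactor strictly less than $1$. The key computation is the change of variables $\tilde z=z^{y^0,\lambda}$: because $(n-\alpha)t'=2n$ and $(n-\alpha+2)(\theta+1)=2(n-1)$, the conformal weights exactly cancel the Jacobians, giving the invariance identities
\[
\|v_{y^0,\lambda}\|_{L^{t'}(\Sigma_{y^0,\lambda}^n)}^{t'}=\int_{B_\lambda^+(y^0)}v^{t'}\,d\tilde\xi,\qquad
\|u_{y^0,\lambda}\|_{L^{\theta+1}(\Sigma_{y^0,\lambda}^{n-1})}^{\theta+1}=\int_{B_\lambda^{n-1}(y^0)}u^{\theta+1}\,d\tilde\eta,
\]
both of which vanish as $\lambda\to 0^+$ by absolute continuity. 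This forces $\|u_{y^0,\lambda}-u\|_{L^{p'}(\Sigma_+^u)}=0$, hence $\Sigma_+^u$ is empty, and symmetrically for $\Sigma_+^v$.

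The principal obstacle is the mean value step $u_{y^0,\lambda}^\theta-u^\theta\le \theta\,u_{y^0,\lambda}^{\theta-1}(u_{y^0,\lambda}-u)$, which is valid only when $\theta\ge 1$, i.e.\ $\alpha\ge 3$. Since $\kappa=(n+\alpha)/(n-\alpha)>1$ is automatic, the $v^\kappa$ side is harmless; but for $2\le\alpha<3$ we have $\theta<1$. A direct computation identifies the correct threshold as $\kappa\theta>1\iff \alpha>3n/(2n-1)$, so in this residual range one picks an intermediate $r$ with $1/\kappa<r<\theta$ and replaces the forbidden mean value step by the subadditivity inequality $(a+c)^s-(b+c)^s\le a^s-b^s$ for $0<s\le 1$, estimating $u^\theta-u_{y^0,\lambda}^\theta$ via $(u^{\theta/r}-u_{y^0,\lambda}^{\theta/r})^r$ with $\theta/r>1$, precisely as in Case~(ii) of Lemma~\ref{lm-UHST-mmp-start}. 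The resulting $L^{t'r}$ norms are still governed by the same inversion invariance, so the argument closes; the bookkeeping of exponents in this Case~(ii) step is the only delicate part.
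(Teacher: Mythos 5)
Your overall setup is right, and you correctly verify the conformal invariance identities $(n-\alpha)t'=2n$ and $(n-\alpha+2)(\theta+1)=2(n-1)$ that convert the exterior norms $\|v_{y^0,\lambda}\|_{L^{t'}(\Sigma^n_{y^0,\lambda})}$ and $\|u_{y^0,\lambda}\|_{L^{\theta+1}(\Sigma^{n-1}_{y^0,\lambda})}$ into the vanishing small-ball quantities $\|v\|_{L^{t'}(B_\lambda^+(y^0))}$ and $\|u\|_{L^{\theta+1}(B_\lambda^{n-1}(y^0))}$. The case split at $\theta\ge 1\iff\alpha\ge 3$ and the threshold computation $\kappa\theta>1\iff\alpha>\frac{3n}{2n-1}$ are also correct. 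The $\theta\ge1$ branch is sound and matches the paper's Case~(i).

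The $2\le\alpha<3$ branch contains a genuine error: you choose $r$ with $\frac1\kappa<r<\theta$, which forces $r<1$ because $\theta\le1<\kappa$. With $r<1$, your substitute inequality
$u_{y^0,\lambda}^\theta-u^\theta\le\bigl(u_{y^0,\lambda}^{\theta/r}-u^{\theta/r}\bigr)^r\le c\, u_{y^0,\lambda}^{\theta-r}\bigl(u_{y^0,\lambda}-u\bigr)^r$
feeds through HLS and H\"older to produce a \emph{sublinear} self-estimate of the form $X\le C\varepsilon\, X^{r}$ with $X=\|u_{y^0,\lambda}-u\|_{L^{p'}(\Sigma_+^u)}$ and $\varepsilon\to0$. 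For $r<1$ this only gives $X\le(C\varepsilon)^{1/(1-r)}$, i.e.\ $X$ small, not $X=0$; it cannot certify that the bad set is empty for a \emph{fixed} small $\lambda$, which is what the lemma asserts. You have transcribed $\frac1\kappa<r<\theta$ verbatim from the subcritical Lemma~\ref{lm-UHST-mmp-start} Case (ii), where the roles are $\kappa\le1<\theta$; in the critical Lemma~\ref{lm-UHST-mms-start-cri} the roles flip to $\theta\le1<\kappa$, and the correct choice is $\frac1\theta<r<\kappa$, which forces $r\ge1$. With $r\ge1$ the subadditivity $(a+c)^s-(b+c)^s\le a^s-b^s$ is applied with $s=\frac1r\le1$ at the level of the integral representation of $u^{1/r}$ (exactly as in \eqref{equ2-UHST-mmp-start-2} with $u,v$ interchanged), yielding $\|u_{y^0,\lambda}^{1/r}-u^{1/r}\|_{L^{p'r}}\le C\|v\|^{(\kappa-r)/r}_{L^{t'}(B_\lambda^+)}\|v_{y^0,\lambda}-v\|_{L^{t'}}$. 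Chaining this with \eqref{equ2-UHST-mms-start-cri-2} gives the \emph{linear} self-estimate $\|v_{y^0,\lambda}-v\|_{L^{t'}}\le C\|u\|^{\theta-1/r}\|v\|^{(\kappa-r)/r}\|v_{y^0,\lambda}-v\|_{L^{t'}}$ that closes to $\|v_{y^0,\lambda}-v\|_{L^{t'}(\Sigma^{n,v}_{y^0,\lambda})}=0$.
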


\begin{proof}
Let $$\Sigma_{y^0,\lambda}^{n,v}=\{\xi\in\Sigma_{y^0,\lambda}^n\ : \ v(\xi)<v_{y^0, \lambda} (\xi)\}, \ \mbox{and} \ \ \Sigma_{y^0,\lambda}^{n-1,u}=\{\eta\in\Sigma_{y^0,\lambda}^n\ : \ u(\eta)<u_{y^0, \lambda} (\eta)\}.$$
By Lemma \ref{lm-UHST-mms-UV-cri}, if $\eta\in \Sigma_{y^0,\lambda}^{n-1,u}$, we have
\begin{equation}\label{equ1-UHST-mms-start-cri}
0\le u_{y^0, \lambda} (\eta)-u(\eta)\le \int_{\Sigma_{y^0,\lambda}^{n,v}}\frac{\xi_n}{|\xi-\eta|^{n-\alpha+2}}(v_{y^0, \lambda}^\kappa (\xi)-v^\kappa (\xi))d\xi.
\end{equation}
Notice that $\kappa=t^\prime-1=\frac{n+\alpha}{n-\alpha}>1$.
By \eqref{equ1-UHST-mms-start-cri}, for $\eta\in \Sigma_{y^0,\lambda}^{n-1,u}$ we have
\begin{eqnarray*}
0&\le&  u_{y^0, \lambda} (\eta)-u(\eta)\\
&\le&\kappa  \int_{\Sigma_{y^0,\lambda}^{n,v}}\frac{\xi_n\Phi^{\kappa-1}_{y^0, \lambda}(v)(v_{y^0, \lambda}(\xi)-v(\xi))}{|\xi-\eta|^{n-\alpha+2}}d\xi\\
&\le&\kappa  \int_{\Sigma_{y^0,\lambda}^{n,v}}\frac{\xi_n v_{y^0, \lambda}^{\kappa-1}(\xi)(v_{y^0, \lambda}(\xi)-v(\xi))}{|\xi-\eta|^{n-\alpha+2}}d\xi,
\end{eqnarray*}
where $v(\xi)\le\Phi_{y^0, \lambda}(v)\le v_{y^0, \lambda}(\xi)$. By \eqref{ineq-HLST-UHS}, we have
\begin{eqnarray}\nonumber
 \|u_{y^0, \lambda} -u\|_{L^{p^\prime}(\Sigma_{y^0,\lambda}^{n-1,u})} 
&\le&C\|v_{y^0, \lambda}^{\kappa-1}(v_{y^0, \lambda}-v)\|_{L^t(\Sigma_{y^0,\lambda}^{n,v})}\\\nonumber
&\le&C\|v_{y^0, \lambda}\|^{\kappa-1}_{L^{t^\prime}(\Sigma_{y^o,\lambda}^{n,v})}
\|v_{y^0, \lambda}-v\|_{L^{t^\prime}(\Sigma_{y^0,\lambda}^{n,v})}\\\label{equ2-UHST-mms-start-cri}
&=&C\|v\|^{\kappa-1}_{L^{t^\prime}(B_\lambda^+(y^0))}
\|v_{y^0, \lambda}-v\|_{L^{t^\prime}(\Sigma_{y^0,\lambda}^{n,v})}.
\end{eqnarray}

Now we consider two cases.

Case (\rmnum1). $\theta>1$.

 Similar to the above, we have
\begin{eqnarray*}
\|v_{y^0, \lambda}-v\|_{L^{t^\prime}(\Sigma_{y^0,\lambda}^{n,v})}
\le C\|u\|^{\theta-1}_{L^{\theta+1}(B_\lambda^{n-1}(y^0))}\|u_{y^0, \lambda} -u\|_{L^{p^\prime}(\Sigma_{y^0,\lambda}^{n-1,u})}.
\end{eqnarray*}
Notice that  $u(\eta)\in L^{\theta+1}(\partial \R^n_+),v(\xi)\in L^{t^\prime}(\R^n_+)$. Thus  for $\lambda>0$ small enough, $\|v\|^{\kappa-1}_{L^{t^\prime}(B_\lambda^+(y^0))}, \|u\|^{\theta-1}_{L^{\theta+1}(B_\lambda^{n-1}(y^0))}$ will be close to zero, which yields  that
 $$\|u_{y^o, \lambda} -u\|_{L^{p^\prime}(\Sigma_{y^0,\lambda}^{n-1,u})}=\|v_{y^0, \lambda}-v\|_{L^{t^\prime}(\Sigma_{y^0,\lambda}^{n,v})}=0.$$
  That is: both $\Sigma_{y^0,\lambda}^{n,v}$ and $\Sigma_{y^0,\lambda}^{n-1,u}$ are empty.

Case (\rmnum2). $0<\theta\le 1$.

 Since $\frac1p+\frac1t>1,$ we have $1\le\frac1\theta<\kappa$. Thus there exists a number $r$ satisfied $\frac1\theta<r<\kappa $.
By \eqref{ineq-HLST-UHS-equv-C} and H\"{o}lder inequality, we have
\begin{eqnarray}\label{equ2-UHST-mms-start-cri-2}
\|v_{y^0, \lambda}-v\|_{L^{t^\prime}(\Sigma_{y^0,\lambda}^{n,v})}
\le C\|u\|^{\theta-\frac1r}_{L^{\theta+1}(B_\lambda^{n-1}(y^0))}\|u^\frac1r_{y^0, \lambda} -u^\frac1r\|_{L^{p^\prime}(\Sigma_{y^0,\lambda}^{n-1,u})}.
\end{eqnarray}
Similar to \eqref{equ2-UHST-mmp-start-2}, we can also conclude that
\begin{eqnarray*}
\|u^\frac1r_{y^0, \lambda} -u^\frac1r\|_{L^{p^\prime}(\Sigma_{y^0,\lambda}^{n-1,u})}
&\le&C\|v\|^{\frac{\kappa-r}r}_{L^{t^\prime}(B_\lambda^+(y^0))}
\|v_{y^0, \lambda}-v\|_{L^{t^\prime}(\Sigma_{y^0,\lambda}^{n,v})}.
\end{eqnarray*}
Combining the above with \eqref{equ2-UHST-mms-start-cri-2}, we obtain
\begin{equation*}
\|v_{y^0, \lambda}-v\|_{L^{t^\prime}(\Sigma_{y^0,\lambda}^{n,v})}\le C\|u\|^{\theta-\frac1r}_{L^{\theta+1}(B_\lambda^{n-1}(y^0))}\|v\|^{\frac{\kappa-r}r}_{L^{t^\prime}(B_\lambda^+(y^0))}
\|v_{y^0, \lambda}-v\|_{L^{t^\prime}(\Sigma_{y^0,\lambda}^{n,v})}.
\end{equation*}
Notice that  $u(\eta)\in L^{\theta+1}(\partial \R^n_+),v(\xi)\in L^{t^\prime}(\R^n_+)$. Thus  for $\lambda>0$ small enough, $\|u\|^{\theta-\frac1r}_{L^{\theta+1}(B_\lambda^{n-1}(y^0))}\|v\|^{\frac{\kappa-r}r}_{L^{t^\prime}(B_\lambda^+(y^0))}$ will be close to zero.
 Similar argument as that in Case (\rmnum1) concludes that both $\Sigma_{y^0,\lambda}^{n,v}$ and $\Sigma_{y^0,\lambda}^{n-1,u}$ are empty.
\end{proof}

Define $$\overline{\lambda}(y^0)=\sup\{\mu>0\ : \forall \lambda\in(0, \mu), \  \ u_{y^0, \lambda} (\eta)\le u(\eta),~\forall ~\eta\in\Sigma_{y^0,\lambda}^{n-1}; \ v_{y^0, \lambda} (\xi) \le  v (\xi),~\forall ~\xi\in\Sigma_{y^0,\lambda}^n\}.$$

\begin{lm}\label{lm-UHST-mms-infy-cri}
For some $y^0\in\partial R^n_+$, if $\overline{\lambda}(y^0)<\infty$, then
\begin{eqnarray*}
u_{y^0, \overline{\lambda}(y^0)} (\eta)&\equiv& u(\eta),~\forall ~\eta\in \partial \R^n_+,\\
v_{y^0, \overline{\lambda}(y^0)} (\xi)&\equiv& v (\xi),~\forall ~\xi\in \R^n_+.
\end{eqnarray*}
\end{lm}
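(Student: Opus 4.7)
The plan is to run the standard method-of-moving-spheres dichotomy at the critical parameter $\overline{\lambda}=\overline{\lambda}(y^0)$. First, by the continuity of $u$ and $v$ (the critical-case analogue of Lemma \ref{lm-UHST-regu}) and by the very definition of $\overline{\lambda}$ as a supremum, the non-strict inequalities survive at $\lambda=\overline{\lambda}$: $u_{y^0,\overline{\lambda}}(\eta)\le u(\eta)$ for all $\eta\in\Sigma_{y^0,\overline{\lambda}}^{n-1}$, and $v_{y^0,\overline{\lambda}}(\xi)\le v(\xi)$ for all $\xi\in\Sigma_{y^0,\overline{\lambda}}^n$.

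Next, I apply the integral identities \eqref{estequ1-UHST-mms-U-cri}--\eqref{estequ2-UHST-mms-V-cri} with $\lambda=\overline{\lambda}$. Because $K(y^0,\overline{\lambda};\xi,\eta)>0$ throughout the domain of integration and the integrands $v_{y^0,\overline{\lambda}}^\kappa-v^\kappa$, $u_{y^0,\overline{\lambda}}^\theta-u^\theta$ have a definite sign by the previous step, a clean alternative holds. If $u_{y^0,\overline{\lambda}}(\eta_0)=u(\eta_0)$ at some single $\eta_0\in\Sigma_{y^0,\overline{\lambda}}^{n-1}$, the first identity forces $v_{y^0,\overline{\lambda}}=v$ almost everywhere on $\Sigma_{y^0,\overline{\lambda}}^n$, and continuity upgrades this to equality everywhere; the second identity then gives $u_{y^0,\overline{\lambda}}=u$ on all of $\Sigma_{y^0,\overline{\lambda}}^{n-1}$. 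Because the Kelvin transform $z\mapsto z^{y^0,\overline{\lambda}}$ is an involution that exchanges $\Sigma_{y^0,\overline{\lambda}}^n$ with $B_{\overline{\lambda}}^+(y^0)\setminus\{y^0\}$ (and the boundary counterpart), these identities automatically extend to global identities on $\partial\R^n_+$ and $\R^n_+$, giving the conclusion. Otherwise the inequalities are strict everywhere: $u_{y^0,\overline{\lambda}}<u$ on $\Sigma_{y^0,\overline{\lambda}}^{n-1}$ and $v_{y^0,\overline{\lambda}}<v$ on $\Sigma_{y^0,\overline{\lambda}}^n$.

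The remaining task, and the main obstacle, is to rule out this strict-inequality alternative by contradicting the maximality of $\overline{\lambda}$; the pattern mirrors Lemma \ref{lm-UHST-mmp-infy}. Using strictness plus continuity, I pick a large $R>0$, a small $\delta>0$, and $C_0>0$ such that $u-u_{y^0,\overline{\lambda}}\ge C_0$ on the compact collar $B_R^{n-1}(y^0)\setminus B_{\overline{\lambda}+\delta}^{n-1}(y^0)$ and similarly for $v-v_{y^0,\overline{\lambda}}$. Continuity of $\lambda\mapsto u_{y^0,\lambda},\,v_{y^0,\lambda}$ then furnishes some $\epsilon_1>0$ so that for $\lambda\in(\overline{\lambda},\overline{\lambda}+\epsilon_1)$ the inequalities $u_{y^0,\lambda}\le u$, $v_{y^0,\lambda}\le v$ persist on those collars. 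Consequently the violation sets $\Sigma_{y^0,\lambda}^{n-1,u}$ and $\Sigma_{y^0,\lambda}^{n,v}$ are confined to $\{|\cdot-y^0|\ge R\}$ together with the thin annular tube $\{\lambda\le|\cdot-y^0|<\overline{\lambda}+\delta\}$. Over these confined regions the $L^{\theta+1}$-norm of $u$ and the $L^{t'}$-norm of $v$ can be made arbitrarily small by first fixing $R$ large, then taking $\delta$ and $\epsilon_1$ small. This is precisely the smallness hypothesis required to rerun the HLS-with-small-norms estimates of Lemma \ref{lm-UHST-mms-start-cri}, which collapse $\|u_{y^0,\lambda}-u\|_{L^{p'}(\Sigma_{y^0,\lambda}^{n-1,u})}$ and $\|v_{y^0,\lambda}-v\|_{L^{t'}(\Sigma_{y^0,\lambda}^{n,v})}$ to zero, contradicting the definition of $\overline{\lambda}$.

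One technical point requiring care is the sub-case $0<\theta\le 1$: as in Case (ii) of Lemma \ref{lm-UHST-mms-start-cri}, the linearization of $u^\theta$ must pass through an auxiliary exponent $r\in(1/\theta,\kappa)$, and I must verify that the smallness mechanism still drives the resulting two-sided inequality to zero. The same $r$-trick used there transfers verbatim, so this is the only step beyond bookkeeping; once it is in place the contradiction closes and $\overline{\lambda}=\overline{\lambda}(y^0)$ must be an equality point, as claimed.
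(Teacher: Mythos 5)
Your dichotomy, the passage to equality via \eqref{estequ1-UHST-mms-U-cri}--\eqref{estequ2-UHST-mms-V-cri}, and the remark that the Kelvin involution upgrades equality on $\Sigma_{y^0,\overline{\lambda}}^n$ to a global identity are all right, and your strategy for the contradiction --- confine the violation sets $\Sigma_{y^0,\lambda}^{n-1,u},\Sigma_{y^0,\lambda}^{n,v}$ to a far exterior plus a thin annular collar and rerun the contraction of Lemma \ref{lm-UHST-mms-start-cri} --- is exactly the paper's. What is wrong is the set over which you invoke smallness, and this is a genuine gap, not bookkeeping.

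In the chain of estimates in Lemma \ref{lm-UHST-mms-start-cri} the factor to be driven to zero is $\|v_{y^0,\lambda}\|^{\kappa-1}_{L^{t'}(\Sigma_{y^0,\lambda}^{n,v})}$, not $\|v\|$ over that set; by conformal invariance $\|v_{y^0,\lambda}\|_{L^{t'}(E)}=\|v\|_{L^{t'}(E^*)}$, where $E^*$ is the Kelvin reflection of $E$ across $\partial B_\lambda(y^0)$. So once you confine $\Sigma_{y^0,\lambda}^{n,v}\subset\Omega_{\lambda,R}^n$, the quantity that must be small is $\|v\|_{L^{t'}((\Omega_{\lambda,R}^n)^*)}$, i.e.\ the norm of $v$ over the \emph{reflected} collar --- the tiny ball $B^+_{\lambda^2/R}(y^0)$ together with the thin inner annulus $B^+_\lambda(y^0)\setminus B^+_{\lambda^2/(\overline\lambda+\delta)}(y^0)$ --- not $\|v\|_{L^{t'}(\Omega_{\lambda,R}^n)}$ over the collar itself, which is what you wrote. (Same for $u$ on the boundary.) Both quantities do become small with your choices of $R,\delta,\epsilon$, but only the reflected one enters the contraction: smallness of $\|v\|_{L^{t'}(\Omega_{\lambda,R}^n)}$ gives no bound on $\|v_{y^0,\lambda}\|_{L^{t'}(\Sigma_{y^0,\lambda}^{n,v})}$ without the Kelvin identity. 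The paper makes this explicit by introducing the reflected sets $(\Omega_{\lambda,R}^{n-1})^*,(\Omega_{\lambda,R}^n)^*$ and computing them; your write-up needs exactly that extra application of conformal invariance of the $L^{t'}$ (resp.\ $L^{\theta+1}$) norm before the contraction closes.
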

\begin{proof}
For simplicity, we denote $\overline{\lambda}:=\overline{\lambda}(y^0)$. It is enough to prove that
\begin{eqnarray*}
u_{y^0, \overline{\lambda}} (\eta)&\equiv& u(\eta),~\forall ~\eta\in\Sigma_{y^0,\overline{\lambda}}^{n-1},\\
v_{y^0, \overline{\lambda}} (\xi)&\equiv& v (\xi),~\forall ~\xi\in\Sigma_{y^0,\overline{\lambda}}^n.
\end{eqnarray*}

Notice that
\begin{eqnarray*}
u_{y^0, \overline{\lambda}} (\eta)&\le & u(\eta),~\forall ~\eta\in\Sigma_{y^0,\overline{\lambda}}^{n-1},\\
v_{y^0, \overline{\lambda}} (\xi)&\le & v (\xi),~\forall ~\xi\in\Sigma_{y^0,\overline{\lambda}}^n.
\end{eqnarray*}
Then if $u_{y^0, \overline{\lambda}} (\eta)\not\equiv u(\eta)$ or $v_{y^0, \overline{\lambda}} (\xi)\not\equiv v (\xi)$, by Lemma \ref{lm-UHST-mms-UV-cri} we have
\begin{eqnarray*}
u_{y^0, \overline{\lambda}} (\eta)&< & u(\eta),~\forall ~\eta\in\Sigma_{y^0,\overline{\lambda}}^{n-1},\\
v_{y^0, \overline{\lambda}} (\xi)&< & v (\xi),~\forall ~\xi\in\Sigma_{y^0,\overline{\lambda}}^n.
\end{eqnarray*}
Notice that $u,v$ are continuous. Then for $R>0$ large we can take $\delta>0$ small such that
\begin{eqnarray*}
&&u(\eta)-u_{y^0, \overline{\lambda}} (\eta)\ge C_0,~\forall ~\eta\in\Sigma_{y^0,\overline{\lambda}+\delta}^{n-1}\cap B^{n-1}_R(y^0),\\
&&v(\xi)-v_{y^0, \overline{\lambda}} (\xi)\ge C_0,~\forall ~\xi\in\Sigma_{y^0,\overline{\lambda}+\delta}^n\cap B^+_R(y^0)
\end{eqnarray*}
for some $C_0>0$ small. Then we can choose $0<\epsilon<\delta$ small such that for  $\lambda\in(\overline{\lambda},\overline{\lambda}+\epsilon)$,
\begin{eqnarray*}
&&u(\eta)-u_{y^0, \lambda} (\eta)\ge 0,~\forall ~\eta\in\Sigma_{y^0,\overline{\lambda}+\delta}^{n-1}\cap B^{n-1}_R(y^0),\\
&&v(\xi)-v_{y^0, \lambda} (\xi)\ge 0,~\forall ~\xi\in\Sigma_{y^0,\overline{\lambda}+\delta}^n\cap B^+_R(y^0).
\end{eqnarray*}
And so
 $$\Sigma_{y^0,\lambda}^{n-1,u}\subset (\partial \R^n_+ \setminus B^{n-1}_R(y^0))\cup(\Sigma_{y^0,\lambda}^{n-1}\setminus \Sigma_{y^0,\overline{\lambda}+\delta}^{n-1}):=\Omega_{\lambda,R}^{n-1}$$
 and
 $$\Sigma_{y^0,\lambda}^{n,v}\subset (\R^n_+ \setminus B^+_R(y^0)) \cup (\Sigma_{y^0,\lambda}^n\setminus \Sigma_{y^0,\overline{\lambda}+\delta}^n):=\Omega_{\lambda,R}^n.$$
Denote $(\Omega_{\lambda,R}^{n-1})^*, (\Omega_{\lambda,R}^n)^*$ as the reflection of $\Omega_{\lambda,R}^{n-1}$ and $\Omega_{\lambda,R}^n$ respectively under the Kelvin transformation with respect to $\partial B_R(y^0)$. Then we have
$$(\Omega_{\lambda,R}^{n-1})^*=B^{n-1}_{\epsilon_1}(y^0)\cup(B^{n-1}_\lambda(y^0)\setminus B^{n-1}_{\lambda^2\setminus(\overline{\lambda}+\delta)}(y^0))$$
 and
 $$(\Omega_{\lambda,R}^n)^*=B^+_{\epsilon_1}(y^0)\cup(B^+_\lambda(y^0)\setminus B^+_{\lambda^2\setminus(\overline{\lambda}+\delta)}(y^0)),$$
where $\epsilon_1:=\frac{\lambda^2}{R}\to 0$ as $R\to\infty$.

Therefore we can first choose $R>0$ large, then $\delta>0$ small, then $0<\epsilon<\delta$ small such that for $\lambda\in(\overline{\lambda},\overline{\lambda}+\epsilon)$, both $\|v\|_{L^{t^\prime}(\Omega_{\lambda,R}^n)^*}$ and $\|u\|_{L^{\theta+1}(\Omega_{\lambda,R}^{n-1})^*}$ are small enough. Then the same argument as Lemma \ref{lm-UHST-mms-start-cri} yields that $\Sigma_{y^0,\lambda}^{n,v}$ and $\Sigma_{y^0,\lambda}^{n-1,u}$ are empty, which contradicts with the definiton of $\overline{\lambda}$.
\end{proof}

{\bf Proof of Proposition \ref{prop-extr-sys-HLST}.}
If  $\overline{\lambda}(y)=\infty$ for any $y\in\partial \R^n_+$, then for any $\lambda>0$,
$$u_{y, \lambda}(\eta)\le u(\eta),~\forall \eta\in\Sigma_{y,\lambda}^{n-1}.$$
Lemma 5.7 in \cite{Li} implies that $u \equiv C_0>0$ is a constant, which contradicts with $u(\eta)\in L^{\theta+1}(\partial \R^n_+)$.
Thus there exists $y^0\in\partial \R^n_+$ such that $\overline{\lambda}(y^0)<\infty$. Now we prove that for any $y\in\partial \R^n_+$ it holds $\overline{\lambda}(y)<\infty$.

In fact, for any fixed $y\in\partial \R^n_+$, we know that for any $\lambda\in(0,\overline{\lambda}(y))$,
$u_{y, \lambda}(\eta)\le u(\eta),~\forall \eta\in\Sigma_{y,\lambda}^{n-1}$, which implies that for any
$\lambda\in(0,\overline{\lambda}(y))$,
\begin{equation}\label{equ1-prop-extr-sys-HLST}
a:=\liminf\limits_{|\eta|\to\infty}(|\eta|^{n-\alpha+2}u(\eta))\ge\liminf\limits_{|\eta|\to\infty}(|\eta|^{n-\alpha+2}u_{y, \lambda}(\eta))=|\lambda|^{n-\alpha+2}u(y).
\end{equation}
On the other hand, since $\overline{\lambda}(y^0)<\infty$, by using Lemma \ref{lm-UHST-mms-infy-cri},
\begin{equation}\label{equ2-prop-extr-sys-HLST}
a=\liminf\limits_{|\eta|\to\infty}(|\eta|^{n-\alpha+2}u(\eta))=\liminf\limits_{|\eta|\to\infty}(|\eta|^{n-\alpha+2}u_{y^0, \overline{\lambda}(y^0)}(\eta))=|\overline{\lambda}(y^0)|^{n-\alpha+2}u(y^0)<\infty.
\end{equation}
By using \eqref{equ1-prop-extr-sys-HLST} and \eqref{equ2-prop-extr-sys-HLST} we conclude that for any $y\in\partial \R^n_+$ it holds $\overline{\lambda}(y)<\infty$.
It then follows from  Lemma \ref{lm-UHST-mms-infy-cri}  that $$u_{y, \overline{\lambda}(y)}(\eta)\equiv u(\eta),~\forall y, \eta\in \partial \R^n_+.$$
 Lemma 5.8 in \cite{Li} yields that for any $\eta\in \partial \R^n_+$
$$u(\eta)=c_1\big(\frac{1}{|\eta-\eta^0|^2+d^2}\big)^{\frac{n-\alpha+2}{2}}$$
for some $c_1,d>0,\eta^0\in\partial \R^n_+.$
Similarly, we have for any $\eta\in \partial \R^n_+$
$$v(\eta,0)=c_2\big(\frac{1}{|\eta-\eta^0|^2+d^2}\big)^{\frac{n-\alpha}{2}}$$
for some $c_2,d>0,\eta^0\in\partial \R^n_+.$
\hfill$\Box$



\end{document}